\newtheorem{theorem}{Theorem} [section]
\newtheorem{proposition}[theorem]{Proposition}
\newtheorem{corollary}[theorem]{Corollary} 
\newtheorem{lemma}[theorem]{Lemma}
\newtheorem{conjecture}[theorem]{Conjecture}
\newcounter{defno}
\newcommand{\definition}[1]{\addtocounter{defno}{1}\textbf{Definition \thedefno.}#1}
\long\def\void#1{}
\begin{document}
International Journal of  Computer Discovered Mathematics (IJCDM) \\
ISSN 2367-7775 \copyright IJCDM \\
Volume 5, 2020, pp. 13--41  \\
Received 6 August 2020. Published on-line 30 September 2020 \\ 
web: \url{http://www.journal-1.eu/} \\
\copyright The Author(s) This article is published 
with open access\footnote{This article is distributed under the terms of the Creative Commons Attribution License which permits any use, distribution, and reproduction in any medium, provided the original author(s) and the source are credited.}. \\
\bigskip
\bigskip

\begin{center}
	{\Large \textbf{Arrangement of Central Points\\on the Faces of a Tetrahedron}} \\
	\medskip
	\bigskip

	\textsc{Stanley Rabinowitz} \\

	545 Elm St Unit 1,  Milford, New Hampshire 03055, USA \\
	e-mail: \href{mailto:stan.rabinowitz@comcast.net}{stan.rabinowitz@comcast.net} \\
	web: \url{http://www.StanleyRabinowitz.com/} \\

\end{center}
\bigskip

\textbf{Abstract.} We systematically investigate properties of various triangle centers
(such as orthocenter or incenter)
located on the four faces of a tetrahedron.
For each of six types of tetrahedra, we examine over 100 centers
located on the four faces of the tetrahedron.
Using a computer, we determine when any of 16 conditions occur (such as the four
centers being coplanar).
A typical result is:
The lines from each vertex of a circumscriptible tetrahedron
to the Gergonne points of the opposite face are concurrent.

\medskip
\textbf{Keywords.} triangle centers, tetrahedra, computer-discovered mathematics, Euclidean geometry.

\medskip
\textbf{Mathematics Subject Classification (2020).} 51M04, 51-08.


\def\T{^{\rm T}}


\bigskip
\bigskip
%
\section{Introduction}
\label{section:introduction}

Over the centuries, many notable points have been found
that are associated with an arbitrary triangle. Familiar examples include:
the centroid, the circumcenter, the incenter, and the orthocenter.
Of particular interest are those points that
Clark Kimberling classifies as ``triangle centers''.
He notes over 100 such points in his seminal paper \cite{KimberlingA}.

Given an arbitrary tetrahedron and a choice of triangle center
(for example, the circumcenter), we may locate this triangle center
in each face of the tetrahedron.  We wind up with four points,
one on each face.  What can be said about these points? For example,
do the 4 points form a tetrahedron similar to the original one?
Could these 4 points ever lie in a plane?  Might they form
a regular tetrahedron? Consider the 4 lines from the
vertices of the tetrahedron to the centers in the opposite faces.
Do these 4 lines concur? Might they have the same length?

In this paper, we investigate such questions for a large
collection of triangle centers
and for various types of tetrahedra.

A typical result is:
The lines from each vertex of a circumscriptible tetrahedron
to the Gergonne points of the opposite face are concurrent.

For information about what you need to know about triangle centers and center functions,
we give a short overview in Appendix \ref{section:trilinearCoordinates}.

We make extensive use of areal coordinates (also known as barycentric coordinates) when
analyzing points associated with triangles, such as the faces of a tetrahedron.
For the reader not familiar with areal coordinates, we give the basics in Appendix \ref{section:arealCoordinates}.

For points, lines, and planes in space, we make heavy use of tetrahedral coordinates.
For the reader not familiar with tetrahedral coordinates, we present the needed information
in Appendix \ref{section:tetrahedralCoordinates}.

Throughout this paper, the notation $[XYZ]$ will denote the area of triangle XYZ.

\section{Coordinates for the Face Centers}
\label{section:faceCenters}

When referring to an arbitrary tetrahedron (the reference tetrahedron), we will usually label
the vertices $A_1$, $A_2$, $A_3$, and $A_4$. The lengths of the
sides of the base ($\triangle A_1A_2A_3$) will be $a_1$, $a_2$, and $a_3$,
with edge $a_i$ opposite vertex $A_i$.  In the tetrahedron, the edge
opposite the edge of length $a_i$ will have length $b_i$. See Figure \ref{fig:isoscelesTetrahedron}a.

\begin{figure}[h!t]
\centering
\includegraphics[width=0.67\linewidth]{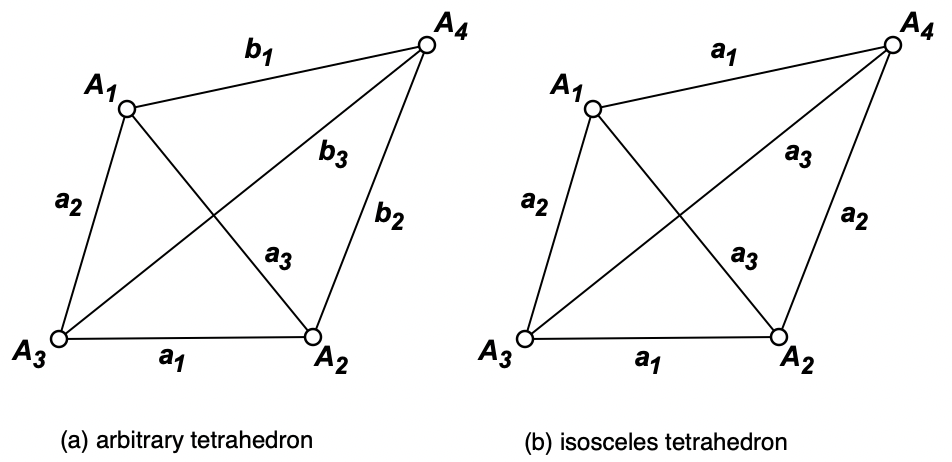}
\caption{edge labeling}
\label{fig:isoscelesTetrahedron}
\end{figure}

Thus, we have
$$A_2A_3=a_1,\quad A_3A_1=a_2,\quad A_1A_2=a_3,\quad
A_1A_4=b_1,\quad A_2A_4=b_2,\quad A_3A_4=b_3.$$

If the tetrahedron has its opposite edges of equal length, then
the tetrahedron is called an isosceles tetrahedron. See Figure \ref{fig:isoscelesTetrahedron}b
(not to scale). It is clear that in an isosceles tetrahedron,
the four faces are congruent because they each have sides
of length $a_1$, $a_2$, $a_3$. In a sense, the isosceles
tetrahedon ``looks the same'' from each vertex. This four-fold
symmetry makes the isosceles tetrahedron be the figure in space
that corresponds to the equilateral triangle in the plane.
An equilateral triangle has 3 identical sides and an isosceles
tetrahedron has 4 identical faces.
The face of the tetrahedron opposite vertex $A_i$ will be called face $i$
of the tetrahedron. Its area will be denoted by $F_i$.

If $(x_1,x_2,x_3)$ are the areal coordinates for a triangle center, then
the tetrahedral coordinates for the corresponding center on face 4
of our reference tetrahedron $A_1A_2A_3A_4$ is $(x_1,x_2,x_3,0)$.
To see why this is true, consider the center
$P$ on face 4 (triangle $A_1A_2A_3$) of the tetrahedron. Then
$$[PA_1A_2A_4]:[PA_2A_3A_4]:[PA_3A_1A_4]=[PA_1A_2]:[PA_2A_3]:[PA_3A_1]$$
since these 4 tetrahedra have a common altitude from $A_4$.

We will frequently
have occasion to pick a point on each face of the tetrahedron. In such
a case, the point on face $i$ will be labelled $P_i$.
It is often necessary to locate such a point based on its areal
coordinates in face $i$.  We must be careful how we set up the
coordinate system on each face.
Note that in an arbitrary tetrahedron, each face has the property
that the labels associated with each edge ($a_i$ or $b_i$) contains
one label with subscript 1, one with subscript 2, and one with
subscript 3.
In order to maintain the 4-fold
symmetry exhibited by an isosceles tetrahedron, under the mapping
$A_1\to A_2\to A_3\to A_4$
we want the faces to transform as follows:
$$\triangle A_4A_3A_2\to\triangle A_3A_4A_1\to
\triangle A_2A_1A_4\to\triangle A_1A_2A_3.$$
Note that in each face, our labelling starts with the vertex opposite
the edge whose label has subscript 1, then proceeds to the vertex
opposite the edge whose label has subscript 2 and finally ends with
the edge whose label has subscript 3. This induces the following
correspondence between the edges:
$$
\begin{aligned}
(a_1,b_2,b_3,b_1,a_2,a_3)&\to
(b_1,a_2,b_3,a_1,b_2,a_3)\\
&\to
(b_1,b_2,a_3,a_1,a_2,b_3)\\
&\to
(a_1,a_2,a_3,b_1,b_2,b_3).
\end{aligned}
$$

This mapping is shown in figure \ref{fig:mapping}.
\begin{figure}[h!t]
\centering
\includegraphics[width=1\linewidth]{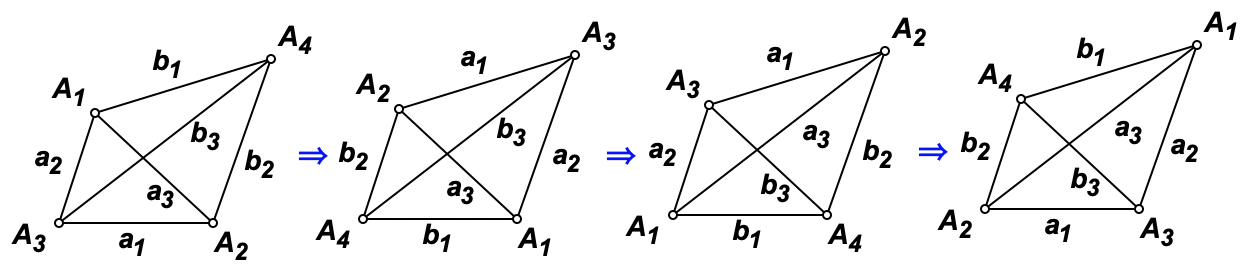}
\caption{mapping}
\label{fig:mapping}
\end{figure}

Let us now consider the mapping which takes $A_1$ into $A_2$
in this 4-fold symmetry.  We start, by finding the tetrahedral coordinates
for the point, $P_1$,  with areal coordinates
$(x_1,x_2,x_3)$ in face 1 of the reference tetrahedron.
The first coordinate ``$x_1$'' refers to the area formed by the point $P_1$
and the side of the triangle with a ``1'' as subscript. In this case,
face 1 has sides of length $a_1$, $b_2$, and $b_3$, so the side we need
is the side of length $a_1$. On face 1, this side is opposite vertex $A_4$
of the reference tetrahedron and so the ``$x_1$'' coordinate will appear as the
4th tetrahedral coordinate. Proceeding in this manner, we find that $P_1$
has tetrahedral coordinates $(0,x_3,x_2,x_1)$.
This point wants to map to a point, $P_2$, with the same areal
coordinates in face 2. In face 1 ($\triangle A_4A_3A_2$), the
coordinates correspond to areas associated with edges $a_1$, $b_2$,
and $b_3$. In face 2 ($\triangle A_3A_4A_1$), the corresponding
edges are $b_1$, $a_2$, and $b_3$. Point $P_2$ has tetrahedral coordinates
$(x_3,0,x_1,x_2)$ because on face 2, edge $b_1$ is opposite vertex $A_3$
(so $x_1$ moves to the 3rd coordinate in the tetrahedral system),
$a_2$ is opposite vertex $A_4$
(so $x_2$ moves to the 4th coordinate in the tetrahedral system),
and $b_3$ is opposite vertex $A_1$
(so $x_3$ moves to the 1st coordinate in the tetrahedral system).

In other words, given a point in the plane with areal coordinates
$(x_1,x_2,x_3)$, the corresponding points in the faces
of our reference tetrahedron are:\par
\vbox{
$${\rm Face\ 1:}\qquad (0,x_3,x_2,x_1)$$
$${\rm Face\ 2:}\qquad (x_3,0,x_1,x_2)$$
$${\rm Face\ 3:}\qquad (x_2,x_1,0,x_3)$$
$${\rm Face\ 4:}\qquad (x_1,x_2,x_3,0)$$
where we have associated face 4 with the original plane triangle.
}

If the original point is a center, with areal coordinates
$$\left(f(a_1,a_2,a_3),f(a_2,a_3,a_1),f(a_3,a_1,a_2)\right),$$
then the corresponding points on the faces of the tetrahedron are:
$$
\begin{aligned}
{\rm Face\ 1:}\qquad &(0,f(b_3,b_2,a_1),f(b_2,a_1,b_3),f(a_1,b_3,b_2))\\
{\rm Face\ 2:}\qquad &(f(b_3,b_1,a_2),0,f(b_1,a_2,b_3),f(a_2,b_3,b_1))\\
{\rm Face\ 3:}\qquad &(f(b_2,b_1,a_3),f(b_1,a_3,b_2),0,f(a_3,b_2,b_1))\\
{\rm Face\ 4:}\qquad &(f(a_1,a_2,a_3),f(a_2,a_3,a_1),f(a_3,a_1,a_2),0).
\end{aligned}
$$

Kimberling \cite{KimberlingB} and \cite{KimberlingC} has collected the trilinear coordinates for over 40,000
centers associated with a triangle.  He lists the trilinear coordinates
in terms of the sides $a$, $b$, $c$ of the reference triangle and
trigonometric functions of $A$, $B$, $C$, the angles of the reference
triangle.  Only the first coordinate is given, for if this coordinate
is $f(a,b,c,A,B,C)$, then the other coordinates are $f(b,c,a,B,C,A)$
and $f(c,a,b,C,A,B)$ respectively.

We wish to study points associated with a tetrahedron based on the
lengths of the 6 edges of the tetrahedron. The six edge lengths are
independent quantities. Involving other quantities such as the face
areas or trigonometric functions of the face or dihedral angles
would yield expressions containing dependent variables and would
complicate the process of determining if such expressions are identically
0 for all tetrahedra.  We thus need to remove the presence of angles
from Kimberling's data.  Since all the trigonometric functions present
can be expressed in terms of sine's and cosine's of the angles
of the reference triangle, the following formulas suffice to
remove all reference to these angles:
$$
\begin{aligned}
\sin A&=\frac{2K}{bc}\\
\cos A&=\frac{b^2+c^2-a^2}{2bc}\
\end{aligned}
$$
where $K$ denotes the area of the reference triangle.
The first formula comes from the well-known area formula: $K=\frac12 bc\sin A$;
and the second formula is The Law of Cosines.
Similar expressions hold for angles $B$ and $C$.

Factors (such as $K$ or $a+b+c$) that would be common to all three
coordinates are then removed.

The presence of a $K$ in the denominator of
any fraction involved is cumbersome and was removed by replacing terms of the form
$x/(y+z K)$ by $x(y-z K)/(y^2-z^2K^2)$.
This leaves all square roots in the numerators.

The variable $K$ is then replaced by its equivalent expression
in terms of the sides of the triangle (Heron's Formula), namely
$$K=\frac14\sqrt{2a^2b^2+2b^2c^2+2c^2a^2-a^4-b^4-c^4}.$$


Since tetrahedral coordinates are barycentric, if $(x,y,z,w)$
are the coordinates for one of the above centers in our reference
tetrahedron, then the tetrahedral coordinates for the corresponding
center in a tetrahedron with vertices $P_1$, $P_2$, $P_3$, and $P_4$
are $xP_1+yP_2+zP_3+wP_4$, where $xP_1$ denotes the scalar product
of $x$ and the vector $P_1$, etc. Algorithmically, the desired
coordinates are the dot product of the vectors $(x,y,z,w)$ and
$(P_1,P_2,P_3,P_4)$.

\section{Types of Tetrahedra}
\label{section:typesOfTetahedra}

The types of tetrahedra investigated are listed in the following table.
\smallskip

\begin{center}
\footnotesize
\begin{tabular}{|l|l|l|}\hline
\multicolumn{3}{|c|}{\small\textbf{Types of Tetrahedra Considered}}\\ \hline
Tetrahedron Type&Geometric Definition&Algebraic Condition $i=1,2,3$\\ \hline
General&no restrictions placed on the edges&none\\ \hline
Isosceles&faces are congruent&$a_i=b_i$\\ \hline
Circumscriptible&edges are tangent to a sphere&$a_i+b_i=$ constant\\ \hline
Isodynamic&symmedians are concurrent&$a_ib_i=$ constant\\ \hline
Orthocentric&opposite edges are perpendicular&$a_i^2+b_i^2=$ constant\\ \hline
Harmonic&n/a&$1/a_i+1/b_i=$ constant\\ \hline
\end{tabular}
\end{center}

\smallskip
Only tetrahedra that have the requisite 4-fold symmetry were studied.
Thus, for example, trirectangular tetrahedra are not included in this study.
We would have liked to have investigated isogonic tetrahedra (ones
in which the cevians to the points of tangency of the insphere are concurrent,
(\cite[p.~328]{Altshiller}),
but the corresponding algebraic condition was too messy to be manageable.
The concept of a harmonic tetrahedron was invented for this study
and has a few interesting properties, but perhaps not enough to warrant
future study. The other types of tetrahedra are well known and
information about them can be found in \cite{Altshiller}.

\section{Methodology}
\label{section:methodology}

For this study, we considered the first 101 triangle centers listed in \cite{KimberlingB},
X1 through X101, as well as a few other centers, listed in the following table.

\begin{center}
\begin{tabular}{|l|l|}\hline
\multicolumn{2}{|c|}{\textbf{Triangle Centers Considered}}\\ \hline
Triangle Center&Trilinears\\
X1--X101&see \cite{KimberlingB}\\
Y1&$1/(a^2(b+c)-a b c)$\\
Y2&$a/((b-a)(c-a))$\\
Y3&$a^2(b+c)$\\
Y4&$1/(a^2(b+c))$\\
Y5&$a/(b^2+c^2)$\\
Y6&$a^2(b^2+c^2)$\\
Y7&$1/(a^2(b^2+c^2))$\\
Y8&$(b^2+c^2)/a$\\
Y9&$a(b+c-2a)$\\
Y10&$1/(a(b+c-2a))$\\
Y11&$a/(b+c-2a)$\\
Y12&$a^2(b+c-2a)$\\
Y13&$1/(a^2(b+c-2a))$\\
Y14&$b+c-b c/a$\\ \hline
r-power point&$a^r$\\
Z1&$a^r(b+c)$\\
Z2&$a^r(b^2+c^2)$\\
Z3&$a^r(b+c-a)$\\
Z4&$a^r(b+c-2a)$\\
Z5&$a^r(b^2+c^2-a^2)$\\
Z6&$a^r(b^3+c^3)$\\
Z7&$a^r(b^2+c^2+b c)$\\
Z8&$2a^r+b^r+c^r$\\
Z9&$(b^r+c^r)/a$\\
Z10&$(b^r+c^r-a^r)/a$\\
Z11&$(b^r+c^r+2a^r)/a$\\ \hline
power center&$a^r g[b,c]$\\
arbitrary center&$f[a,g[b,c]]$\\
areal center&$f[a,b,c]/a$\\ \hline
\end{tabular}
\end{center}

For each type of tetrahedron considered,
and for each triangle center considered,
we computed the tetrahedral coordinates of these centers on each face of
the tetrahedron.

Once we had located these four centers, we then used Mathematica to
run a barrage of tests on these four points to see if they satisfied any
special properties. Since these tests involved algebraic coordinates
(i.e. we were not looking at specific tetrahedra with numerical sides),
any results found constitute a proof that the result is true and
not merely a conjecture based on numerical evidence. These results are stated
in sections \ref{section:tetrahedra} through \ref{section:harmonic}. The proofs are by coordinate geometry, mechanically
performed by the Mathematica program which was written to compute
all the necessary lengths and coordinates and then confirm the claimed
results symbolically.

Most of these results are new, however, some of them may have previously appeared
in the literature. We give references, when known. A few related results appeared as problem 3 in the 15th Summer Conference of the International Mathematical Tournament of Towns, \cite{Towns}.

\medskip
First a few definitions.

\definition{
The original tetrahedron is known as the {\it reference tetrahedron}.
}

\definition{
The tetrahedron formed by the four centers is called the {\it central tetrahedron}.
}

\definition{
The line segment from a vertex of the reference tetrahedron to the
center on the opposite face is called a {\it cevian}.
}

\definition{
Four skew lines in space
are said to form a {\it hyperbolic group} if there is an infinite number of lines
that meet all four of these lines.
}

According to Altshiller-Court (\cite[p.~10]{Altshiller}),
``Such a group is often the space analog of three concurrent lines in the plane.''

\definition{
The four skew lines are part of an infinite family of lines that form a
ruled surface known as a {\it hyperboloid} of one sheet.
}

\definition{
By a {\it space center} of a tetrahedron, we mean
one of: centroid, circumcenter, incenter, Monge point, or Euler point.
These are described in the following table.
}

\begin{center}
\begin{tabular}{|l|l|}\hline
\multicolumn{2}{|c|}{\textbf{Tetrahedron Centers Considered}}\\ \hline
Space Center&Description\\ \hline
centroid&intersection point of medians\\
circumcenter&center of circumscribed sphere\\
incenter&center of inscribed sphere\\
Monge point&symmetric of circumcenter with respect to centroid\\
Euler point&center of 12-point sphere\\ \hline
\end{tabular}
\end{center}

More background information about these centers is given in Appendix \ref{section:tetrahedronCenters}.

\smallskip
The properties that were checked for are listed in the table below.
\smallskip

\begin{center}
\begin{tabular}{|l|l|}\hline
\multicolumn{2}{|c|}{\textbf{Properties Considered}}\\ \hline
Property \hphantom{0}1&The cevians to the four centers are concurrent.\\
Property \hphantom{0}2&The cevians to the four centers form a hyperbolic group.\\
Property \hphantom{0}3&The four centers are coplanar.\\
Property \hphantom{0}4&The four centers are collinear.\\
Property \hphantom{0}5&The normals to the faces at the centers concur.\\
Property \hphantom{0}6&The faces of the central tetrahedron are parallel to the faces\\
&\qquad of the reference tetrahedron.\\
Property \hphantom{0}7&The central tetrahedron is isosceles.\\
Property \hphantom{0}8&The central tetrahedron is regular.\\
Property \hphantom{0}9&The central tetrahedron is isodynamic.\\
Property 10&The central tetrahedron is circumscriptible.\\
Property 11&The central tetrahedron is orthocentric.\\
Property 12&The central tetrahedron is similar to the reference tetrahedron.\\
Property 13&The cevians to the four centers have the same length.\\
Property 14&The central tetrahedron has a space center in common\\
&\qquad with some space center of the reference tetrahedron.\\
Property 15&The central tetrahedron has a space center on the Euler line\\
&\qquad of the reference tetrahedron.\\
Property 16&The reference tetrahedron has a space center on the Euler line\\
&\qquad of the central tetrahedron.\\ \hline
\end{tabular}
\end{center}

\smallskip
If the cevians concurred, we also checked to see if the point of
concurrence was a space center of the reference tetrahedron or if
it lied on the Euler line of the reference tetrahedron.
Also, if the four cevians formed a hyperbolic group, we computed the
center of the hyperboloid for which these cevians were generators
and checked this point to see if it was a space center of the
reference tetrahedron (or on its Euler line).

To find the center of the hyperboloid, we used the following result:

\begin{proposition}[\cite{Lee}] If $L_1$, $L_2$, and $L_3$ are three lines
that determine a hyperboloid of one sheet, then if one draws planes
through each of these lines parallel to the two others, then we
get a parallelepiped. The center of this parallelepiped is the
center of the hyperboloid.
\end{proposition}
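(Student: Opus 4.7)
My plan is to work in an affine coordinate system chosen so that the three line directions become the coordinate axes. Let $\vec{d}_1,\vec{d}_2,\vec{d}_3$ denote direction vectors of $L_1,L_2,L_3$. These must be linearly independent, since otherwise all three lines would be parallel to a common plane and the ruled quadric through them would be a hyperbolic paraboloid rather than a hyperboloid of one sheet. Taking $\vec{d}_1,\vec{d}_2,\vec{d}_3$ as a basis (with an arbitrary origin), each $L_i$ becomes a line parallel to the $i$-th axis and thus has the form $L_i\colon x_j=p_{ij},\ x_k=p_{ik}$ for the two indices $j,k\neq i$. The plane $\pi_{ij}$ through $L_i$ parallel to $L_j$ is then simply the level set $x_k=p_{ik}$ (with $k\notin\{i,j\}$), so all six faces of the parallelepiped are axis-aligned and its center $C$ has coordinates $\bigl(\tfrac12(p_{21}+p_{31}),\ \tfrac12(p_{12}+p_{32}),\ \tfrac12(p_{13}+p_{23})\bigr)$.

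Next I would translate the origin to $C$. After the translation, the three lines take the antipodally symmetric form
$$L_1\colon y=a,\ z=b;\quad L_2\colon x=c,\ z=-b;\quad L_3\colon x=-c,\ y=-a,$$
for suitable constants $a,b,c$. I would then determine the unique quadric $Q(x,y,z)=0$ through these lines. Writing $Q$ as a general degree-two polynomial with ten undetermined coefficients and demanding that it vanish identically on each $L_i$ produces three linear constraints per line. The vanishing of the coefficient of the running parameter squared kills all three pure square terms; the vanishing of the linear coefficient expresses the coefficients of $x$, $y$, $z$ in terms of the three cross-term coefficients; and the three constant conditions, after elimination, force the cross terms to be proportional to $(b,c,-a)$ and pin down the constant, yielding $Q=bxy+cyz-azx-abc$. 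Solving $\nabla Q=0$ then returns $(x,y,z)=(0,0,0)$, so the center of the hyperboloid is indeed the parallelepiped center $C$.

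The main difficulty is purely mechanical: one must push through the ten-coefficient elimination without sign errors and confirm that $Q$ is genuinely a hyperboloid of one sheet, which follows because any non-degenerate ruled quadric that contains three skew lines with linearly independent directions and possesses a center must be of this type. A more conceptual alternative would be to verify directly that the central inversion $\sigma\colon P\mapsto -P$ about $C$ sends each $L_i$ to the parallel opposite edge of the parallelepiped, and that all three opposite edges lie on the same quadric (either by substitution into $Q$ above or by a regulus argument); since the quadric through $L_1,L_2,L_3$ is unique, $\sigma$ is one of its symmetries, and its only fixed point is $C$.
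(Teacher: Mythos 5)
Your argument is essentially correct, but note that the paper does not prove this proposition at all: it is quoted from the cited source (Lee, \emph{Educational Times}, 1918) and used only as a computational device for locating the hyperboloid's center, so there is no "paper proof" to match. Your coordinate proof stands on its own and I verified its key computation: with the line directions as axes and the origin at the parallelepiped center, the lines become $L_1\colon y=a,\ z=b$, $L_2\colon x=c,\ z=-b$, $L_3\colon x=-c,\ y=-a$, and the elimination does force every quadric through them to be a multiple of $Q=bxy+cyz-azx-abc$, whose gradient vanishes only at the origin. Two small points deserve explicit mention to make the write-up airtight. First, the elimination (and the uniqueness of the critical point) needs $abc\neq 0$; this is not an extra hypothesis but follows from skewness, since e.g.\ $b=0$ would put $L_1$ and $L_2$ in the common plane $z=0$, where their non-parallel directions would force an intersection. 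Second, it is worth recording that the quadratic part of $Q$ has determinant $-abc/4\neq 0$, which is what guarantees that the quadric is central with a \emph{unique} center, so that "center of the hyperboloid'' is well defined and equals the origin; this also completes your remark that the surface is a hyperboloid of one sheet rather than a paraboloid. Your alternative argument via the central inversion $\sigma$ about $C$ is also sound, but as stated it still needs the substitution check that the $\sigma$-images of the three lines (the opposite edges of the parallelepiped) lie on $Q$ before uniqueness lets you conclude $\sigma(Q)=Q$; once that is done, $\sigma$ fixes the center and fixes only $C$, which is the cleanest conceptual route.
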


Thus, our test was as follows: Use formula 17 to find the plane through
$L_1$ and parallel to $L_2$. (See Appendix \ref{section:formulas} for formulas using
tetrahedral coordinates.) Let $X$ be the point of
intersection of $L_3$ with this plane (found via formula 18).
Similarly, find the plane through $L_1$ parallel to $L_3$. Let $Y$ be
the intersection of this plane and $L_2$.
Then the center of the hyperboloid is the midpoint of segment $XY$.

To make some of the computation of properties 1-16 easier, we first checked the property
for a specific tetrahedron with numerical sides. If the property was false
(using exact arithmetic) for this numerical case, then we did not bother
checking to see if the property was algebraically true in general.

\section{Results found for Arbitrary Tetrahedra}
\label{section:tetrahedra}

The following results were discovered and proven by our computer program.

\begin{theorem}
Consider the centroids on each face of an arbitrary tetrahedron.
Then\\
(a) The faces of the central terahedron are parallel to the corresponding
faces of the reference tetrahedron.\\
(b) The cevians to the centroids concur at the centroid of the reference tetrahedron.\\
(c) The central tetrahedron is similar to the reference tetrahedron.\\
(d) The central centroid coincides with the reference centroid.\\
(e) The central circumcenter coincides with the reference Euler point.\\
(f) The central Monge point lies on the reference Euler line (at 2/3).\\
(g) The central Euler point lies on the reference Euler line (at 8/9).\\
(h) The reference circumcenter lies on the central Euler line (at 4).\\
(i) The reference Monge point lies on the central Euler line (at -2).
\end{theorem}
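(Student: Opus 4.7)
The plan is to compute the four face centroids from the coordinate tables in Section 2 and observe that the central tetrahedron is a homothetic image of the reference tetrahedron. The triangle centroid has areal coordinates $(1,1,1)$, so the four face centroids acquire tetrahedral coordinates $G_1=(0,1,1,1)$, $G_2=(1,0,1,1)$, $G_3=(1,1,0,1)$, $G_4=(1,1,1,0)$; as position vectors $G_i = \tfrac{1}{3}\sum_{j\ne i}A_j$. With the reference centroid $G=\tfrac{1}{4}\sum_j A_j$, a one-line computation gives the key identity
\[
G_i - G \;=\; -\tfrac{1}{3}\,(A_i - G), \qquad i=1,2,3,4.
\]
Hence the central tetrahedron is the image of the reference tetrahedron under the homothety $\varphi$ of centre $G$ and ratio $-1/3$.

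From this single observation, parts (a)--(d) fall out immediately from standard properties of homotheties. Direction-preservation yields parallelism of corresponding faces (a); the ratio $-1/3$ gives similarity with ratio $1/3$ (c); the unique fixed point is $G$, so the central centroid equals the reference centroid (d); and the three points $A_i$, $G$, $G_i=\varphi(A_i)$ are collinear for each $i$, so all four cevians pass through $G$, proving (b).

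For parts (e)--(g), the crucial remark is that each canonically defined space centre of the central tetrahedron is the $\varphi$-image of the corresponding centre of the reference. Thus $O' = \tfrac{4G-O}{3}$, $M' = \tfrac{4G-M}{3}$, and $E' = \tfrac{4G-E}{3}$. I will invoke the classical facts $M = 2G - O$ (the centroid bisects $OM$) and $E = \tfrac{4G-O}{3}$ (the 12-point sphere passes through the four face centroids, so its centre $E$ is the circumcentre of the medial tetrahedron, which is precisely $\varphi(O)$). Parametrising the reference Euler line by $\mu(t) = (1-t)O + tG$, so that $O=\mu(0)$, $G=\mu(1)$, $M=\mu(2)$, $E=\mu(4/3)$, direct substitution then gives $O' = \mu(4/3) = E$ for (e), $M' = \mu(2/3)$ for (f), and $E' = \mu(8/9)$ for (g).

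Since $O',G',M',E'$ all lie on the reference Euler line, the two Euler lines coincide, and only a change of parametrisation is needed for (h) and (i). Re-parametrising as $\mu'(t) = (1-t)O' + tG'$ and substituting $O' = (4G-O)/3$, $G' = G$, I compute $\mu'(4) = -3O' + 4G = O$ and $\mu'(-2) = 3O' - 2G = 2G - O = M$, which are exactly (h) and (i). The only nontrivial input in the whole argument is the classical location of the Euler point on the Euler line of a tetrahedron (treated in Appendix~\ref{section:tetrahedronCenters}); the main obstacle is merely keeping the two parametrisation conventions straight, since everything else is driven by the single displacement identity $G_i - G = -\tfrac{1}{3}(A_i - G)$ and standard homothety bookkeeping.
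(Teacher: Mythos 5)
Your argument is correct, but it takes a genuinely different route from the paper's. The paper offers no hand proof of this theorem: as described in Section \ref{section:methodology}, each part was verified symbolically by a Mathematica program working in tetrahedral coordinates — the face centroids $(0,1,1,1),\dots,(1,1,1,0)$ and the space centers $G$, $O$, $M=2G-O$, $E=(2G+M)/3$ are computed as explicit expressions in the six edge lengths, and each of (a)--(i) is checked as an algebraic identity. You instead derive all nine parts by hand from the single displacement identity $G_i-G=-\tfrac13(A_i-G)$, i.e.\ from the fact that the central tetrahedron is the image of the reference tetrahedron under the homothety $\varphi$ with center $G$ and ratio $-1/3$, combined with the equivariance of the space centers under similarities (for the Monge and Euler points this follows from equivariance of $G$ and $O$ alone, via the affine relations $M=2G-O$ and $E=(2G+M)/3$). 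Your route is conceptually cleaner and actually explains where the constants $2/3$, $8/9$, $4$, $-2$ come from; your parametrization $\mu(t)=(1-t)O+tG$ reproduces exactly the paper's stated values, so your convention matches the author's. What the computer approach buys is uniformity: the same mechanical pipeline disposes of the many other results in the paper with no case-specific insight. Two small points to tidy: your justification of $E=(4G-O)/3$ via the 12-point sphere passing through the face centroids is essentially statement (e) itself, so to avoid circularity simply cite the paper's definition $E=(2G+M)/3$ from Appendix \ref{section:tetrahedronCenters} and then read (e) off from $O'=\varphi(O)=(4G-O)/3$; and in (a) it is worth stating explicitly that the parallelism is the correspondence induced by $\varphi$, namely face $G_jG_kG_l$ of the central tetrahedron is parallel to face $A_jA_kA_l$ of the reference tetrahedron.
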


\begin{theorem}
For an arbitrary tetrahedron,
the normals at the circumcenters of each face
concur at the circumcenter of the reference tetrahedron.
\end{theorem}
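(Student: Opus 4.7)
The plan is to replace the algebraic framework with one clean geometric observation and let everything follow from it: the normal to a plane $\pi$ through the circumcenter of a triangle $T\subset\pi$ is precisely the locus of points in space equidistant from the three vertices of $T$. This is because any point $P$ equidistant from the vertices projects orthogonally onto a point of $\pi$ that is itself equidistant from those three vertices, i.e.\ onto the circumcenter of $T$, so $P$ lies on the line through that circumcenter perpendicular to $\pi$; conversely every point on that line is equidistant from the three vertices by Pythagoras.

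Apply this to the four faces of the reference tetrahedron $A_1A_2A_3A_4$. The normal $\ell_i$ at the circumcenter of face $i$ is the set of points $P\in\mathbb{R}^3$ with $|PA_j|$ constant as $j$ ranges over the three vertices of face $i$. Now let $O$ be the circumcenter of the reference tetrahedron, so $|OA_1|=|OA_2|=|OA_3|=|OA_4|$. For each $i$, $O$ is in particular equidistant from the three vertices of face $i$, so $O\in\ell_i$. This proves the four normals share the point $O$ and hence concur there, which is the statement of the theorem.

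The only thing one must still verify is that this point of concurrence is unique, i.e.\ that the four lines are not all coincident. This is automatic because no two faces of a (nondegenerate) tetrahedron are parallel, so the normals $\ell_i$ have pairwise distinct directions; in particular they meet in a single point, which must be $O$.

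I do not foresee any obstacle in this argument: the circumcenter and its defining equidistance property do all the work, and no coordinate computation is required. Should one prefer the coordinate proof in the style of the rest of the paper, one would instead take the face-$i$ circumcenter coordinates supplied by the general formula of Section~\ref{section:faceCenters} applied to $f(a,b,c)=a(b^2+c^2-a^2)$, write down the line through that point in the direction of the outward normal to face $i$ (e.g.\ via the cross product of two edge vectors of face $i$), and verify symbolically that all four such lines pass through the tetrahedral coordinates of $O$; the synthetic argument above is a proof that this verification must succeed.
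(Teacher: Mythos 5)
Your argument is correct, and it is genuinely different from the paper's. The paper proves this theorem the same way it proves all the results in Sections~\ref{section:tetrahedra}--\ref{section:harmonic}: the Mathematica program computes the tetrahedral coordinates of the face circumcenters (via the center function applied on each face as in Section~\ref{section:faceCenters}), constructs the normal lines, and verifies symbolically that they pass through the circumcenter $O$ of the reference tetrahedron --- a mechanical coordinate verification with no geometric insight exposed. You instead use the classical characterization that the perpendicular to a face at its circumcenter is exactly the locus of points equidistant from that face's three vertices (the intersection of the perpendicular bisector planes of its edges), so that $O$, being equidistant from all four vertices, automatically lies on all four normals. This is shorter, human-checkable, and explains \emph{why} the concurrence point is $O$ rather than merely confirming it; the paper's method buys uniformity, since the same machinery simultaneously tests this and over a hundred other center/property combinations where no synthetic argument is readily available. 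One small remark: your closing paragraph about ``uniqueness'' is unnecessary for the statement --- concurrence only requires a common point, which you have already exhibited --- though the observation that the normals have pairwise distinct directions is harmless and does show the common point is unique.
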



\begin{theorem}
For an arbitrary tetrahedron,
the lines to the $r$-power points form a hyperbolic group.
These include the incenters, the centroids, and the symmedian points.
\end{theorem}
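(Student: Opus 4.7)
The plan is to produce a quadric that contains all four cevians; such a quadric is generically a hyperboloid of one sheet, and its opposite ruling then supplies an infinite family of lines meeting all four cevians, which is precisely the hyperbolic group property by the paper's definition.

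First, apply the face-coordinate formulas of Section~\ref{section:faceCenters} to the $r$-power point, whose areal coordinates are $(a_1^s,a_2^s,a_3^s)$ with $s=r+1$, and record the key symmetry: the coefficient $p_{ij}$ of $A_j$ in the face-center $P_i$ equals the coefficient $p_{ji}$ of $A_i$ in $P_j$, because both are the $s$-th power of the length of the edge opposite $A_iA_j$. Writing this shared value as $q_{ij}$, we have $q_{12}=b_3^s$, $q_{13}=b_2^s$, $q_{14}=a_1^s$, $q_{23}=b_1^s$, $q_{24}=a_2^s$, $q_{34}=a_3^s$. Next, parametrize each cevian as $y=A_i+tP_i$ and require that a symmetric quadric $y\T M y=0$ contain it. Expanding in powers of $t$ gives three conditions per cevian: the $t^2$-terms force $M_{ii}=0$; the $t$-terms yield four linear equations $\sum_{j\ne i}M_{ij}q_{ij}=0$ on the six off-diagonal entries of $M$. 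Using three of these to eliminate $M_{14},M_{24},M_{34}$ in favor of $M_{12},M_{13},M_{23}$ and substituting into the fourth leaves the single residual relation $M_{12}q_{12}+M_{13}q_{13}+M_{23}q_{23}=0$, so a two-dimensional solution space remains.

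The hard part is to show that the four $t^0$ conditions $P_i\T M P_i=0$ all collapse, after these substitutions, to the single additional equation $M_{12}q_{14}q_{24}+M_{13}q_{14}q_{34}+M_{23}q_{24}q_{34}=0$. This is a short direct computation once the substitutions are in hand, and it makes essential use of the symmetry $q_{ij}=q_{ji}$; absent that symmetry the four conditions would be independent and no common quadric would exist. Once the collapse is verified, we are left with two linear constraints in three unknowns, giving a one-dimensional family of solutions and hence a quadric, unique up to scaling, containing all four cevians. Its opposite ruling supplies the required infinite family of transversals, so the cevians form a hyperbolic group. The incenter, centroid, and symmedian point correspond to $s=1$, $s=0$, and $s=2$ respectively, so the general result specializes to each of the three named cases.
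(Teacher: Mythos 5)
Your proof is correct, but it takes a genuinely different route from the paper's. The paper's proof is a symbolic coordinate verification by its Mathematica program, and the human-readable mechanism it supplies later (Section \ref{section:hyperbolicLines}) goes through spear lines: by the Hyperbolic Condition, a center function $F$ is hyperbolic for every tetrahedron iff $F(b_2,a_1,b_3)F(b_3,b_1,a_2)F(b_1,a_3,b_2)=F(b_3,b_2,a_1)F(b_1,a_2,b_3)F(b_2,b_1,a_3)$, which for the areal power function $F(a,b,c)=a^{r+1}$ is the trivial identity $(b_1b_2b_3)^{r+1}=(b_1b_2b_3)^{r+1}$; Altshiller-Court's proposition (a spear line through each vertex implies a hyperbolic group) then finishes. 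You instead build the common ruled quadric directly from the symmetry $p_{ij}=p_{ji}=q_{ij}$ of the power-point face coordinates, and your asserted collapse does check out: writing $u_{ij}=M_{ij}q_{ij}$, the four conditions $A_i\T M P_i=0$ force $u_{34}=u_{12}$, $u_{24}=u_{13}$, $u_{23}=u_{14}$ with $u_{12}+u_{13}+u_{23}=0$, and on that two-dimensional space the four conditions $P_i\T M P_i=0$ all restrict to proportional linear forms (an identity in the six quantities $q_{ij}$), leaving a nonzero $M$, generically unique up to scale. What your route buys is a self-contained argument that avoids the spear-line proposition and makes visible exactly what is special about power points, namely that the matrix $(p_{ij})$ is symmetric (compare the paper's converse theorem that power points are the only universally hyperbolic centers); what the paper's route buys is brevity, since its general machinery reduces the theorem to a one-line identity. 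Two minor points: your labels for the $t^0$ and $t^2$ coefficients are swapped (the constant term gives $M_{ii}=0$, the $t^2$ term gives $P_i\T M P_i=0$), which is harmless; and, as with the paper, the conclusion should be read generically --- when the cevians are pairwise skew the quadric you produce cannot be degenerate, the cevians lie in one ruling and the opposite ruling gives the transversals, while in degenerate cases (e.g.\ isodynamic tetrahedra, where these cevians concur) the hyperbolic-group statement itself degenerates.
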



\textbf{Note.}
Results found for specific tetrahedra that are immediate consequences
of results in this section for arbitrary tetrahedra will not necessarily
be listed again below.

\begin{theorem}
For fixed $r$, the $2a^r+b^r+c^r$ points of an arbitrary tetrahedron
form a central tetrahedron 
that has the same centroid as the reference tetrahedron.
\end{theorem}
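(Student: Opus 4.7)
The plan is to apply the general computational strategy from Sections 2 and 4 to the Z8 center. The Z8 trilinear is $2a^r+b^r+c^r$, so its areal center function (obtained by the standard trilinear-to-areal conversion) is $f(x,y,z)=x(2x^r+y^r+z^r)$. A useful preliminary observation is that
$$f(x,y,z) = x^{r+1} + x\bigl(x^r+y^r+z^r\bigr),$$
which displays $f$ as the sum of the areal weight $x^{r+1}$ of the $r$-power point and $S_r=x^r+y^r+z^r$ times the areal weight $x$ of the incenter. From this decomposition one computes that the sum of the three non-zero tetrahedral coordinates of the center on face $i$ simplifies to
$$\sigma_i \;=\; S_{r+1}^{(i)} + S_1^{(i)}\,S_r^{(i)},$$
where $S_k^{(i)}$ denotes the $k$th power sum of the three side lengths of face $i$.

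The next step is to rephrase the centroid equality. Writing the normalized position of each face center as $P_i = \sum_{j\neq i}(w_{ij}/\sigma_i)A_j$ in the reference tetrahedron's tetrahedral system, the centroid of the central tetrahedron is $\tfrac14\sum_i P_i$, and this equals the reference centroid $\tfrac14\sum_j A_j$ if and only if the total normalized weight
$$W_j \;:=\; \sum_{i\neq j}\frac{w_{ij}}{\sigma_i} \;=\; 1 \qquad (j=1,2,3,4)$$
of each vertex $A_j$ across the three faces containing it equals~$1$. Because $\sum_j W_j = 4$ holds automatically and the construction is invariant under the cyclic relabelling $A_1\to A_2\to A_3\to A_4$ built into Section 2, it is enough to verify one of the four identities, say $W_1=1$. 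Explicitly, from the formulas at the end of Section 2 one has $w_{i1} = e(2e^r+t^r+u^r)$ where $e$ is the edge of face $i$ opposite $A_1$ (and these three edges, as $i$ ranges over $\{2,3,4\}$, are precisely the three sides of face $1$), while $t,u$ are the other two sides of face $i$.

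The main obstacle is the symbolic verification of $W_1=1$. The three terms carry three distinct denominators $\sigma_2,\sigma_3,\sigma_4$, and there is no visible telescoping between them; once the common denominator is cleared, the identity becomes a polynomial relation of rather high degree in the six edge lengths and in the parameter $r$. My plan is to hand this identity to Mathematica in the manner of the paper's Methodology section, applying \texttt{Together} to combine the fractions and \texttt{Simplify} to check that the difference with $1$ vanishes identically. A more conceptual route would track the contributions of the $r$-power-point piece $x^{r+1}$ and the incenter-scaled piece $xS_r$ to $W_j$ separately and argue that the cross-coupling through the denominators $\sigma_i = S_{r+1}^{(i)} + S_1^{(i)}S_r^{(i)}$ provides exactly the compensation that renders $W_j$ independent of $j$; such an argument would also make transparent the necessity of the coefficient $2$ in front of $a^r$.
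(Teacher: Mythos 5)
Your reduction of ``same centroid'' to the identities $W_j=1$ is the right reading of the statement, and the decomposition $f(x,y,z)=x^{r+1}+xS_r$ with $\sigma_i=S_{r+1}^{(i)}+S_1^{(i)}S_r^{(i)}$ is correct; the plan then matches the paper's methodology, since the paper's own proof is nothing more than a symbolic Mathematica verification. The fatal problem is the last step: the identity $W_1=1$ you intend to hand to Mathematica is not an identity for $f(x,y,z)=x(2x^r+y^r+z^r)$. Take $r=1$ and $A_1=(0,0,0)$, $A_2=(1,0,0)$, $A_3=(0,1,0)$, $A_4=(0,0,1)$. Each of the three faces through $A_1$ has sides $\sqrt2,1,1$ with the side $\sqrt2$ opposite $A_1$, so each normalized weight of $A_1$ equals $\sqrt2(2\sqrt2+2)/(10+4\sqrt2)\approx 0.436$, giving $W_1\approx 1.31\neq 1$. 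Indeed no center other than the centroid can satisfy $W_j\equiv 1$ for all tetrahedra: in $W_1$ the edge $a_1$ occurs only in the face-$4$ term, so $F(a_1,a_2,a_3)/\sigma_4$ would have to be independent of $a_1$; writing $u(b,c)$ for the normalized first areal coordinate, the condition becomes $u(a_2,a_3)+u(b_1,a_2)+u(b_1,a_3)=1$ for all admissible values, and differentiating this symmetric functional equation forces $u\equiv\tfrac13$, i.e.\ the centroid. So \texttt{Together}/\texttt{Simplify} will return a nonzero difference, and no rearrangement of the coupling between the $x^{r+1}$ and $xS_r$ pieces can repair it.

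What a program can actually certify here is a weighted (unnormalized) version of the statement, and it attaches to the weights $2a^r+b^r+c^r$ read as areal coordinates (under the paper's conversion convention that is the Z11 center rather than your trilinear-converted $x(2x^r+y^r+z^r)$, for which even the unnormalized sums fail to be symmetric). If the point on face $i$ carries the unnormalized coordinate $e_{ij}^r+S_r^{(i)}$ at vertex $A_j$, where $e_{ij}$ is the edge of face $i$ opposite $A_j$, then summing over the three faces through $A_j$ gives $S_r^{(j)}+\bigl(2\sum_{\text{edges}}e^r-S_r^{(j)}\bigr)=2\sum_{\text{edges}}e^r$, independent of $j$; hence the four unnormalized vectors add to a multiple of $(1,1,1,1)$ --- a two-line proof, and surely the identity behind the computer's report. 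This coincides with the true, equally weighted centroid statement only when the four normalizing sums $\sigma_i$ agree (for instance in an isosceles tetrahedron, recovering Theorem 5(c)); in the example above the genuine centroid of the four face points differs from the reference centroid under either reading of the center function. So you should either prove the weighted identity just described and flag the normalization issue in the theorem's wording, or restrict the claim; as written, your plan terminates in a polynomial identity that is false.
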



\section{Results found for Isosceles Tetrahedra}
\label{section:isosceles}

The following results were discovered and proven by our computer program.

\begin{theorem}
Consider an arbitrary center on each face of an isosceles tetrahedron.
(The same type of center is considered on each face.)
Then\\
(a) The cevians to these centers have the same length.\\
(b) The central tetrahedron is isosceles.\\
(c) The central tetrahedron has the same centroid as the reference tetrahedron.\\
(d) The cevians form a hyperbolic group.
\end{theorem}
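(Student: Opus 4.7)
The plan is to exploit the cyclic structure of the face-center coordinates in the isosceles case, together with the Klein four-group $V$ of isometries of the isosceles tetrahedron (the three $180^\circ$ rotations about the axes joining midpoints of opposite edges). Substituting $b_i = a_i$ into the formulas of Section 2 and using the standard bisymmetry $f(a,b,c) = f(a,c,b)$ of any triangle-center function, the four face centers collapse to
\[
P_1 = (0,f_3,f_2,f_1),\quad P_2 = (f_3,0,f_1,f_2),\quad P_3 = (f_2,f_1,0,f_3),\quad P_4 = (f_1,f_2,f_3,0),
\]
where $f_i := f(a_i,a_{i+1},a_{i+2})$ with indices read modulo $3$. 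Each coordinate quadruple sums to $s := f_1+f_2+f_3$, so summing the four position vectors yields $s(A_1+A_2+A_3+A_4)/s$, i.e., four times the reference centroid; this proves (c). For (a) and (b), each non-identity $\sigma \in V$ is an isometry that permutes vertices as a product of two transpositions and hence the four congruent faces by the same rule, carrying the chosen triangle center on one face to that on its image face. Thus $\sigma$ sends $A_iP_i$ to $A_{\sigma(i)}P_{\sigma(i)}$; as the three non-identity elements of $V$ collectively carry $L_1$ onto $L_2, L_3, L_4$, all four cevians are congruent, proving (a). The same orbit structure shows that each pair of opposite edges of the central tetrahedron is exchanged by exactly one element of $V$, proving (b).

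The main obstacle is part (d). To show $L_1,\ldots,L_4$ form a hyperbolic group, the plan is to exhibit a non-degenerate ruled quadric on which all four cevians are generators of a single ruling; its complementary ruling then supplies the required one-parameter family of common transversals. Introduce Cartesian coordinates centered at the common centroid with axes along the three $V$-invariant midline axes of the tetrahedron; then $V$ acts by sign flips of pairs of coordinates, so any $V$-invariant quadric is automatically diagonal, of the form $\alpha x^2 + \beta y^2 + \gamma z^2 = \delta$. Requiring $L_1$ to lie on such a quadric imposes three homogeneous linear conditions on $(\alpha,\beta,\gamma,\delta)$, namely the vanishing of the three coefficients of the induced quadratic in the line parameter. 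Three equations in four unknowns always admit a nontrivial solution, so a $V$-invariant quadric through $L_1$ exists, and by $V$-invariance it automatically contains $L_2, L_3, L_4$ as well.

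The step I expect to be the most delicate is the non-degeneracy check: one must verify that the quadric so produced is a genuine hyperboloid of one sheet (or a hyperbolic paraboloid), rather than a cone, a cylinder, or a pair of planes. For a general triangle center in a general isosceles tetrahedron the cevians are pairwise skew and neither concurrent nor parallel, which excludes these degenerate possibilities, but confirming it on the nose reduces to a polynomial non-vanishing condition in $a_1,a_2,a_3$ and $f_1,f_2,f_3$, naturally carried out by the Mathematica routines described in Section 4.
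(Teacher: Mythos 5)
Your treatment of (a)--(c) is correct and is a genuinely different route from the paper's: the paper proves this theorem by a symbolic coordinate verification in Mathematica, whereas you observe that with $b_i=a_i$ the Section~2 face-center coordinates collapse to $(0,f_3,f_2,f_1)$, $(f_3,0,f_1,f_2)$, $(f_2,f_1,0,f_3)$, $(f_1,f_2,f_3,0)$ (which is right, using $f(a,b,c)=f(a,c,b)$), so the vertex permutations $(12)(34)$, $(13)(24)$, $(14)(23)$ -- which are isometries precisely because the tetrahedron is isosceles -- permute the $P_i$ by the same double transpositions. Averaging the exact coordinates gives (c), the $V$-orbit of $L_1$ gives (a), and the induced pairing of opposite edges of $P_1P_2P_3P_4$ gives (b). This is more conceptual than the paper's computation and is fine.

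Part (d), however, is not proven. Your argument produces only \emph{some} $V$-invariant quadric $\alpha x^2+\beta y^2+\gamma z^2=\delta$ containing the four cevians; whether that quadric is a nondegenerate ruled quadric, and whether the cevians are pairwise skew and lie in a single ruling, is exactly the content of the hyperbolic-group claim, and you explicitly defer it to an unperformed ``polynomial non-vanishing'' check. This is not a removable formality: for the centroid the four cevians concur at the reference centroid, every invariant quadric through them is a cone, and the skew-lines definition of a hyperbolic group fails outright, so some hypothesis and an actual verification are unavoidable, and nothing in your proposal supplies them. The paper closes this step by a different and complete argument: by Altshiller-Court's spear-line proposition it suffices to check the algebraic hyperbolic condition
$$F(b_2,a_1,b_3)F(b_3,b_1,a_2)F(b_1,a_3,b_2)=F(b_3,b_2,a_1)F(b_1,a_2,b_3)F(b_2,b_1,a_3),$$
and when $b_i=a_i$ this is an identity because a center function is symmetric in its last two arguments (the paper's corollary ``in an isosceles tetrahedron, all center functions are hyperbolic''). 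Substituting that two-line verification for your quadric construction would complete (d); as written, your proof of (d) has a genuine gap.
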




\section{Results found for Circumscriptible Tetrahedra}
\label{section:circumscriptible}

The following results were discovered and proven by our computer program.


\begin{theorem}
For a circumscriptible tetrahedron (in which $a_i+b_i=t$, $i=1,2,3$),\\
(a) The cevians to the Gergonne points concur.\\
\hspace*{15pt} The 4th coordinate of the intersection point is\\
\hspace*{15pt} $(a_2+a_3-a_1)(a_3+a_1-a_2)(a_1+a_2-a_3)$.\\
(b) The cevians to the Nagel points concur.\\
\hspace*{15pt} The 4th coordinate of the intersection point is
$a_1+a_2+a_3-2t$.\\
\hspace*{15pt} This equals $\frac{S}{2}-S_4$ where $S_i$ is the sum of the edges at $A_i$
and $S=\sum S_i$.\\
(c) The Feuerbach points are coplanar.\\
(d) The normals at the incenters concur.\\
(e) The normals at the X40 points concur.\\
\hspace*{15pt} Note that X40 is collinear with the incenter and circumcenter.
\end{theorem}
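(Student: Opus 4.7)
My plan is to attack each part of the theorem using the method that the rest of the paper employs: compute the tetrahedral coordinates of the chosen face center using the substitution formulas of Section \ref{section:faceCenters}, then impose the circumscriptible hypothesis $b_i=t-a_i$ and simplify. Throughout I abbreviate $u_i=a_j+a_k-a_i$ (with $\{i,j,k\}=\{1,2,3\}$) and $v=2t-(a_1+a_2+a_3)$.

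For (a), the Gergonne point has areal coordinates $(1/(s-a):1/(s-b):1/(s-c))$, and after the substitution the four face-centers become $P_1=(0,1/u_2,1/u_3,1/v)$, $P_2=(1/u_1,0,1/u_3,1/v)$, $P_3=(1/u_1,1/u_2,0,1/v)$, $P_4=(1/u_1,1/u_2,1/u_3,0)$: each $P_i$ is obtained from the single vector $Q=(1/u_1,1/u_2,1/u_3,1/v)$ by zeroing its $i$th entry. Hence $Q$ lies on every cevian $A_iP_i$; clearing denominators gives the stated fourth coordinate $u_1u_2u_3=(a_2+a_3-a_1)(a_3+a_1-a_2)(a_1+a_2-a_3)$. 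Part (b) is dual: the Nagel point $(s-a:s-b:s-c)$ produces $P_i=(u_1,u_2,u_3,v)$ with the $i$th entry zeroed, so the four cevians concur at $(u_1,u_2,u_3,v)$, yielding fourth coordinate $v=2t-(a_1+a_2+a_3)$.

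For (c), I would plug the areal Feuerbach formula $((s-a)(b-c)^2:(s-b)(c-a)^2:(s-c)(a-b)^2)$ into each face. After substituting $b_j=t-a_j$ and pulling the common factor $u_1u_2u_3v$ out of the four columns, the $4\times4$ coordinate matrix reduces to a symmetric hollow matrix whose upper-left $3\times 3$ block has entries $w_{ij}^2$ with $w_{ij}=a_i+a_j-t$, and whose last row and column carry the entries $(a_j-a_k)^2$ with $\{i,j,k\}=\{1,2,3\}$. Using the identity $w_{ij}^2=(a_i-a_j)^2+(2a_i-t)(2a_j-t)$, the upper-left block splits as a pure $(a_i-a_j)^2$-pattern plus a rank-one outer product in the variables $p_i:=2a_i-t$. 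I expect this decomposition to expose an explicit linear dependence among the four rows, making the determinant vanish. This algebraic cancellation is the main obstacle of the theorem, since (c) is the only part for which I see no short geometric proof: the incidence argument below places all four Feuerbach points on the midsphere but not automatically on a common plane.

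Parts (d) and (e) admit short geometric proofs. In a circumscriptible tetrahedron the six edges are all tangent to the midsphere; let $M$ be its center and let $\tau_i$ denote the common tangent length from $A_i$. Solving the six equations $\tau_i+\tau_j=A_iA_j$ using $b_i=t-a_i$ yields $\tau_1=u_1/2$, $\tau_2=u_2/2$, $\tau_3=u_3/2$, $\tau_4=v/2$, which are exactly the tangent lengths from those vertices to the incircles of the faces they lie in. So the three midsphere tangency points on the edges of face $i$ coincide with the three incircle contact points of that face; since both triples lie in the plane of face $i$, they determine the same circle, and the midsphere meets each face in that face's incircle. Consequently the foot of perpendicular from $M$ to face $i$ is the incenter $I_i$, and the four normals in (d) concur at $M$. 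For (e), the tetrahedron's circumcenter $O$ is equidistant from every vertex, so by Pythagoras its perpendicular foot on face $i$ is the face circumcenter $O_i$. Since $\mathrm{X}(40)=2\mathrm{X}(3)-\mathrm{X}(1)$ by definition of the Bevan point, the point $2O-M$ projects orthogonally onto face $i$ at $2O_i-I_i=\mathrm{X}(40)_i$, so the four normals in (e) all pass through $2O-M$.
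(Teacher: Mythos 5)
Your parts (a), (b), (d) and (e) are correct. The coordinate computations in (a) and (b) reproduce what the paper's Mathematica run establishes (your concurrency point $(u_1,u_2,u_3,v)$ for the Nagel cevians has fourth coordinate $2t-(a_1+a_2+a_3)$, the negative of the printed value, but tetrahedral coordinates are homogeneous, so this is the same point), and your (d) and (e) are a genuinely different route: where the paper verifies the normal concurrences symbolically (in effect via the criterion of Section \ref{section:concurrentNormals}), you identify the midsphere section of each face with that face's incircle via equal tangent lengths $\tau_1=u_1/2,\dots,\tau_4=v/2$, so the normals at the incenters meet at the midsphere center, and then use the affinity of orthogonal projection together with $X_{40}=2X_3-X_1$ to place the second concurrence point at $2O-M$ (with $M$ your midsphere center, not the paper's Monge point). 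That argument is sound and more illuminating than the coordinate check, and it also explains geometrically why all four Feuerbach points lie on the midsphere.

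The genuine gap is part (c). You set up the right test (Formula 2), and your reduction is correct: factoring $u_1,u_2,u_3,v$ out of the columns leaves the symmetric hollow matrix with entries $w_{ij}^2=(a_i+a_j-t)^2$ in the upper-left block and $(a_j-a_k)^2$ in the last row and column. But at the decisive moment you only say you ``expect'' the splitting $w_{ij}^2=(a_i-a_j)^2+p_ip_j$, $p_i=2a_i-t$, to expose a row dependence; it does not do so by itself, because the rank-one term $p_ip_j$ occurs only off the diagonal (the full outer product $pp^{\mathrm T}$ would contribute $p_i^2$ on the diagonal, which must be subtracted back), so no immediate rank argument applies, and the actual null vector of the reduced matrix has coefficients with no evident closed pattern. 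What remains is nevertheless finite: since $w_{ij}=(p_i+p_j)/2$ and $a_i-a_j=(p_i-p_j)/2$, every entry depends only on $p_1,p_2,p_3$, and the claim is a single polynomial identity (the determinant of the matrix with off-diagonal block entries $(p_i+p_j)^2$ and border entries $(p_j-p_k)^2$ vanishes identically), which must actually be expanded and checked --- this is precisely the symbolic verification the paper delegates to its program. Until you either carry out that expansion or exhibit the explicit linear combination of the four rows that vanishes, part (c) is a plan rather than a proof.
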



\begin{theorem}
For a circumscriptible tetrahedron,
the lines to the following triangle centers form a hyperbolic group:\\
(a) Gergonne points (and their inverses)\\
(b) Nagel points (and their inverses)\\
(c) Mittenpunkts (and their inverses)\\
(d) X41 points (and their inverses)\\
(e) Feuerbach points (and their inverses).
\end{theorem}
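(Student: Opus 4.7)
The plan is to follow the computer-algebra methodology established in Section 4 and to verify each of the five claims (a)--(e) as a symbolic identity under the circumscriptible substitution $b_i = t - a_i$ for $i=1,2,3$. The guiding geometric fact is that four pairwise skew lines form a hyperbolic group (have infinitely many common transversals) if and only if they lie in a common ruling of a hyperboloid of one sheet; equivalently, three of them $L_1,L_2,L_3$ determine such a hyperboloid, and one must check that $L_4$ lies on the opposite regulus. This is detectable as the vanishing of a single polynomial condition once the lines are written in tetrahedral (or Plücker) coordinates.

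First I would express each of the ten center types (Gergonne, Nagel, Mittenpunkt, X41, Feuerbach, and their coordinate-reciprocal ``inverses'') by its areal center function $f(x_1,x_2,x_3)$, then apply the face-to-tetrahedron mapping of Section 2 to obtain the tetrahedral coordinates of the center $P_i$ on face $i$ (for $i=1,2,3,4$), with the correct cyclic permutation of the edge labels $(a_j,b_k)$ on each face. Each cevian $L_i = A_i P_i$ is then written as the join of $A_i$ with $P_i$. Next I would impose $b_i = t - a_i$ and clear square roots using the trick of Section 2 (replacing $x/(y + zK)$ by $x(y-zK)/(y^2 - z^2 K^2)$) followed by Heron's formula for $K^2$ on each face, so that all coordinates become polynomial in $a_1,a_2,a_3,t$. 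To test the hyperbolic-group property I would parametrize the one-parameter family of common transversals to $L_1,L_2,L_3$ (a line meeting three skew lines is determined by one scalar, e.g.\ by choosing a point on $L_1$ and demanding that its joins to $L_2$ and to $L_3$ be collinear), and then verify that the equation expressing ``$L_4$ meets some transversal in this family'' is satisfied identically.

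The main obstacle will be algebraic bulk. The Feuerbach and X41 coordinates involve the area $K$ of the face as well as fractional denominators, so after rationalization and Heron substitution the cevian direction vectors are large polynomials in $a_1,a_2,a_3,t$, and parametrizing the opposite regulus of three such lines roughly doubles the working degree. To keep this tractable I would first test a numerical circumscriptible tetrahedron with exact rational sides satisfying $a_i + b_i = t$ in order to catch indexing errors before launching the symbolic computation. I would also exploit the fact that the five families in (a)--(e) together involve only a small number of ``archetype'' first coordinates ($s-a$, $1/(s-a)$, $b+c-a$, $a(b+c-a)$, etc.), so the same regulus-membership routine can be applied uniformly. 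The most error-prone step in practice is the initial bookkeeping of which permutation $(a_i,b_j,b_k)$ labels each face under the $4$-fold symmetry; once that is correctly set up, the final verification that the hyperboloid-membership polynomial reduces to $0$ modulo the circumscriptible relation is a routine, if sizeable, polynomial-reduction task inside Mathematica.
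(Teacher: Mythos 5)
Your plan is, at bottom, the same kind of certificate the paper relies on --- a symbolic verification under the circumscriptible substitution $b_i=t-a_i$, carried out in Mathematica with the face-to-tetrahedron coordinate bookkeeping of Section 2 --- but you certify the hyperbolic-group property by a different and much heavier test. The paper reduces Property 2 to the classical spear-line criterion of Altshiller-Court (if through each vertex there is a line meeting the three cevians from the other vertices, the four cevians form a hyperbolic group; by the 4-fold symmetry only one vertex need be checked), which collapses the whole question to a single polynomial identity in the areal center function, the Hyperbolic Condition $F(b_2,a_1,b_3)F(b_3,b_1,a_2)F(b_1,a_3,b_2)=F(b_3,b_2,a_1)F(b_1,a_2,b_3)F(b_2,b_1,a_3)$; under $a_i+b_i=t$ this is a few lines of algebra for each of the five centers, and the general theorems of Section 12 (centers $[h(b)+h(c)-h(a)]^q$ with $h(x)=x$, closure under $F\mapsto a^rF^q$ and under isotomic/isogonal conjugation) already dispose of (a)--(d) and the inverses conceptually, leaving only the Feuerbach function $(b+c-a)(b-c)^2$ to a direct check of the identity. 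Your regulus-membership test --- parametrize the common transversals of $L_1,L_2,L_3$ by the point chosen on $L_1$ and impose incidence with $L_4$ --- is a valid certificate, but only if the incidence equation vanishes identically in the family parameter; as phrased (``$L_4$ meets \emph{some} transversal'') the condition is too weak, since four skew lines in general position already admit a common transversal or two without forming a hyperbolic group, so the implemented test must be that \emph{every} transversal meets $L_4$, i.e.\ that the quadric through $L_1,L_2,L_3$ contains $L_4$. Two smaller points: the Feuerbach and X41 centers have polynomial areal center functions free of the face area $K$, so the rationalization step you anticipate is not needed for them; and for the Gergonne and Nagel centers the cevians of a circumscriptible tetrahedron actually concur, so your three base lines are not skew and the quadric degenerates --- your parametrization can be made to survive this, but the paper's spear-line identity sidesteps the degeneracy entirely, which is part of what its reduction buys.
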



\section{Results found for Isodynamic Tetrahedra}
\label{section:isodynamic}

The following results were discovered and proven by our computer program.


\begin{theorem}
For an isodynamic tetrahedron,\\
(a) The cevians to any power point concur.\\
\hspace*{15pt} The 4th coordinate of the intersection point is
$a_1a_2^{r+1}a_3$.\\
(b) The Feuerbach points are coplanar.\\
(c) The X44 points are coplanar.\\
(d) The Lemoine axes are coplanar.\\
(e) The circumcenter of the X76 points (3rd power point inverses)
coincides with the reference centroid.
\end{theorem}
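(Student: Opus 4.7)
All five parts proceed by the same mechanical recipe: write the tetrahedral coordinates of the four face centers using the substitution rules of Section \ref{section:faceCenters}, impose the isodynamic relations $b_i=k/a_i$ for $i=1,2,3$, and then verify the desired algebraic identity. For \textbf{(a)}, the $r$-power point has areal center function $f(a,b,c)=a^{r+1}$, so the center on face $i$ gets a cyclically-permuted tetrahedral coordinate vector. A candidate concurrence point $P=(c_1,c_2,c_3,c_4)$ lying on the cevian from $A_4$ is forced to satisfy $c_1:c_2:c_3 = a_1^{r+1}:a_2^{r+1}:a_3^{r+1}$, and lying on the cevian from $A_1$ it must also satisfy $c_2:c_3:c_4 = b_3^{r+1}:b_2^{r+1}:a_1^{r+1}$. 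Combining the two yields $a_2b_2=a_3b_3$, and the cevians from $A_2$ and $A_3$ supply the remaining isodynamic equalities. Once $b_i=k/a_i$ is substituted, all proportions are consistent, and $c_4$ is read off directly from the resulting linear system.

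For \textbf{(b), (c), (d)}, coplanarity of four tetrahedral points $Q_1,Q_2,Q_3,Q_4$ is equivalent to $\det[Q_1\,|\,Q_2\,|\,Q_3\,|\,Q_4]=0$, so for each of the Feuerbach and X44 centers I would write down the areal function $f$, apply the four face substitutions of Section \ref{section:faceCenters} to get the $Q_i$, expand the determinant, and reduce modulo the isodynamic ideal generated by $a_1b_1-a_2b_2$ and $a_2b_2-a_3b_3$. For (d), the Lemoine axes are lines rather than points; I would realize each axis as the trilinear polar of the symmedian point of its face (a linear equation in areal coordinates), lift each such line to a plane equation in tetrahedral coordinates via the Section \ref{section:faceCenters} rules, and show that the four resulting linear conditions have a common plane as their simultaneous solution, i.e.\ that a suitable $4\times 4$ coefficient determinant vanishes on the isodynamic ideal.

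For \textbf{(e)}, the X76 points---coming from the areal function $f(a,b,c)=b^2c^2$ (equivalently trilinear $1/a^3$)---produce four vertices of the central tetrahedron via Section \ref{section:faceCenters}, and the circumcenter of this central tetrahedron is found by equating squared distances from a candidate point to those four vertices using the distance formulas of Appendix \ref{section:formulas}. After the isodynamic substitution, I expect the resulting candidate to simplify to $(1:1:1:1)$, the reference centroid. The \textbf{main obstacle} is the bulk of algebra in (b): the Feuerbach point involves $1-\cos(B-C)$, which by the Law of Cosines and Heron's formula introduces square roots of quartic polynomials in the edge lengths. I would adopt the rationalized coordinates described at the end of Section \ref{section:faceCenters} to push all surds into numerators, then expand the coplanarity determinant by rows chosen so that the surds factor out as common multipliers and can be cleared. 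Parts (a), (c), and (e) are purely rational in the $a_i$ once the isodynamic substitution is in place, and should yield to standard symbolic simplification.
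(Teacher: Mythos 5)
Your proposal is correct and follows essentially the same route as the paper: the paper's proof is precisely the mechanical symbolic computation you describe — apply the face-coordinate substitutions of Section \ref{section:faceCenters}, impose $a_ib_i=$ constant, and verify concurrence, the coplanarity determinants, and the distance-based circumcenter condition with a computer algebra system (including the same rationalization of the surds arising from Heron's formula). Your by-hand proportionality argument for part (a) is sound and matches what the paper later proves in general (Theorem \ref{thm:power}'s companion result with $h(x)=\log x$), so no gap to report.
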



\begin{theorem}
For an isodynamic tetrahedron,
the lines to the following triangle centers form a hyperbolic group:\\
(a) Spieker centers (and their inverses)\\
(b) X37 points (and their inverses)\\
(c) X38 points (and their inverses)\\
(d) Brocard midpoint (and their inverses)\\
(e) X42 points (and their inverses)\\
(f) X106 points\\
(g) X107 points\\
(h) X108 points\\
(i) X109 points\\
(j) X110 points\\
(k) X111 points
\end{theorem}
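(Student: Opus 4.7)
The plan is to establish (a)--(k) by a uniform algebraic recipe, exploiting the defining relation $a_i b_i = t$ ($i=1,2,3$) of an isodynamic tetrahedron. After the substitution $b_i = t/a_i$, each edge length becomes a rational function of the three free parameters $a_1,a_2,a_3$ and the scale $t$, and the face-center formulas of Section \ref{section:faceCenters} produce, for each given center type, four cevians $L_1,\ldots,L_4$ whose line coordinates lie in $\mathbb{Q}(a_1,a_2,a_3,t)$, or for items (f)--(k) in a small tower of quadratic extensions.

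What must be checked is that the $L_i$ are pairwise skew and lie on a common hyperboloid of one sheet. I would encode this through the Pl\"ucker test for infinitely many common transversals: a line $M$ meets $L_i$ iff $\omega(M,L_i)=0$, where $\omega$ is the symplectic pairing on $\bigwedge^2\mathbb{R}^4$. Four such conditions generically cut out a pencil in $\mathbb{P}^5$ meeting the Pl\"ucker quadric in just the two common transversals of four skew lines, while infinitely many transversals exist precisely when the $4\times 6$ matrix of Pl\"ucker coordinates of $L_1,\ldots,L_4$ has rank at most $3$. Equivalently, one may compute the unique ruled quadric through any three skew cevians (as in Lee's proposition recalled in Section \ref{section:methodology}) and check that the fourth lies on it.

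Concretely, for each of (a)--(k) the procedure is: (i) instantiate the Section \ref{section:faceCenters} formulas for the chosen center type; (ii) read off the Pl\"ucker coordinates of $L_i=A_iP_i$; (iii) verify pairwise skewness via the nonvanishing of the six pairings $\omega(L_i,L_j)$, so that the configuration qualifies as a hyperbolic group in the sense of our definition; and (iv) verify, under the relation $a_ib_i=t$, that every $4\times 4$ minor of the Pl\"ucker matrix vanishes. Step (iv) is a polynomial identity in $a_1,a_2,a_3,t$, mechanically dispatched by the same Mathematica procedure invoked throughout the paper's Methodology section.

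The principal obstacle is items (f)--(k), the Kimberling centers $X_{106}$--$X_{111}$, whose trilinears involve $\sin A$ or $\cos A$ and hence, through $K=\frac14\sqrt{2\sum a_i^2 a_j^2-\sum a_i^4}$, a different surd on each face. After the isodynamic substitution, four distinct square roots $\sqrt{\Delta_1},\ldots,\sqrt{\Delta_4}$ enter the Pl\"ucker minors, so the rank identity must be verified in a compositum of quadratic extensions. I would apply the rationalization device already recorded in Section \ref{section:faceCenters}---replacing $x/(y+zK)$ by $x(y-zK)/(y^2-z^2K^2)$---and then square iteratively to clear each $\sqrt{\Delta_j}$, reducing the claim to a polynomial identity over $\mathbb{Q}(a_1,a_2,a_3,t)$. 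This symbolic bookkeeping, and not any deeper geometric issue, is the genuine work; once it is discharged, the Pl\"ucker rank identity holds and the hyperbolic-group conclusion follows.
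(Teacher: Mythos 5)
Your plan is sound and ends in the same place as the paper---a polynomial identity in $a_1,a_2,a_3,t$ after the substitution $b_i=t/a_i$, discharged by symbolic computation---but the criterion you propose to verify is different. The paper never tests a Pl\"ucker rank condition: following Altshiller-Court's proposition quoted in Section~\ref{section:hyperbolicLines}, it reduces the hyperbolic-group property to the existence of a single spear line through one vertex, whose algebraic form $z_1x_2y_3=y_1z_2x_3$ yields the one scalar ``Hyperbolic Condition'' $F(b_2,a_1,b_3)F(b_3,b_1,a_2)F(b_1,a_3,b_2)=F(b_3,b_2,a_1)F(b_1,a_2,b_3)F(b_2,b_1,a_3)$, symmetry ensuring that one such condition suffices. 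Your route---rank at most $3$ of the $4\times 6$ Pl\"ucker matrix, equivalently the fourth cevian lying on the ruled quadric through the other three---is a correct characterization for pairwise skew lines and is more general, since it applies to arbitrary quadruples of lines rather than cevians; what it costs is the vanishing of many $4\times4$ minors in place of one scalar identity, and it forgoes the structural payoff the paper extracts from that scalar condition: items (a)--(e) follow with no center-specific computation at all from the general theorem of Section~\ref{section:hyperbolicLines} that every center function $a^rg(b,c)$ with $g$ symmetric and homogeneous is hyperbolic in an isodynamic tetrahedron (the Spieker center, X37, X38, the Brocard midpoint X39, X42 and their isogonal and isotomic conjugates are all of this shape), leaving only (f)--(k) to brute force.

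Two smaller corrections. First, the ``principal obstacle'' you single out for (f)--(k) is largely illusory: eliminating angles uses $\cos A=(b^2+c^2-a^2)/(2bc)$, which is rational, and a factor of $\sin A$ (or $K$) common to all coordinates cancels, so the centers X106--X111 have center functions rational in the side lengths (for instance X110 $=a/(b^2-c^2)$ and X111 $=a/(2a^2-b^2-c^2)$ in trilinears); no tower of quadratic extensions arises, and the paper's $x/(y+zK)$ rationalization is needed only for centers that genuinely retain $K$. Second, your step (iii) cannot be established as an identity: the pairings $\omega(L_i,L_j)$ do vanish for special isodynamic tetrahedra (the regular tetrahedron is isodynamic, and there all four cevians concur), so skewness holds only generically---the same tacit genericity present in the paper's definition of a hyperbolic group. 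Neither point breaks your argument; they just mean the computation is easier than you anticipate and the conclusion is generic in the same sense as the paper's.
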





\section{Results found for Orthocentric Tetrahedra}
\label{section:orthocentric}

The following results were discovered and proven by our computer program.


\begin{theorem}
For an orthocentric tetrahedron (in which $a_i^2+b_i^2=t$, $i=1,2,3$),\\
(a) The cevians to the orthocenters concur.\\
\hspace*{15pt} The 4th coordinate of the intersection point is\\
\hspace*{15pt} $(a_2^2+a_3^2-a_1^2)(a_3^2+a_1^2-a_2^2)(a_1^2+a_2^2-a_3^2)$.\\
(b) The cevians to the isotomic conjugates of the orthocenters concur.\\
\hspace*{15pt} The 4th coordinate of the intersection point is
$a_1^2+a_2^2+a_3^2-2t$. This equals\\
\hspace*{15pt} $\frac{T}{2}-T_4$ where $T_i$ is the sum of the squares
of the edges at $A_i$
and $T=\sum T_i$.\\
(c) The centroid of the 9-point centers coincides with the reference centroid.\\
(d) The centroid of the orthocenters coincides with the reference Monge point.\\
(e) The circumcenter of the orthocenters lies on the reference Euler line.\\
(f) The Monge point of the orthocenters lies on the reference Euler line.\\
(g) The centroid of the X53 points coincide with the reference Monge point.
\end{theorem}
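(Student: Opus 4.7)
All seven parts admit a uniform approach: translate each relevant triangle center's barycentric formula into tetrahedral coordinates on each face via the substitutions in Section 2, apply the orthocentric identity $b_i^2 = t - a_i^2$ to collapse edge expressions, and then verify the claim by the appropriate coordinate computation. The orthocentric condition is exceptionally well-behaved under this reduction: expressions of the form $b_j^2 + b_k^2 - a_i^2$ and $a_i^2 + b_j^2 - b_k^2$ collapse to $2t - (a_1^2+a_2^2+a_3^2)$ and to symmetric polynomials in the reference face's edge squares. This single mechanism drives every part.

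For (a), I compute the orthocenter on each face in barycentrics of the form $(1/(q^2+r^2-p^2):\cdots)$, place it into tetrahedral coordinates, and apply the reduction. Writing $u=1/(a_2^2+a_3^2-a_1^2)$, $v=1/(a_1^2+a_3^2-a_2^2)$, $w=1/(a_1^2+a_2^2-a_3^2)$, and $z=1/(2t-a_1^2-a_2^2-a_3^2)$, the four face orthocenters take the strikingly symmetric form
\[
H_1=(0,v,w,z),\quad H_2=(u,0,w,z),\quad H_3=(u,v,0,z),\quad H_4=(u,v,w,0),
\]
from which concurrence at $(u,v,w,z)$ is immediate; clearing denominators in the 4th coordinate yields the stated expression. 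For (b), I repeat the computation with the isotomic conjugate $(x,y,z)\mapsto(1/x,1/y,1/z)$; the same symmetric pattern emerges, and the concurrence point's residual 4th coordinate is $2t-(a_1^2+a_2^2+a_3^2)$, which matches the claim up to sign.

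For the centroid-type statements (c), (d), and (g), the strategy is to exploit linearity: normalize each of the four face-centers so its tetrahedral coordinates sum to $1$, then show the coordinatewise average equals the reference centroid or the reference Monge point as claimed. For (d), the orthocenter coordinates already computed in (a) should collapse, after the orthocentric substitution, to a multiple of the known tetrahedral coordinates of the reference Monge point (the reflection of the reference circumcenter through the reference centroid). Part (c) then follows by summing the nine-point-center midpoints $\tfrac{1}{2}(O_i+H_i)$, which requires an auxiliary computation showing that the four face circumcenters sum (after normalization) to a multiple of the reference circumcenter; I would prove that lemma alongside (d). For (g), the $X_{53}$ barycentric is of type $a^2/(b^2+c^2-a^2)$, which again becomes tractable after the orthocentric collapse.

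The main obstacle is parts (e) and (f): showing that the circumcenter and Monge point of the four orthocenters lie on the reference Euler line. The plan is to compute these two special points of the central tetrahedron explicitly from the coordinates of the $H_i$, form the difference vector from the reference centroid, and verify that it is parallel to the direction of the reference Euler line (the line from the reference centroid through the reference circumcenter). This reduces to showing that a small matrix of coordinate differences has rank $1$, equivalently that certain $2\times 2$ determinants vanish identically as polynomials in $a_1,a_2,a_3,t$. The circumcenter of a tetrahedron is a quadratic-form expression in the edge squares and carries most of the algebraic weight; success depends on the orthocentric substitution producing enough cancellation that the requisite determinants factor transparently, which is precisely where mechanical computer verification is essentially indispensable.
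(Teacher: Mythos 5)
Your treatment of parts (a) and (b) is correct and complete. Under $b_i^2=t-a_i^2$ the four face orthocenters do acquire the coordinates $(0,v,w,z)$, $(u,0,w,z)$, $(u,v,0,z)$, $(u,v,w,0)$, the point $(u,v,w,z)$ manifestly lies on all four cevians, and clearing denominators gives the stated fourth coordinate; the isotomic case is the reciprocal pattern, and the sign you remark on in (b) is only a choice of homogeneous representative. This is in substance the paper's route: the paper proves the entire theorem by symbolic coordinate computation, and (a), (b) also follow from its general theorem on edges satisfying $h(a_i)+h(b_i)=t$ with $h(x)=x^2$ and $r=\mp 1$; your version has the merit of exhibiting the concurrence point explicitly by hand.

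The gap is in (c)--(g). For (e) and (f) you defer to machine computation, which is exactly what the paper does, so nothing is established there beyond the paper; but for (c), (d), (g) your plan contains steps that would actually fail. If you normalize each face orthocenter to exact tetrahedral coordinates and average --- the geometrically correct centroid, precisely as you propose --- you do not obtain the Monge point. Concretely, take $A_1=(0,0,0)$, $A_2=(4,0,0)$, $A_3=(1,3,0)$, $A_4=(1,1,2)$ (the apex lies over the orthocenter of the base, so the tetrahedron is orthocentric with $t=24$): the face orthocenters are $(4/3,4/3,4/3)$, $(8/11,12/11,12/11)$, $(1,3/5,6/5)$, $(1,1,0)$, whose centroid is $(67/66,\,166/165,\,299/330)$, whereas the Monge point is $(1,1,1)$; the auxiliary lemma you propose for (c), that the normalized face circumcenters average to the reference circumcenter, also fails in this same example (first coordinate $\approx 1.74$ versus $2$). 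What the coordinates actually support is different: summing the homogeneous representatives $(0,v,w,z)+(u,0,w,z)+(u,v,0,z)+(u,v,w,0)=3(u,v,w,z)$ reproduces the concurrence point of part (a), and that point \emph{is} the reference Monge point, because in an orthocentric tetrahedron the foot of each altitude is the orthocenter of the opposite face, so your cevians are the altitudes and they meet at the orthocenter, i.e.\ the Monge point. So the statement your machinery can prove in the direction of (d) is ``the cevians to the orthocenters concur at the Monge point,'' not ``the normalized average of the four orthocenters is the Monge point''; parts (c), (d), (g) as you read them (exact coordinates summed and divided by four) are contradicted by the example above, and your normalize-then-average argument cannot go through without either switching to the unnormalized-representative reading or reformulating those parts.
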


\begin{theorem}
For an orthocentric tetrahedron,\\
(a) The normals at the circumcenters concur.\\
(b) The normals at the centroids concur.\\
(c) The normals at the orthocenters concur.\\
(d) The normals at the nine point centers concur.\\
(e) The normals at the De Longchamps points concur.
\end{theorem}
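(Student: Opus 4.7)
The plan is to reduce all five parts to two ingredients: the classical identification, in an orthocentric tetrahedron, of each altitude's foot with the orthocenter of the opposite face, and an elementary ``parallel lift'' observation that handles any face center expressible as a fixed affine combination of the in-face orthocenter and circumcenter.

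First I would establish the key identification: in an orthocentric tetrahedron the foot $H_i$ of the altitude from $A_i$ to face $i$ is the orthocenter of that face. For $i=1$: the altitude from $A_1$ is perpendicular to face $A_2A_3A_4$ and hence to edge $A_3A_4$, while the orthocentric hypothesis gives $A_1A_2\perp A_3A_4$; the plane through $A_1$ containing these two concurrent lines is therefore normal to $A_3A_4$, so its intersection with the face is a line through $A_2$ and $H_1$ perpendicular to $A_3A_4$, i.e.\ the altitude of the face from $A_2$. Applying the same argument with $A_1A_3\perp A_2A_4$ shows $H_1$ lies on a second altitude of the face, so $H_1$ is its orthocenter. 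Since the four altitudes of an orthocentric tetrahedron concur at its orthocenter $H$, and the normal to face $i$ at $H_i$ is exactly the altitude from $A_i$, part (c) follows. Part (a) requires no orthocentric hypothesis: the earlier theorem of Section~\ref{section:tetrahedra} already gives that the normal at the face circumcenter $O_i$ passes through the reference circumcenter $O$.

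For parts (b), (d), (e) I would apply the following ``lift'' principle. Let $\vec n_i$ be a unit normal to face $i$. From (a) and (c) write $O = O_i + \sigma_i\,\vec n_i$ and $H = H_i + \tau_i\,\vec n_i$ for suitable scalars. If a face center $P_i$ is a fixed affine combination $P_i=\alpha H_i+(1-\alpha)O_i$ with $\alpha$ the same for every face, then
\[
\alpha H + (1-\alpha)O \;=\; P_i + \bigl(\alpha\tau_i+(1-\alpha)\sigma_i\bigr)\vec n_i,
\]
which lies on the normal to face $i$ at $P_i$. Since $\alpha H+(1-\alpha)O$ does not depend on $i$, the four normals concur at this point. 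In any triangle, the centroid, nine-point center, and De Longchamps point lie on the Euler line as affine combinations of orthocenter and circumcenter with $\alpha=1/3$, $1/2$, and $-1$ respectively, so (b), (d), and (e) follow, with concurrency points $(2O+H)/3$, $(O+H)/2$, and $2O-H$.

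The only real obstacle is the first step: pinning down that $H_i$ is indeed the foot of the altitude from $A_i$; after that, everything is just linear algebra along four parallel normal lines. Alternatively, one could mimic the paper's mechanical method and carry out all five verifications directly in tetrahedral coordinates after imposing $a_i^2+b_i^2=t$; the synthetic sketch above is attractive because it \emph{explains} why the concurrency points take the clean form $\alpha H+(1-\alpha)O$ and generalises immediately to any other face center that sits at a fixed affine position on the face's Euler line.
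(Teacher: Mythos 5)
Your proof is correct, and it takes a genuinely different route from the paper's. The paper's proof of this theorem is purely mechanical: the face centers are written down in tetrahedral coordinates with the constraint $a_i^2+b_i^2=t$ imposed, and Mathematica verifies symbolically, case by case, that the four normals concur; the paper only remarks afterwards that the five centers ``lie on the Euler line and have constant ratio distances apart,'' without using that fact. You turn that remark into the proof: (a) is the paper's own arbitrary-tetrahedron result (Section \ref{section:tetrahedra}) that the normals at the face circumcenters meet at the reference circumcenter $O$; (c) rests on the classical facts that in an orthocentric tetrahedron the altitudes concur at $H$ and that the foot of each altitude is the orthocenter of the opposite face (your synthetic derivation of the latter is sound, and the former is cited by the paper itself from Altshiller-Court); and (b), (d), (e) then follow from your parallel-lift observation, since along the common normal direction of face $i$ the point $\alpha H+(1-\alpha)O$ sits on the normal at $P_i=\alpha H_i+(1-\alpha)O_i$, with $\alpha=\tfrac13,\tfrac12,-1$ for the centroid, nine-point center, and De Longchamps point. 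What the computer proof buys is uniformity and freedom from classical lemmas — it would work just as well for a center not on the Euler line. What your argument buys is explanation and generality: it produces the explicit concurrency points $(H+2O)/3$, $(O+H)/2$, $2O-H$, and it shows at once that the conclusion holds for \emph{every} triangle center occupying a fixed affine position on the face's Euler line, not just the five listed.
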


Note that these five centers lie on the Euler line and have constant
ratio distances apart.

\begin{theorem}
For an orthocentric tetrahedron,
the lines to the following triangle centers form a hyperbolic group:\\
(a) circumcenters\\
(b) Crucial points (and their inverses)\\
(c) X25 points\\
(d) X48 points (and their inverses)
\end{theorem}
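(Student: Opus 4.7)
The plan is to verify each of (a)--(d) by symbolic computation in the tetrahedral coordinate system of Section 2, following the same coordinate-geometry spirit used throughout the paper. For each of the four listed triangle centers I would take the trilinear coordinates from \cite{KimberlingB}, clear angle functions by the substitutions $\sin A = 2K/bc$ and $\cos A = (b^2+c^2-a^2)/(2bc)$, convert to an areal form $f(a_1,a_2,a_3)$, and apply the face-mapping formulas at the end of Section 2 to obtain explicit tetrahedral coordinates for the face-centers $P_1,P_2,P_3,P_4$ as functions of the six edge lengths $a_1,a_2,a_3,b_1,b_2,b_3$. Heron's formula is then used to eliminate the residual $K$ appearing in the square-roots.

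With the four cevians $L_i=\overline{A_iP_i}$ expressed in Pl\"ucker form, I would first check that they are pairwise skew under the generic orthocentric constraints $a_i^2+b_i^2=t$, so that ``hyperbolic group'' is the correct conclusion rather than concurrence or coplanarity. As the paper's methodology recommends, a preliminary numerical test on a specific orthocentric tetrahedron can rule out such degeneracies before the symbolic run and can also verify that the general conclusion is at least plausible.

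The core step uses the classical fact that three pairwise skew lines determine a unique doubly-ruled quadric, so the question of whether four lines belong to a common regulus reduces to whether every common transversal of three of them meets the fourth. I would parametrize the one-parameter family of common transversals of $L_1,L_2,L_3$ --- for instance, by choosing a variable point on $L_1$ and drawing the unique line through it meeting both $L_2$ and $L_3$ --- and then form the $4\times 4$ coplanarity determinant that expresses ``the transversal $T(s)$ meets $L_4$.'' This determinant is a polynomial in the parameter $s$, and the hyperbolic-group property holds iff every coefficient of that polynomial vanishes identically. Each coefficient is a polynomial in $a_1,a_2,a_3,b_1,b_2,b_3$, and the verification reduces to showing these polynomials lie in the ideal $\langle a_i^2+b_i^2-t : i=1,2,3\rangle$, which Mathematica can confirm directly by substituting $b_i^2\mapsto t-a_i^2$ and simplifying.

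The main obstacle is algebraic bulk: the trilinears for X25 and X48 involve $\tan^2 A$ and degree-three polynomials in $\cos A$, and once these are expanded and Heron's formula is invoked, the resulting cevian Pl\"ucker vectors become very large, as do the coefficients of $s$ in the coplanarity polynomial. The orthocentric relations $a_i^2+b_i^2=t$ supply precisely the cancellations that make the identities hold, but no conceptual reason for singling out these four centers --- as opposed to, say, the incenter or X7 --- is apparent from the computation; the mechanical verification is what establishes the theorem, and the practical difficulty is simply arranging the CAS computation so that the final reduction is feasible rather than swamped by expression swell.
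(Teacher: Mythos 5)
Your plan is logically sound and would prove the theorem, but it takes a noticeably heavier route than the paper. The paper's proof is also a mechanical coordinate verification, but it does not test regulus membership by parametrizing all common transversals of three cevians and forcing a coplanarity determinant to vanish identically in the parameter $s$. Instead it invokes Altshiller-Court's proposition that the four cevians form a hyperbolic group as soon as a spear line through each vertex meets the three cevians from the other vertices, and by the four-fold symmetry of the face labelling only one spear line need be checked; this collapses the whole test to a single polynomial identity in the six edge lengths, the ``Hyperbolic Condition''
$$F(b_2,a_1,b_3)F(b_3,b_1,a_2)F(b_1,a_3,b_2)=F(b_3,b_2,a_1)F(b_1,a_2,b_3)F(b_2,b_1,a_3),$$
which the program verifies under $a_i^2+b_i^2=t$. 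Moreover, the paper's Section on hyperbolic lines supplies exactly the conceptual reason you say is absent: each of the four centers has areal center function of the form $a^r(b^2+c^2-a^2)^{\pm 1}$, and the general theorem that centers with center function $[h(b)+h(c)-h(a)]^q$ are hyperbolic whenever the edges satisfy $h(a_i)+h(b_i)=t$ (take $h(x)=x^2$), combined with the closure theorem that $a^rF(a,b,c)^q$ is hyperbolic whenever $F$ is (and hence so are isogonal and isotomic conjugates), yields (a)--(d) with trivial algebra and no expression swell. Your approach buys a self-contained verification straight from the definition (regulus membership plus an explicit pairwise-skewness check, which the paper glosses over since skewness is part of its definition of a hyperbolic group), at the cost of much larger symbolic objects; the paper's approach buys a one-line certificate per center and a uniform explanation of why precisely these centers appear for orthocentric tetrahedra.
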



\section{Results found for Harmonic Tetrahedra}
\label{section:harmonic}

The following results were discovered and proven by our computer program.


\begin{theorem}
For a harmonic tetrahedron,\\
(a) The Feuerbach points are coplanar.\\
(b) The cevians to the X117 points (and their isotomic conjugates, the X102 points) concur.
\end{theorem}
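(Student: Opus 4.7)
The plan is to follow the coordinate template developed in Section~2. The Feuerbach point $X_{11}$ has barycentric coordinates
$$\bigl((b+c-a)(b-c)^2:(c+a-b)(c-a)^2:(a+b-c)(a-b)^2\bigr),$$
so writing $f(a,b,c)=(b+c-a)(b-c)^2$ I would apply the face-by-face rule in Section~2 to record the tetrahedral coordinates of the four face-Feuerbach points $F_1,F_2,F_3,F_4$ of the reference tetrahedron. For part (b), I would look up the trilinears of $X_{117}$ in Kimberling's tables, convert to barycentrics, strip out the angles via $\sin A=2K/(bc)$ and $\cos A=(b^2+c^2-a^2)/(2bc)$ as the paper prescribes, and then apply the same face template to produce four vectors $Q_1,Q_2,Q_3,Q_4$.

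For part (a), four points are coplanar iff the $4\times 4$ matrix of their tetrahedral coordinates is singular. I would compute this determinant after enforcing $1/a_i+1/b_i=k$ through the substitution $b_i=a_i/(ka_i-1)$ (or the symmetric parametrization $a_i=1/u_i$, $b_i=1/(k-u_i)$) and verify that it vanishes identically in the remaining variables.

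For part (b), the cevians $A_iQ_i$ concur iff there is a point $Q=(q_1,q_2,q_3,q_4)$ such that for every $i$ the triple $(q_j)_{j\neq i}$ is proportional to the non-$i$ coordinates of $Q_i$; indeed a general point of the cevian differs from $Q_i$ only in coordinate $i$. Taking $i=4$ fixes $(q_1:q_2:q_3)$, after which the conditions for $i=1,2,3$ each yield an expression for $q_4$, and these three expressions must coincide. I would compute them and, after applying the harmonic substitution, verify that they agree. For the X102 points one repeats the entire calculation with each face coordinate replaced by its reciprocal, which is precisely isotomic conjugation within the face.

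The main obstacle is expression swell: the Feuerbach and X117 coordinates are polynomials of moderate degree in the six edge lengths, and the harmonic substitution converts each entry into a rational function with denominator $\prod_i(ka_i-1)$. Clearing these denominators rowwise in the determinant of part (a), and across the ratio tests of part (b), then simplifying the resulting polynomial identity in $a_1,a_2,a_3,k$, is precisely the task handled by the author's Mathematica pipeline; by hand it requires careful bookkeeping but no conceptual step beyond the setup in Section~2.
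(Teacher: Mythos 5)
Your plan is essentially the paper's own proof: the harmonic-tetrahedron theorem is established there by exactly this computation --- place the face centers via the Section~2 template, impose $1/a_i+1/b_i=\text{const}$, and verify the coplanarity determinant and the full cevian-concurrence conditions symbolically (the Mathematica pipeline you describe). The only remark worth adding is that the paper's later general results let you skip the expression swell: part (b) is the $h(a_i)+h(b_i)=t$ concurrence theorem with $h(x)=1/x$ applied to the areal center function $1/b+1/c-1/a$ (and its reciprocal, giving the isotomic conjugates), and part (a) follows from the Feuerbach planarity determinant, whose first row becomes $k$ times the second when $a_i+b_i=k\,a_ib_i$.
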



\begin{theorem}
For a harmonic tetrahedron,
the lines to the following triangle centers form a hyperbolic group:\\
(a) X43 points (and their inverses)\\
(b) X102 points\\
(c) X117 points\\
\end{theorem}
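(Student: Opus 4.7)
The plan is to carry out, for each of X43 (together with its inverse), X102, and X117, the same coordinate-based verification that proves the earlier hyperbolic-group theorems in this paper. For each center, I would start from its Kimberling trilinear, clear trigonometry using $\sin A = 2K/bc$ and $\cos A = (b^2+c^2-a^2)/(2bc)$, and then clear the remaining $K$'s from denominators so that the resulting center function $f(a,b,c)$ is a polynomial in $a,b,c,K$ with $K^2$ reducible to the Heron polynomial. I would then apply the face substitutions from Section 2 to obtain the tetrahedral coordinates
\begin{align*}
P_1 &= (0,\, f(b_3,b_2,a_1),\, f(b_2,a_1,b_3),\, f(a_1,b_3,b_2)),\\
P_2 &= (f(b_3,b_1,a_2),\, 0,\, f(b_1,a_2,b_3),\, f(a_2,b_3,b_1)),\\
P_3 &= (f(b_2,b_1,a_3),\, f(b_1,a_3,b_2),\, 0,\, f(a_3,b_2,b_1)),\\
P_4 &= (f(a_1,a_2,a_3),\, f(a_2,a_3,a_1),\, f(a_3,a_1,a_2),\, 0),
\end{align*}
and form the four cevians $L_i = A_iP_i$.

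Next I would encode the hyperbolic-group test algebraically. Four pairwise skew lines form a hyperbolic group exactly when they lie on a single ruled quadric as members of one ruling, so I would take the unique hyperboloid $Q$ determined by the three skew cevians $L_1,L_2,L_3$ and check that $L_4 \subset Q$. A point of $L_i$ is $s\, A_i + t\, P_i$; because $A_i$ is the $i$-th basis vector and $(P_i)_i = 0$, each cevian has a clean two-parameter description in tetrahedral coordinates, and membership of $L_4$ in $Q$ becomes a system of polynomial identities $\Delta(a_1,a_2,a_3,b_1,b_2,b_3) = 0$ that must hold modulo the harmonic ideal. Equivalently, one may use the transversal formulation: show that a line meeting $L_1,L_2,L_3$ automatically meets $L_4$, which is exactly the condition behind Lee's proposition cited earlier.

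To impose the harmonic relations $1/a_i + 1/b_i = c$ for $i=1,2,3$, I would substitute $b_i = a_i/(c\,a_i - 1)$, clear denominators, and verify that the resulting polynomial in $a_1,a_2,a_3,c$ vanishes identically. The four-fold face-labelling symmetry established in Section 2 makes the whole setup invariant under $A_1 \to A_2 \to A_3 \to A_4$, so the verification needs to be done only once per center rather than four times. As a by-product, Lee's construction then yields the common hyperboloid's center and can be compared with the space centers of the reference tetrahedron.

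The main obstacle will be the algebraic bulk of the X102 and X117 cases. Both are circumcircle points whose trilinears involve the Heron square root, so even after the rationalization steps in Section 4 the resulting face coordinates are polynomials of fairly high total degree in $a_j$ and $b_j$; the determinantal identities $\Delta = 0$ become unwieldy if expanded naively. The practical trick is to impose the harmonic substitution \emph{before} expanding any determinant, so that the degree in the $a_j$'s collapses at each intermediate step. I would test the pipeline first on X43, whose trilinear is purely polynomial and yields a comparatively light computation, and then rerun the same Mathematica script on X102 and X117 to confirm the identities in those cases.
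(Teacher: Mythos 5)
Your overall plan---symbolic verification in tetrahedral coordinates under the harmonic substitution---is the same computational strategy the paper relies on, but the specific test you propose is both heavier than the paper's and, as stated, breaks down for two of the three centers. The paper does not check that $L_4$ lies on the quadric through $L_1,L_2,L_3$; it uses the spear-line condition of Section \ref{section:hyperbolicLines} ($z_1x_2y_3=y_1z_2x_3$) together with the Altshiller-Court proposition that a spear line through each vertex forces a hyperbolic group, and the four-fold symmetry collapses the whole verification to the single identity $F(b_2,a_1,b_3)F(b_3,b_1,a_2)F(b_1,a_3,b_2)=F(b_3,b_2,a_1)F(b_1,a_2,b_3)F(b_2,b_1,a_3)$. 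The proposition of Lee that you invoke is not a transversal criterion at all: it locates the \emph{center} of the hyperboloid via a parallelepiped construction. More seriously, your construction presupposes that $L_1,L_2,L_3$ are pairwise skew, but for the X102 and X117 points of a harmonic tetrahedron the four cevians actually \emph{concur} (Section \ref{section:harmonic}, part (b), and the corollary on $(1/b+1/c-1/a)^r$ in Section \ref{section:concurrentCevians}), so ``the unique hyperboloid determined by the three skew cevians'' does not exist and your membership test degenerates exactly where you need it; the algebraic spear-line condition is insensitive to this degeneracy, which is why it is the right formulation to verify.

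The computational obstacle you anticipate is also illusory. For the centers as this paper uses them (see the note following the theorem), $X43=1/b+1/c-1/a$, $X102=a(1/b+1/c-1/a)$, and $X117=(1/b+1/c-1/a)/a$, so all three (and their conjugates) have center functions of the form $a^r(1/b+1/c-1/a)$---no Heron square roots enter. Consequently no bulky elimination is needed: apply the general theorem of Section \ref{section:hyperbolicLines} with $h(x)=1/x$ (the harmonic condition is $1/a_i+1/b_i=t$), combine it with the theorem that $a^rF(a,b,c)^q$ is hyperbolic whenever $F$ is, and use the isotomic/isogonal-conjugate corollaries to cover the ``inverses.'' That is precisely the paper's corollary asserting that $a^r(1/b+1/c-1/a)$ is a hyperbolic center function for harmonic tetrahedra; your pipeline, once repaired as above, would only be re-verifying a special case of it by brute force.
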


\medskip
Note that $X43=1/b+1/c-1/a$, $X102=a(1/b+1/c-1/a)$, and $X117=(1/b+1/c-1/a)/a$.


\section{General Results about Concurrent Cevians}
\label{section:concurrentCevians}

The data collected by our program suggested (but did not prove) the following results.
Thus, independent proofs are needed.

\begin{lemma}
Let $P_1=(0,y_1,z_1,w_1)$ and $P_2=(x_2,0,z_2,w_2)$
be two points on faces 1 and 2 of the reference tetrahedron.
Then the condition that the lines $A_iP_i$, $i=1,2$ intersect (or be parallel) is
$$\frac{z_1}{w_1}=\frac{z_2}{w_2}.$$
\end{lemma}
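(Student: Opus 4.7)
The plan is to observe that two lines in three-space are either skew or coplanar, and that ``coplanar'' is exactly the disjunction ``intersect or parallel.'' So the condition on $A_1P_1$ and $A_2P_2$ can be rephrased as: the four points $A_1$, $A_2$, $P_1$, $P_2$ lie in a common plane.

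In tetrahedral coordinates, four points are coplanar precisely when the $4\times 4$ determinant formed by their (homogeneous) coordinates vanishes; this is the standard tetrahedral analogue of the areal collinearity criterion in Appendix \ref{section:tetrahedralCoordinates}. The vertices $A_1$ and $A_2$ have the especially simple coordinates $(1,0,0,0)$ and $(0,1,0,0)$, so I would just write
\[
\det\begin{pmatrix} 1 & 0 & 0 & 0 \\ 0 & 1 & 0 & 0 \\ 0 & y_1 & z_1 & w_1 \\ x_2 & 0 & z_2 & w_2 \end{pmatrix}
\]
and expand. Laplace expansion along the first two rows (or, equivalently, cofactor expansion along the first row followed by the first column of what remains) collapses the calculation to the single $2\times 2$ minor $\det\bigl(\begin{smallmatrix} z_1 & w_1 \\ z_2 & w_2 \end{smallmatrix}\bigr)=z_1w_2-w_1z_2$. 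Setting this to zero gives the stated criterion $z_1/w_1=z_2/w_2$.

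There is essentially no hard step here; the only thing that needs a word of justification is why the coplanarity determinant legitimately captures the ``parallel'' case as well as the ``intersect'' case. In tetrahedral (barycentric) coordinates the coplanarity determinant is a projective invariant, and parallel lines are just lines meeting at a point of the plane at infinity; coplanarity of four points is detected by the determinant regardless of whether the common plane is finite or at infinity, so both cases are captured uniformly. This is the only subtlety worth flagging; the rest is a one-line determinant evaluation.
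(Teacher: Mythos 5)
Your proof is correct and essentially the same as the paper's: the paper applies its Formulas 7 and 11 to get the vanishing of the $4\times 4$ determinant with rows $P_1$, $P_2$, $(1,0,0,0)$, $(0,1,0,0)$, which is exactly your four-point coplanarity determinant up to a row permutation, and it reduces to $z_1w_2=w_1z_2$ just as in your expansion. Your phrasing via coplanarity of $A_1,A_2,P_1,P_2$ (Formula 2) rather than the line-intersection criterion is only a cosmetic difference.
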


\begin{proof}
From formulas 7 and 11, the condition is
$$
\begin{vmatrix}
0&y_1&z_1&w_1\\
x_2&0&z_2&w_2\\
1&0&0&0\\
0&1&0&0\\
\end{vmatrix}
= 0.
$$
which reduces to the formula claimed.
\end{proof}

\textbf{Geometric interpretation.}
Let $P_1$ and $P_2$ be points on the faces opposite vertices $A_1$ and $A_2$,
respectively, of tetrahedron $A_1A_2A_3A_4$. Then lines $A_1P_1$ and
$A_2P_2$ intersect if and only if
$$[P_1A_2A_3][P_2A_4A_1]=[P_1A_2A_4][P_2A_1A_3].$$
Here ``intersect'' also includes being parallel.

Figure \ref{fig:intersectingCevians} shows a top view of tetrahedron $A_1A_2A_3A_4$
with a point taken on faces 1 and 2.
The condition is that the product of the areas of the
yellow triangles equals the product of the areas of the
green triangles.

\begin{figure}[h!t]
\centering
\includegraphics[width=0.7\linewidth]{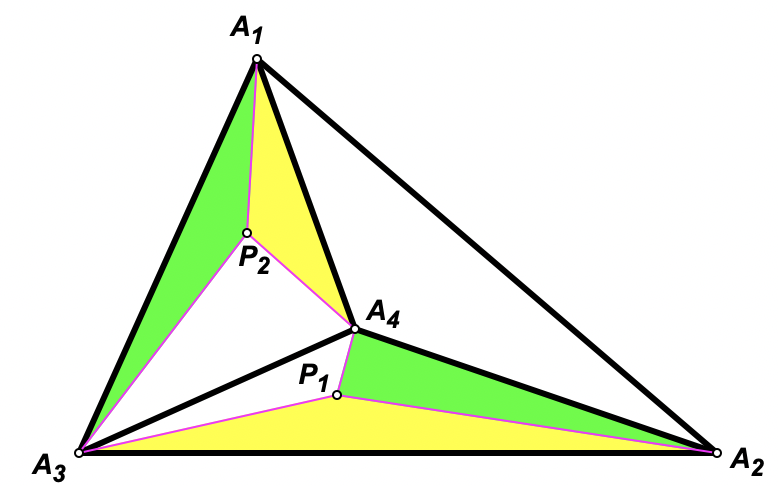}
\caption{$A_1P_1$ intersects $A_2P_2$ if product of yellow areas equals product of green areas}
\label{fig:intersectingCevians}
\end{figure}

\begin{lemma}
If $A_iP_i$, $i=1,2,3$, meet in pairs, then all three lines meet at a point.
\end{lemma}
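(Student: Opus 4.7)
The plan is to reduce the three-cevian concurrency to coordinate algebra in the spirit of the preceding lemma, and then exhibit the common point in closed form.

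First I would generalize that lemma symmetrically to the other two face-pairs. With $P_1=(0,y_1,z_1,w_1)$, $P_2=(x_2,0,z_2,w_2)$, and $P_3=(x_3,y_3,0,w_3)$, the same $4\times 4$ determinantal computation yields the three pairwise-intersection conditions
\[
\frac{z_1}{w_1}=\frac{z_2}{w_2},\qquad \frac{y_1}{w_1}=\frac{y_3}{w_3},\qquad \frac{x_2}{w_2}=\frac{x_3}{w_3},
\]
all of which hold by hypothesis.

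Next I would observe that a generic point on $L_i=A_iP_i$ has the form $\lambda A_i+\mu P_i$, so in tetrahedral coordinates points of $L_1$ satisfy $y{:}z{:}w=y_1{:}z_1{:}w_1$, points of $L_2$ satisfy $x{:}z{:}w=x_2{:}z_2{:}w_2$, and points of $L_3$ satisfy $x{:}y{:}w=x_3{:}y_3{:}w_3$. Demanding all three constraints at a single point pins $x/w=x_2/w_2$, $y/w=y_1/w_1$, $z/w=z_1/w_1$, and the three displayed conditions are exactly what is needed for these pinned values to be consistent across all three lines. This produces the candidate common point
\[
Q=(x_2 w_1 w_3,\;y_1 w_2 w_3,\;z_1 w_2 w_3,\;w_1 w_2 w_3),
\]
and a direct substitution verifies $Q\in L_1\cap L_2\cap L_3$.

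The main obstacle is the degenerate case where one of $w_1,w_2,w_3$ vanishes, i.e., some $P_i$ lies on face $4$, which breaks the above normalization. For that I would fall back on a short synthetic argument: since $L_1$ and $L_2$ meet they span a plane $\pi$, and if $L_3\not\subset\pi$ then $L_3$ pierces $\pi$ in a unique point which must simultaneously serve as $L_3\cap L_1$ and $L_3\cap L_2$, and therefore also lie in $L_1\cap L_2$. The remaining sub-case $L_3\subset\pi$ forces all three cevians into the plane of face $4$, which can be disposed of separately or ruled out at the outset by requiring the $P_i$ to be interior face points.
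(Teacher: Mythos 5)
Your proposal is correct, but its main route differs from the paper's. The paper's proof is a three-line synthetic argument: if the three cevians did not concur, their three pairwise intersection points would be distinct, so the three lines would all lie in the plane through those points; that plane would then contain $A_1$, $A_2$, $A_3$ and hence be the base plane, which is impossible when the $P_i$ do not lie in the plane of face $4$. Your ``fallback'' argument ($L_1$ and $L_2$ span a plane $\pi$; if $L_3\not\subset\pi$ its unique meeting point with $\pi$ must be both $L_3\cap L_1$ and $L_3\cap L_2$, hence lies on $L_1\cap L_2$) is essentially a cleaner rendering of exactly this idea, and it alone proves the lemma in every case the paper considers, so strictly speaking your coordinate computation is not needed. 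What the coordinate route buys is an explicit concurrency point, $Q=(x_2w_1w_3,\;y_1w_2w_3,\;z_1w_2w_3,\;w_1w_2w_3)$, correctly verified from the three ratio conditions (which agree with the corollary the paper states immediately after this lemma), and such explicit fourth coordinates are indeed what the paper reports later for Gergonne, Nagel and orthocenter cevians; the price is the case split at $w_i=0$. One caution: the residual sub-case $L_3\subset\pi$ (all three $P_i$ on the edges of face $4$) is not merely a nuisance but a genuine exception to the statement --- three cevians of triangle $A_1A_2A_3$ can meet pairwise without concurring --- so it must be excluded by hypothesis (e.g., $P_i$ interior to its face), an assumption the paper's own proof also makes tacitly.
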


\begin{proof}
Let the lines meet in pair at points $Q_1$, $Q_2$, $Q_3$. Then the plane
through $Q_1$, $Q_2$, and $Q_3$ contains the three lines and hence
the three vertices $A_1$, $A_2$, $A_3$. Thus $A_1$, $A_2$, $A_3$ would
lie on a plane other than the base plane, a contradiction.
\end{proof}

\begin{corollary}
Let $P_1=(0,y_1,z_1,w_1)$, $P_2=(x_2,0,z_2,w_2)$, $P_3=(x_3,y_3,0,w_3)$
be three points on faces 1, 2, and 3 of the reference tetrahedron.
Then the condition that the lines $A_iP_i$, $i=1,2,3$ concur (or be parallel) is
$$\frac{z_1}{w_1}=\frac{z_2}{w_2},\qquad\frac{y_1}{w_1}=\frac{y_3}{w_3},\qquad
\frac{x_2}{w_2}=\frac{x_3}{w_3}.$$
\end{corollary}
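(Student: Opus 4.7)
The plan is to apply the first lemma of this section three times, once to each of the pairs $(A_1P_1,A_2P_2)$, $(A_1P_1,A_3P_3)$, and $(A_2P_2,A_3P_3)$, and then to invoke the second lemma to assemble the three pairwise intersections into a single point of concurrence.

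First I observe that the $4\times 4$ determinant computation in the first lemma is symmetric in the following sense: the condition for the cevians $A_iP_i$ and $A_jP_j$ to meet is that the ratio of the two tetrahedral coordinates of $P_i$ indexed by the vertices outside $\{A_i,A_j\}$ equals the corresponding ratio for $P_j$. For the pair $(A_1,A_2)$ the outside vertices are $A_3,A_4$, which produces the stated condition $z_1/w_1 = z_2/w_2$. Repeating the calculation with $A_3$ (respectively $A_2$) playing the role of $A_2$ yields the analogous condition for the pair $(A_1,A_3)$ (respectively $(A_2,A_3)$); these are precisely $y_1/w_1 = y_3/w_3$ and $x_2/w_2 = x_3/w_3$, the remaining two equalities in the statement.

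With all three equalities in hand, each pair of cevians meets (or is parallel). The second lemma then forces the three cevians to share a common point. Conversely, if the three cevians concur (or are mutually parallel), then in particular each pair meets, so each of the three ratio equalities must hold by the first lemma. This establishes the biconditional asserted by the corollary.

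The main obstacle I anticipate is purely bookkeeping: keeping track of which coordinate of each $P_i$ plays the role of the ``free direction'' in each of the three applications of the first lemma, so that the three conditions come out with exactly the subscripts claimed. The parallel case introduces no extra work, since it is already absorbed into the vanishing-determinant formulation of the first lemma when read projectively.
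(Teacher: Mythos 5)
Your argument is exactly the one the paper intends: the corollary is stated as an immediate consequence of the two preceding lemmas, obtained by applying the pairwise intersection criterion to the pairs $(A_1P_1,A_2P_2)$, $(A_1P_1,A_3P_3)$, $(A_2P_2,A_3P_3)$ (the determinant indeed reduces each time to the $2\times 2$ minor in the coordinates indexed by the two omitted vertices, giving the three stated ratios) and then invoking the lemma that pairwise-meeting cevians must concur. Your bookkeeping of the subscripts is correct, so the proposal matches the paper's route.
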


\begin{corollary}[\textbf{The Concurrence Condition}]
The condition for the concurrence of cevians to two centers (each with center function $F(a,b,c)$)
on faces
1 and 2 of the reference tetrahedron is
$$F(b_2,a_1,b_3)F(a_2,b_3,b_1)=F(a_1,b_3,b_2)F(b_1,a_2,b_3).$$
\end{corollary}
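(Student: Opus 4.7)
My plan is to combine the preceding Lemma with the explicit coordinate formulas for face centers given in Section \ref{section:faceCenters}. Recall from there that a triangle center with areal coordinates $(F(a_1,a_2,a_3),F(a_2,a_3,a_1),F(a_3,a_1,a_2))$ sits on face 1 of the reference tetrahedron at tetrahedral coordinates
$$P_1=(0,\;F(b_3,b_2,a_1),\;F(b_2,a_1,b_3),\;F(a_1,b_3,b_2))$$
and on face 2 at
$$P_2=(F(b_3,b_1,a_2),\;0,\;F(b_1,a_2,b_3),\;F(a_2,b_3,b_1)).$$

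The next step is simply to read off the entries labelled $z_i$ and $w_i$ in the Lemma: for $P_1$ we have $z_1=F(b_2,a_1,b_3)$ and $w_1=F(a_1,b_3,b_2)$, and for $P_2$ we have $z_2=F(b_1,a_2,b_3)$ and $w_2=F(a_2,b_3,b_1)$. The Lemma says the cevians $A_1P_1$ and $A_2P_2$ meet (or are parallel) if and only if $z_1/w_1=z_2/w_2$; clearing denominators immediately gives the claimed identity
$$F(b_2,a_1,b_3)\,F(a_2,b_3,b_1)=F(a_1,b_3,b_2)\,F(b_1,a_2,b_3).$$

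There is essentially no obstacle, since the argument is a pure substitution. The only step requiring genuine care is matching the arguments $(a,b,c)$ of the center function $F$ to the tetrahedron edges on each face in accordance with the 4-fold symmetric convention established in Section \ref{section:faceCenters}; once that bookkeeping is done correctly (so that the ``$z$'' slot on face 1 is $F(b_2,a_1,b_3)$ and the ``$z$'' slot on face 2 is $F(b_1,a_2,b_3)$, and not some other cyclic variant), the identity drops out of the Lemma with no further calculation.
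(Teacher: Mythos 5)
Your proposal is correct and is exactly the argument the paper intends: the corollary is an immediate substitution of the face-center coordinates from Section \ref{section:faceCenters} (so $z_1=F(b_2,a_1,b_3)$, $w_1=F(a_1,b_3,b_2)$, $z_2=F(b_1,a_2,b_3)$, $w_2=F(a_2,b_3,b_1)$) into the Lemma's condition $z_1/w_1=z_2/w_2$. The bookkeeping of which edge lengths fill which argument slots is the only delicate point, and you have it right.
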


\begin{theorem}
If $P_1$ and $P_2$ are points on faces 1 and 2 of our reference tetrahedron
such that the cevians to $P_1$ and $P_2$ meet, then the cevians to the
isotomic conjugates of $P_1$ and $P_2$ meet.
\end{theorem}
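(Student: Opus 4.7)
The plan is to reduce the statement to a direct application of the first lemma in this section. That lemma tells us that, writing $P_1 = (0, y_1, z_1, w_1)$ and $P_2 = (x_2, 0, z_2, w_2)$ in tetrahedral coordinates, the hypothesis ``the cevians $A_1P_1$ and $A_2P_2$ meet (or are parallel)'' is exactly the equation
\[
\frac{z_1}{w_1} \;=\; \frac{z_2}{w_2}.
\]
Thus it suffices to compute the tetrahedral coordinates of the isotomic conjugates of $P_1$ and $P_2$ on their respective faces, and check that the analogous ratio identity still holds.

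The first step is therefore to translate between tetrahedral and areal coordinates using the dictionary established in Section~\ref{section:faceCenters}. Recall that a point with areal coordinates $(x_1,x_2,x_3)$ in face~1 has tetrahedral coordinates $(0,x_3,x_2,x_1)$, and a point with areal coordinates $(x_1,x_2,x_3)$ in face~2 has tetrahedral coordinates $(x_3,0,x_1,x_2)$. Inverting these dictionaries, the areal coordinates of $P_1$ on face~1 are $(w_1,z_1,y_1)$, and those of $P_2$ on face~2 are $(z_2,w_2,x_2)$.

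The second step is to take isotomic conjugates coordinatewise. Since the isotomic conjugate of a point with areal coordinates $(\alpha,\beta,\gamma)$ is $(1/\alpha,1/\beta,1/\gamma)$, the conjugate $P_1'$ has areal coordinates $(1/w_1,1/z_1,1/y_1)$ on face~1, hence tetrahedral coordinates $(0,\,1/y_1,\,1/z_1,\,1/w_1)$. Similarly $P_2'$ has tetrahedral coordinates $(1/x_2,\,0,\,1/z_2,\,1/w_2)$. The convenient observation here is that in tetrahedral coordinates, the isotomic conjugate on a face is obtained simply by inverting each nonzero coordinate.

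The final step is to apply the lemma once more, now to $P_1'$ and $P_2'$: the cevians $A_1P_1'$ and $A_2P_2'$ meet iff
\[
\frac{1/z_1}{1/w_1} \;=\; \frac{1/z_2}{1/w_2},
\]
which rearranges to $w_1/z_1 = w_2/z_2$, manifestly equivalent to the hypothesis $z_1/w_1 = z_2/w_2$. This closes the argument. The only genuinely delicate point is getting the index reshuffling in the tetrahedral-to-areal dictionary correct on each face, but once that bookkeeping is handled the statement follows in one line from the symmetry of the condition in the lemma under coordinatewise reciprocation.
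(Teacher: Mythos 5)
Your proof is correct and follows essentially the same route as the paper: both reduce the claim to the observation that the concurrence condition is invariant under coordinatewise reciprocation of the (areal) coordinates, since the isotomic conjugate of $(x,y,z)$ is $(1/x,1/y,1/z)$. The only cosmetic difference is that you apply the lemma directly to arbitrary points $P_1$, $P_2$ with explicit areal-to-tetrahedral bookkeeping (which you carry out correctly), whereas the paper invokes the Concurrence Condition corollary phrased for center functions; your version in fact matches the theorem's wording about arbitrary points slightly more closely.
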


\begin{proof}
In areal coordinates, the isotomic conjugate of $(x,y,z)$ is $(1/x,1/y,1/z)$.
The concurrence condition therefore becomes
$$\frac{1}{F(b_2,a_1,b_3)}\cdot \frac{1}{F(a_2,b_3,b_1)}=
\frac{1}{F(a_1,b_3,b_2)}\cdot \frac{1}{F(b_1,a_2,b_3)}$$
which is equivalent to the original condition.
\end{proof}

\begin{corollary}[{\cite[p.~139]{Altshiller}}]
If the four cevians to corresponding face centers concur,
then the four cevians to the isotomic conjugates of these centers also concur.
\end{corollary}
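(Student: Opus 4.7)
The plan is to bootstrap the pairwise statement just proved (the preceding theorem, which handles the isotomic conjugates of two face points) up to the four-cevian statement, using the earlier Lemma that three pairwise-meeting cevians from distinct vertices must actually concur. Suppose the four cevians $A_iP_i$, $i=1,\dots,4$, are concurrent at a common point $P$. Then every one of the six pairs of cevians meets, namely at $P$.

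I would next apply the preceding theorem to each of these six pairs to deduce that the six corresponding pairs of cevians to the isotomic conjugates $P_i'$ also meet. The theorem is written only for the pair of faces $(1,2)$, but the face-labelling scheme set up in Section~\ref{section:faceCenters} is entirely symmetric under the cyclic relabelling $A_1\to A_2\to A_3\to A_4$, so the same algebraic identity drives the conclusion for any pair of faces. Hence each of the four isotomic cevians $A_iP_i'$ meets each of the others.

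Finally, I would promote this pairwise intersection property to full four-fold concurrence. By the Lemma (three pairwise-meeting cevians from distinct vertices concur), the three lines $A_1P_1'$, $A_2P_2'$, $A_3P_3'$ meet at a single point $Q$. The fourth line $A_4P_4'$ meets each of $A_1P_1'$ and $A_2P_2'$; since these two distinct lines through $Q$ can share only the point $Q$ (two distinct lines in space have at most one common point), the line $A_4P_4'$ must pass through $Q$ as well. Therefore all four isotomic cevians concur at $Q$.

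The only obstacle is essentially bookkeeping: one has to verify that the preceding pairwise theorem, written for face pair $(1,2)$, transfers verbatim to every other face pair, and that the degenerate situations (parallel cevians, or an isotomic conjugate landing on a vertex so that two cevian lines coincide) are read, as in the Lemma and Corollary earlier in this section, projectively, with ``meet'' including parallelism and with $P_i$ assumed to lie in the interior of face $i$ so that the cevians are genuinely distinct lines. Under that standing convention the argument above is complete.
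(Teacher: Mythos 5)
Your overall route is the intended one: the paper gives no separate argument for this corollary, treating it as an immediate consequence of the preceding theorem (pairwise meeting of cevians is preserved under isotomic conjugation, and by the symmetry of the face labelling this holds for every pair of faces) together with the earlier Lemma that cevians meeting in pairs must concur. Your first two steps reproduce exactly that chain.

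The flaw is in your final step. From the facts that $A_4P_4'$ meets $A_1P_1'$ and meets $A_2P_2'$, and that those two lines intersect only at $Q$, it does not follow that $A_4P_4'$ passes through $Q$: a line can cross two intersecting lines at two distinct points, namely any line lying in their common plane, so the observation that two distinct lines share at most one point constrains $A_1P_1'\cap A_2P_2'$, not the fourth cevian. The finish is easy within your own framework: apply the Lemma a second time to the triple of faces $\{1,2,4\}$ (it transfers verbatim by the same relabelling symmetry you already invoked), so $A_1P_1'$, $A_2P_2'$, $A_4P_4'$ concur at some point $Q'$; since $Q'$ lies on both $A_1P_1'$ and $A_2P_2'$, it equals $Q$, and all four cevians pass through $Q$. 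Alternatively, rule out the bad case directly: if $A_4P_4'$ met the two lines at distinct points it would lie in their common plane, which contains $A_1$, $A_2$, and $A_4$, i.e.\ the plane of face 3; then $P_4'$ would have to lie on edge $A_1A_2$, contradicting the fact that the isotomic conjugate of a center has all areal coordinates nonzero. With that repair your proof is complete and coincides with the paper's (implicit) argument.
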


\begin{corollary}
If cevians to the triangle centers with center function $F(a,b,c)$ concur,
then so do cevians to the centers with center function $F(a,b,c)^r$ for any $r$.
\end{corollary}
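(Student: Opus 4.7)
The plan is to invoke the Concurrence Condition (Corollary 11.4) and observe that the identity it imposes on $F$ is purely multiplicative, hence is preserved under the substitution $F \mapsto F^r$.

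First I would verify that $F(a,b,c)^r$ really is a center function: its areal coordinates $\bigl(F(a,b,c)^r,\,F(b,c,a)^r,\,F(c,a,b)^r\bigr)$ still obey the cyclic symmetry required of a triangle center, so the corresponding $F^r$-center on each face of the reference tetrahedron is well-defined. This step is essentially formal.

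The main step is then the following. By the Concurrence Condition, the cevians from $A_1$ and $A_2$ to the $F$-centers on faces 1 and 2 meet if and only if
\[
F(b_2,a_1,b_3)\,F(a_2,b_3,b_1) \;=\; F(a_1,b_3,b_2)\,F(b_1,a_2,b_3),
\]
and the concurrence of all four cevians is captured by a finite collection of identities of this same shape, one for each adjacent pair of faces, with Lemma 11.3 supplying the remaining configurations once any three of the cevians are known to meet pairwise. Raising any such equality to the $r$-th power yields precisely the concurrence condition for the center function $F^r$ on the corresponding pair of faces. Therefore, the hypothesis that the four $F$-cevians concur transfers in one stroke to the $F^r$-cevians.

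There is no real obstacle; the only technical care needed is in interpreting $F^r$ for non-integer (or negative) $r$: one wants $F$ to be nonzero (and positive, if $r$ is not an integer) on the arguments that arise, so that $F^r$ is a real-valued function. For the power-point family $F(a,b,c)=a^r$ and the other positive center functions at which this corollary is intended to be applied, this is automatic, and the argument goes through uniformly.
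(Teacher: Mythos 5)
Your proposal is correct and matches the paper's intent: the paper states this corollary without a separate proof, treating it as immediate from the Concurrence Condition in exactly the way you argue (each pairwise concurrence identity is multiplicative, so raising it to the $r$-th power gives the corresponding identity for $F^r$), just as the preceding isotomic-conjugate theorem handles the case of reciprocals. Your added remarks on $F^r$ being a center function and on real-valuedness for non-integer $r$ are sensible but not needed beyond what the paper assumes.
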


\begin{theorem}
The centroid is the only triangle center with the property that in any
isosceles tetrahedron,
the cevians to these face centers concur.
\end{theorem}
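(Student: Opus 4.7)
The plan is to use the Concurrence Condition (Corollary~11.5) specialized to isosceles tetrahedra and show that it forces the center function to be fully symmetric. First, I would recall that the areal center function $F(a,b,c)$ of any Kimberling-style triangle center satisfies the bisymmetry relation $F(a,b,c)=F(a,c,b)$, inherited from the bisymmetry of the corresponding trilinear center function. Substituting $b_i=a_i$ (the defining condition of an isosceles tetrahedron) into
$$F(b_2,a_1,b_3)\,F(a_2,b_3,b_1)=F(a_1,b_3,b_2)\,F(b_1,a_2,b_3),$$
the identity collapses to
$$F(a_2,a_1,a_3)\,F(a_2,a_3,a_1)=F(a_1,a_3,a_2)\,F(a_1,a_2,a_3).$$
Applying bisymmetry to each factor gives $F(a_2,a_3,a_1)^2=F(a_1,a_2,a_3)^2$ for all admissible triples $(a_1,a_2,a_3)$.

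Next, I would extract cyclic invariance. On the (connected) open set of valid triangle side-triples, the continuous equation $F(a_2,a_3,a_1)=\varepsilon\,F(a_1,a_2,a_3)$ must hold with a constant sign $\varepsilon\in\{+1,-1\}$. Iterating the relation three times yields $F=\varepsilon^3 F$, which forces $\varepsilon^3=1$, hence $\varepsilon=+1$ (ruling out $\varepsilon=-1$, which would make $F$ identically zero and thus not a center). Therefore $F$ is invariant under the 3-cycle $(a_1,a_2,a_3)\mapsto(a_2,a_3,a_1)$. Combined with bisymmetry, which is the transposition swapping the last two arguments, the generated subgroup of $S_3$ is all of $S_3$, so $F$ is a fully symmetric function of its three arguments.

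Finally, full symmetry means
$$F(a_1,a_2,a_3)=F(a_2,a_3,a_1)=F(a_3,a_1,a_2),$$
so the areal coordinates of the center on each face are all equal, identifying it as the centroid. Conversely, Theorem~5.1 already establishes that for the centroid the four cevians concur (at the centroid of the reference tetrahedron), so the centroid does possess the property. Hence it is the unique such center.

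The main obstacle I anticipate is the sign ambiguity in step two: a priori the constant-sign argument needs the center function to be continuous (or at least sign-definite) on the triangle-parameter domain, and one must rule out pathological piecewise choices. I would handle this by noting that the center functions under consideration are built from polynomial, rational, and radical combinations of the sides, so continuity on the open connected domain of valid triangles is automatic, and the global cyclic-triple iteration kills the $\varepsilon=-1$ branch cleanly.
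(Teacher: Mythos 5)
Your proposal is correct and follows essentially the same route as the paper: specialize the Concurrence Condition with $b_i=a_i$, use the symmetry $F(a,b,c)=F(a,c,b)$ to obtain the squared identity, kill the negative-sign branch by composing the relation three times (your $\varepsilon^3=1$ step is the same idea as the paper's multiplication of the three cyclic instances yielding $1=-1$), and conclude full symmetry of $F$, hence the centroid. Your explicit appeal to continuity for the sign dichotomy and the converse citation of Theorem 5.1(b) are minor additions; the substance of the argument is identical to the paper's.
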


\begin{proof}
Suppose $F(a,b,c)=af(a,b,c)$ is such a center function.
The algebraic condition for this to be true is obtained by
substituting $b_i=a_i$ in the concurrence condition to get
$F(a_1,a_2,a_3)^2=F(a_2,a_1,a_3)^2$. This implies that
\begin{equation}
\label{eq:E1}
F(a_1,a_2,a_3)=-F(a_2,a_1,a_3)
\end{equation}
or
\begin{equation}
\label{eq:E2}
F(a_1,a_2,a_3)=F(a_2,a_1,a_3)
\end{equation}
for all $a_1$, $a_2$, and $a_3$.
\par If condition (\ref{eq:E1}) holds, then we would have
$$F(a_1,a_2,a_3)=-F(a_2,a_1,a_3)$$
$$F(a_2,a_3,a_1)=-F(a_3,a_2,a_1)$$
$$F(a_3,a_1,a_2)=-F(a_1,a_3,a_2)$$
since the equality must be true for all values of its arguments.
Since $F(a,b,c)=F(a,c,b)$,
multiplying these three equations together yields $1=-1$,
a contradiction.
\par If condition (\ref{eq:E2}) holds, then we would have
$$\frac{F(a_1,a_2,a_3)}{F(a_2,a_3,a_1)}=1$$
or $$F(a_1,a_2,a_3):F(a_2,a_3,a_1):F(a_3,a_1,a_2)=1:1:1$$
so that $F$ represents the centroid.
\end{proof}

\begin{corollary}
The centroid is the only triangle center with the property that in any
tetrahedron, the cevians to these face centers concur.
\end{corollary}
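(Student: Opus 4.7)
The plan is to observe that this corollary is an almost immediate consequence of the preceding theorem combined with Theorem 5.1(b), so the real work has already been done. I would argue in two directions.

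For the ``only if'' direction, suppose $F(a,b,c)$ is a center function whose face cevians concur in every tetrahedron. Since every isosceles tetrahedron is, in particular, a tetrahedron, $F$ has the concurrence property on the strictly smaller class of isosceles tetrahedra. The previous theorem then applies verbatim and forces $F$ to represent the centroid. No additional computation is required here; I simply need to point out that the hypothesis of the corollary is logically stronger than the hypothesis of the theorem, so the theorem's conclusion is inherited.

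For the ``if'' direction, I need to confirm that the centroid actually does have the concurrence property in every tetrahedron, so that the characterization is nonvacuous. This is precisely Theorem 5.1(b), which states that for an arbitrary tetrahedron the cevians to the face centroids concur (at the centroid of the reference tetrahedron). Invoking that earlier result closes the loop.

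The main ``obstacle'' is really just bookkeeping: making sure that the reduction to the isosceles case is valid and that the centroid is not excluded by some side condition hidden in the earlier theorem (it is not, since the proof of the theorem shows the centroid satisfies condition~(\ref{eq:E2}) and therefore genuinely survives the trichotomy). Because the corollary follows from a specialization of hypotheses plus a previously proved existence statement, the proof should be only a few lines long and needs no new calculations.
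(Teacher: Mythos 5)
Your proposal matches the paper's own argument: the paper likewise deduces the uniqueness by noting that concurrence in every tetrahedron implies concurrence in every isosceles tetrahedron, so the preceding theorem forces the center to be the centroid. Your extra remark that the centroid genuinely has the property (via the earlier theorem on arbitrary tetrahedra, part (b)) is a small completeness point the paper leaves implicit, but it does not change the approach.
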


\begin{proof}
Since the cevians concur for {\it any} tetrahedron, they
must surely concur for any isosceles tetrahedron.
But the previous theorem rules this possibility out.
\end{proof}

\void{
\smallskip
An independent proof can also be given.
The following proof is due to Peter Gilbert.

\begin{proof}
The concurrence condition can be written as
$$\frac{F(b_2,a_1,b_3)}{F(a_1,b_3,b_2)}
=\frac{F(b_1,a_2,b_3)}{F(a_2,b_3,b_1)}.$$
Changing variables, we can write this as
$$\frac{F(x,y,g)}{F(y,g,x)}
=\frac{F(p,q,g)}{F(q,g,p)}.$$
Fixing $p$ and $q$, we see that the right-hand side of this equation
depends only on $g$ and therefore we can define
$$G(g)=\frac{F(x,y,g)}{F(y,g,x)}.$$
For any $x$, $y$, and $z$, we have
$$
\begin{aligned}
F(x,y,z)&=F(x,z,y)=F(z,x,y)G(y)=F(z,y,x)G(y)\\
&=F(y,z,x)G(x)G(y)=F(y,x,z)G(x)G(y)\\
&=F(x,y,z)G(z)G(x)G(y).\\
\end{aligned}
$$

Hence
$$G(x)G(y)G(z)=1$$
for all $x$, $y$ and $z$.
Letting $z=y=x$, shows that $G(x)^3=1$ and so $G(x)=1$ for all $x$.
Hence
$$
\begin{aligned}
F(a,b,c):F(b,c,a):F(c,a,b)
&=\frac{F(a,b,c)}{F(a,b,c)}:\frac{F(b,c,a)}{F(a,b,c)}:\frac{F(c,a,b)}{F(a,b,c)}\\
&=1:G(c)^{-1}:G(b)\\
&=1:1:1\\
\end{aligned}
$$
and thus the center is the centroid.
\end{proof}
}

The following lemma is well known:

\begin{lemma}[Power Lemma]
If $f(x)$ is a nonzero function satisfying
$$f(xy)=f(x)f(y)$$
for all $x$ and $y$, then $f(x)=x^r$ for some constant $r$.
\end{lemma}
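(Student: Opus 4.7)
The plan is to reduce this multiplicative Cauchy equation to the additive one, whose solutions are classical. First I would set $x=y=1$ to obtain $f(1)=f(1)^2$, so $f(1)\in\{0,1\}$. The value $0$ is ruled out because it would force $f(x)=f(x\cdot 1)=f(x)f(1)=0$ identically, contradicting the hypothesis that $f$ is nonzero. Thus $f(1)=1$, and the relation $f(x)f(1/x)=f(1)=1$ shows $f$ is nowhere zero on $(0,\infty)$; since $f(x)=f(\sqrt{x})^2 > 0$ for $x>0$, in fact $f$ is positive throughout its (positive-real) domain, which is the setting needed for the applications in this paper.

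The next step is the standard logarithmic substitution. Define $g\colon\mathbb{R}\to\mathbb{R}$ by $g(t)=\log f(e^t)$. The identity $f(xy)=f(x)f(y)$ translates into the additive Cauchy equation
\[ g(s+t)=g(s)+g(t) \qquad\text{for all } s,t\in\mathbb{R}. \]
Under any of the usual mild regularity assumptions — continuity, monotonicity on some interval, measurability, or even boundedness on a set of positive measure — the only solutions are linear: $g(t)=rt$ with $r=g(1)=\log f(e)$. Exponentiating gives $f(x)=x^{r}$ for every $x>0$, as required.

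The one genuine obstacle is the appeal to regularity. Without it, a Hamel-basis construction over $\mathbb{Q}$ produces pathological solutions of the multiplicative Cauchy equation that are not of the form $x^r$, so the lemma is literally false for arbitrary $f$. In the intended applications here, however, the center functions $F(a,b,c)$ to which the lemma is applied are assembled by rational and algebraic operations on their arguments; they are in particular continuous on $(0,\infty)$, so the regularity hypothesis is automatic and the classical conclusion applies.
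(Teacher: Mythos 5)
The paper does not actually prove this lemma; it is quoted as ``well known'' and used as a black box in Theorem 10.8 and the hyperbolic-group classification, so there is no in-paper argument to compare yours against. Your proof is the standard one: normalize $f(1)=1$, observe $f>0$ on $(0,\infty)$ via $f(x)=f(\sqrt{x})^{2}$, substitute $g(t)=\log f(e^{t})$ to turn the multiplicative equation into Cauchy's additive equation, and solve $g(t)=rt$ under a regularity hypothesis. That is correct as far as it goes, and your caveat is not a defect of your write-up but a genuine observation about the statement itself: with no regularity assumption at all, Hamel-basis constructions give solutions of $f(xy)=f(x)f(y)$ on $(0,\infty)$ that are not power functions, so the lemma as literally worded is false, and any proof must smuggle in continuity, monotonicity, or measurability somewhere. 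In the context of this paper the gap is harmless, since the center functions $F(a,b,c)$ to which the lemma is applied are built from algebraic expressions in the edge lengths and are continuous (indeed real-analytic) in their positive arguments, so the function $g$ (and $K$, $H$ in the hyperbolic-group theorem) inherits enough regularity for the classical conclusion. It would strengthen both your argument and the paper's to state the lemma with an explicit continuity hypothesis, which is what your proof genuinely establishes.
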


\begin{theorem}
\label{thm:power}
The power points are the only triangle centers with the property that in any
isodynamic tetrahedron, the cevians to these face centers concur.
\end{theorem}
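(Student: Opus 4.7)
The plan is to reduce the concurrence requirement to a functional equation on the areal center function $F$, and then use that equation together with the bisymmetry $F(a,b,c)=F(a,c,b)$ and homogeneity of center functions to force $F$ to represent a power point; the Power Lemma supplies the final stroke.

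First, I would substitute the isodynamic constraint $b_i=k/a_i$ into the Concurrence Condition for cevians~1 and~2. After applying bisymmetry, the condition reads $g_w(p,q)=g_w(k/q,k/p)$ for all positive $p,q,k,w$, where $w:=k/a_3$ and $g_w(p,q):=F(p,q,w)/F(q,p,w)$. Specializing $k=q$ collapses this to $g_w(p,q)=g_w(1,q/p)$, so $g_w(p,q)=\mu_w(q/p)$ for a single-variable function $\mu_w$. Homogeneity of $F$ then forces $\mu_w$ to be independent of $w$, yielding a single $\mu$ with $F(p,q,w)/F(q,p,w)=\mu(q/p)$ for all positive $p,q,w$.

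The main step is to show $\mu$ is multiplicative. Starting from $F(p,q,w)$ and alternating between the identity above (which swaps the first two arguments at the cost of a factor $\mu$) and bisymmetry (which swaps the last two, free of charge), one traverses the six orderings of the triple and returns to $F(p,q,w)$. Equating the two expressions yields the cocycle relation $\mu(q/p)=\mu(w/p)\,\mu(q/w)$; substituting $a=w/p$ and $b=q/w$ turns this into $\mu(ab)=\mu(a)\mu(b)$, and the Power Lemma supplies $\mu(t)=t^r$. A direct computation then gives $b^r F(b,c,a)=a^r F(a,b,c)$ and $c^r F(c,a,b)=a^r F(a,b,c)$, so the triangle center $(F(a,b,c):F(b,c,a):F(c,a,b))$ is proportional to $(a^{-r}:b^{-r}:c^{-r})$---a power point. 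The converse direction (that every power point indeed yields concurrent cevians in any isodynamic tetrahedron) is the earlier isodynamic-case result.

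The principal obstacle is the multiplicativity step. Taken by itself, the isodynamic concurrence condition produces only the tautology $\mu(q/p)=\mu((k/p)/(k/q))$ and yields no useful constraint on $\mu$; multiplicativity emerges only when one weaves in bisymmetry---a structural property of center functions rather than of isodynamic tetrahedra---to traverse all three cyclic positions of the ``third'' argument.
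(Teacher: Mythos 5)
Your proposal is correct and follows essentially the same route as the paper's proof: both reduce the isodynamic concurrence condition (using homogeneity, via your $k=q$ specialization versus the paper's substitution $a_2\mapsto t/a_2$) to the statement that $F(p,q,w)/F(q,p,w)$ depends only on $q/p$, then obtain multiplicativity of that ratio function by cycling through the argument orderings with the $b\leftrightarrow c$ symmetry of the center function, and finish with the Power Lemma to conclude the center is a power point. The differences are only cosmetic, so no further comparison is needed.
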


\begin{proof}
The concurrence condition becomes
$$F(\frac{t}{a_2},a_1,\frac{t}{a_3})F(a_2,\frac{t}{a_3},\frac{t}{a_1})=
F(a_1,\frac{t}{a_3},\frac{t}{a_2})F(\frac{t}{a_1},a_2,\frac{t}{a_3})$$
for all $a_1$, $a_2$, $a_3$, and $t$.
We can write this as
$$\frac{F(\frac{t}{a_2},a_1,\frac{t}{a_3})}{F(a_1,\frac{t}{a_2},\frac{t}{a_3})}
=
\frac{F(\frac{t}{a_1},a_2,\frac{t}{a_3})}{F(a_2,\frac{t}{a_1},\frac{t}{a_3})}.$$
Since this is true for all $a_2$, it will be true if we replace $a_2$ by $t/a_2$
to get
$$\frac{F(a_2,a_1,\frac{t}{a_3})}{F(a_1,a_2,\frac{t}{a_3})}
=
\frac{F(\frac{t}{a_1},\frac{t}{a_2},\frac{t}{a_3})}{F(\frac{t}{a_2},\frac{t}{a_1},
\frac{t}{a_3})}.$$
Since $F$ is homogeneous, we have
$$\frac{F(a_2,a_1,\frac{t}{a_3})}{F(a_1,a_2,\frac{t}{a_3})}
=
\frac{F(\frac{1}{a_1},\frac{1}{a_2},\frac{1}{a_3})}{F(\frac{1}{a_2},
\frac{1}{a_1},\frac{1}{a_3})}.$$
Let $t=a_3x$ to get
$$\frac{F(a_2,a_1,x)}{F(a_1,a_2,x)}
=
\frac{F(\frac{1}{a_1},\frac{1}{a_2},\frac{1}{a_3})}{F(\frac{1}{a_2},
\frac{1}{a_1},\frac{1}{a_3})}.$$
The right-hand side is independent of $x$, and so we can define
\begin{equation}
\label{eq:P1}
G(a_1,a_2)=\frac{F(a_2,a_1,x)}{F(a_1,a_2,x)}.
\end{equation}
Thus
$$F(a,b,c)=G(a,b)F(b,a,c)=G(a,b)G(b,a)F(a,b,c)$$
and so
\begin{equation}
\label{eq:P2}
G(a,b)G(b,a)=1
\end{equation}
for all $a$ and $b$. Similarly,
$$
\begin{aligned}
F(a,b,c)&=G(a,b)F(b,a,c)=G(a,b)F(b,c,a)\\
&=G(a,b)G(b,c)F(c,b,a)=G(a,b)G(b,c)F(c,a,b)\\
&=G(a,b)G(b,c)G(c,a)F(a,c,b)=G(a,b)G(b,c)G(c,a)F(a,b,c)\\
\end{aligned}
$$
and so
\begin{equation}
\label{eq:P3}
G(a,b)G(b,c)G(c,a)=1
\end{equation}
for all $a$, $b$ and $c$.
Using the homogeneity of $F$ in equation (\ref{eq:P1}), we can divide all arguments by $a_2$ to get
$$G(a_1/a_2,1)=\frac{F(1,a_1/a_2,x/a_2)}{F(a_1/a_2,1,x/a_2)}=
\frac{F(a_2,a_1,x)}{F(a_1,a_2,x)}=G(a_1,a_2).$$
Thus, if we define
$$g(x)=G(x,1),$$
then we get analogs of equations (\ref{eq:P2}) and (\ref{eq:P3}):
$$g(\frac{a}{b})g(\frac{b}{a})=1$$
and
$$g(\frac{a}{b})g(\frac{b}{c})g(\frac{c}{a})=1.$$
Now
$$g(\frac{a}{b})g(\frac{b}{c})={1/g(\frac{c}{a})}=g(\frac{a}{c}).$$
Let $x=a/b$ and $y=b/c$ to get
$$g(x)g(y)=g(xy)$$
for all $x$ and $y$.
By the Power Lemma, we must have $g(x)=x^r$ for some $r$.
Hence
$$
\begin{aligned}
F(a,b,c):F(b,c,a):F(c,a,b)&=\frac{F(a,b,c)}{F(a,b,c)}:\frac{F(b,c,a)}{F(a,b,c)}:
\frac{F(c,a,b)}{F(a,b,c)}\\
&=1:G(b,a):G(c,a)\\
&=1:g(\frac{b}{a}):g(\frac{c}{a})\\
&=1:\Bigl(\frac{b}{a}\Bigr)^r:\Bigl(\frac{c}{a}\Bigr)^r\\
&=a^r:b^r:c^r\\
\end{aligned}
$$
and thus the center is a power point.
\end{proof}

\begin{theorem}
Suppose the edges of a tetrahedron satisfy the condition
$h(a_i)+h(b_i)=t$, $i=1,2,3$, for some function $h(x)$ and some constant, $t$.
Then the cevians to the face centers with (areal) center function
$[h(b)+h(c)-h(a)]^r$ concur for any $r$.
\end{theorem}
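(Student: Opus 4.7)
The plan is to reduce the problem to the Concurrence Condition stated in the corollary above (the one giving the pairwise concurrence criterion for two cevians on faces $1$ and $2$), and then invoke the earlier lemma that pairwise concurrence of three such cevians implies simultaneous concurrence of all four.

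First I would set $F(a,b,c) = [h(b)+h(c)-h(a)]^r$ and compute the four quantities appearing in the Concurrence Condition:
\begin{align*}
F(b_2,a_1,b_3) &= [h(a_1)+h(b_3)-h(b_2)]^r,\\
F(a_2,b_3,b_1) &= [h(b_3)+h(b_1)-h(a_2)]^r,\\
F(a_1,b_3,b_2) &= [h(b_3)+h(b_2)-h(a_1)]^r,\\
F(b_1,a_2,b_3) &= [h(a_2)+h(b_3)-h(b_1)]^r.
\end{align*}
Then I would substitute the hypothesis $h(b_i) = t - h(a_i)$ into each bracket. A direct check gives
$$h(a_1)+h(b_3)-h(b_2) = h(a_1)+h(a_2)-h(a_3) = h(a_2)+h(b_3)-h(b_1),$$
and similarly
$$h(b_3)+h(b_1)-h(a_2) = 2t - h(a_1)-h(a_2)-h(a_3) = h(b_3)+h(b_2)-h(a_1).$$
Hence $F(b_2,a_1,b_3) = F(b_1,a_2,b_3)$ and $F(a_2,b_3,b_1) = F(a_1,b_3,b_2)$, so the two sides of the Concurrence Condition are identical factor-by-factor. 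This establishes the concurrence of the cevians $A_1P_1$ and $A_2P_2$.

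Next I would observe that the analogous verification for the pairs $(1,3)$ and $(2,3)$ is obtained by permuting indices: the roles of $a_i$ and $b_i$ in the Concurrence Condition are dictated by the mapping of edges between faces described in Section 2, and under the hypothesis $h(a_i)+h(b_i) = t$ (which is symmetric in the index $i$), exactly the same cancellations occur. Therefore the cevians from $A_1,A_2,A_3$ meet pairwise, and Lemma (the one stating that pairwise meeting of three such cevians forces a common point) gives a single intersection point $Q$. Symmetry then ensures that the cevian from $A_4$ also passes through $Q$: one may either repeat the argument for a different triple of faces (e.g.\ $(1,2,4)$), or invoke the observation that the construction is invariant under any permutation of the vertices preserving the edge-pairing $\{a_i,b_i\}$.

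I do not expect any serious obstacle; the entire proof is essentially a symbolic verification powered by the single substitution $h(b_i)=t-h(a_i)$. The only point requiring care is to keep straight the indexing convention between the two faces (which is why the Concurrence Condition is asymmetric in $a_i$ and $b_j$), and to confirm that the same identity works for every pair of adjacent faces rather than just faces $1$ and $2$. Once that bookkeeping is done, the exponent $r$ plays no role beyond ensuring that equal bases yield equal $r$-th powers, which is why the result holds for every $r$.
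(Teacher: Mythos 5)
Your proposal is correct and follows essentially the same route as the paper: both verify the Concurrence Condition for $F(a,b,c)=[h(b)+h(c)-h(a)]^r$ by using $h(a_i)+h(b_i)=t$ to reduce each side to the same expression (you show the factors match pairwise, the paper rewrites each bracket with $-t$ and observes the identity). Your extra care in passing from the pairwise condition to concurrence of all four cevians via the earlier lemma and symmetry is exactly the step the paper leaves implicit, so nothing further is needed.
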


\begin{proof}
The concurrence condition is
$$[h(a_1)+h(b_3)-h(b_2)]^r[h(b_3)+h(b_1)-h(a_2)]^r$$
$$\qquad=
[h(b_3)+h(b_2)-h(a_1)]^r[h(a_2)+h(b_3)-h(b_1)]^r.$$
This is equivalent to
$$[h(a_1)+h(b_3)+h(a_2)-t]^r[h(b_3)+h(b_1)+h(b_2)-t]^r$$
$$\qquad=
[h(b_3)+h(b_2)+h(b_1)-t]^r[h(a_2)+h(b_3)+h(a_1)-t]^r$$
which is easily seen to be an identity.
\end{proof}

\begin{corollary}
In a circumscriptible tetrahedron, the cevians to the face centers
with areal center function $(b+c-a)^r$ concur.
This includes the Gergonne point and its isotomic conjugate, the Nagel point.
\end{corollary}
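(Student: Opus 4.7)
The plan is to obtain this corollary as a direct specialization of the preceding theorem, together with the standard identification of the Gergonne and Nagel points as members of a one-parameter family of face centers.

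First I would set $h(x)=x$. With this choice, the hypothesis $h(a_i)+h(b_i)=t$ of the preceding theorem is exactly the defining algebraic condition $a_i+b_i=t$ for a circumscriptible tetrahedron (as listed in the Types of Tetrahedra table). The theorem then immediately yields that the cevians to the face centers with areal center function
\[
[h(b)+h(c)-h(a)]^r \;=\; (b+c-a)^r
\]
concur, for every real $r$. This is the first assertion of the corollary.

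Next I would verify the named-center identifications. The Nagel point in areal (barycentric) coordinates is $(b+c-a : c+a-b : a+b-c)$, which is precisely the case $r=1$ of the family above. The Gergonne point, being the isotomic conjugate of the Nagel point, has areal coordinates $\bigl(1/(b+c-a) : 1/(c+a-b) : 1/(a+b-c)\bigr)$, which is the case $r=-1$. Thus both named centers lie in the family for which concurrence has just been established. As a sanity check, one could instead deduce the Gergonne case from the Nagel case by invoking the earlier theorem on isotomic conjugates.

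There is no real obstacle here: the substantive work is packaged inside the preceding theorem, and the only thing that could go wrong is a misidentification of the two classical centers with the values $r=\pm 1$. Those identifications are standard, so the corollary follows at once.
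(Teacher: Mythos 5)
Your proposal is correct and matches the paper's intent exactly: the corollary is stated as an immediate specialization of the preceding theorem with $h(x)=x$, and your identifications of the Nagel point ($r=1$) and Gergonne point ($r=-1$, its isotomic conjugate) in areal coordinates are the standard ones.
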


\begin{corollary}
In an orthocentric tetrahedron, the cevians to the face centers
with areal center function $(b^2+c^2-a^2)^r$ concur.
This includes the orthocenter and its isotomic conjugate.
\end{corollary}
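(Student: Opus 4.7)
The plan is essentially to invoke the preceding theorem with the specific choice $h(x)=x^2$. The defining algebraic condition of an orthocentric tetrahedron, as listed in the table in Section~\ref{section:typesOfTetahedra}, is $a_i^2+b_i^2 = t$ for $i=1,2,3$. With $h(x)=x^2$, this is precisely the hypothesis $h(a_i)+h(b_i)=t$ of the theorem, so applying it yields concurrence of the cevians to the face centers with areal center function $[h(b)+h(c)-h(a)]^r = (b^2+c^2-a^2)^r$ for every real $r$.

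What then remains is to check that the orthocenter and its isotomic conjugate both occur in this one-parameter family. Recall that the orthocenter in areal (barycentric) coordinates is $(\tan A:\tan B:\tan C)$; using $\tan A = \sin A/\cos A$ together with the Law of Cosines and dropping the factor $2\cdot\mathrm{area}$ symmetric in $A,B,C$, this is equivalent to
$$\left(\frac{1}{b^2+c^2-a^2}\ :\ \frac{1}{c^2+a^2-b^2}\ :\ \frac{1}{a^2+b^2-c^2}\right),$$
which is the case $r=-1$. The isotomic conjugate (obtained by inverting each areal coordinate) is
$$\bigl(b^2+c^2-a^2\ :\ c^2+a^2-b^2\ :\ a^2+b^2-c^2\bigr),$$
which is the case $r=+1$. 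Both are therefore covered by the concurrence just proved.

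There is essentially no obstacle here — the preceding theorem does all the geometric work and the corollary is a matter of substitution. The only minor care required is the rewriting of the orthocenter's areal coordinates into the form $(b^2+c^2-a^2)^{-1}$, which is the standard identification and uses only $\tan A = 2K/(b^2+c^2-a^2)$ (consistent with the reduction of Kimberling's trilinears described in Section~\ref{section:faceCenters}).
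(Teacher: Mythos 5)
Your proposal is correct and matches the paper's route exactly: the corollary is an immediate instance of the preceding theorem with $h(x)=x^2$, the orthocentric condition $a_i^2+b_i^2=t$ supplying the hypothesis, and the orthocenter and its isotomic conjugate corresponding to $r=-1$ and $r=+1$. (Your identification of the orthocenter's areal coordinates as $\bigl((b^2+c^2-a^2)^{-1}:(c^2+a^2-b^2)^{-1}:(a^2+b^2-c^2)^{-1}\bigr)$ is right, though the constant is $\tan A=4K/(b^2+c^2-a^2)$ rather than $2K/(b^2+c^2-a^2)$ — immaterial, since it is symmetric and cancels in the ratio.)
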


\begin{corollary}
In a harmonic tetrahedron, the cevians to the face centers
with areal center function $(1/b+1/c-1/a)^r$ concur.
This includes the X117 point and its isotomic conjugate, the X102 point.
\end{corollary}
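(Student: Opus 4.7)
The plan is a one-line specialization of the immediately preceding theorem. Setting $h(x) = 1/x$ turns the hypothesis $h(a_i)+h(b_i)=t$ into $1/a_i+1/b_i = t$, which is precisely the defining relation for a harmonic tetrahedron. The conclusion of the theorem then yields, with no further work, that the cevians to the face centers with areal center function $[h(b)+h(c)-h(a)]^r = (1/b+1/c-1/a)^r$ concur for every real $r$. So the first sentence of the corollary is immediate.

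For the second sentence, I would verify the bookkeeping that identifies the Kimberling points X117 and X102 inside this one-parameter family. The note at the end of the harmonic section gives the trilinears of X117 as $(1/b+1/c-1/a)/a$; converting trilinears to areal coordinates multiplies the first coordinate by $a$, producing areal center function $1/b+1/c-1/a$, which is exactly the $r=1$ case. For X102, the cleanest route is to apply the earlier corollary stating that if the cevians to a family of face centers concur, then so do the cevians to their isotomic conjugates; since the paper identifies X102 as the isotomic conjugate of X117, concurrence of the X102 cevians is automatic. (Alternatively, one can use $r=-1$ directly, since in areal coordinates the isotomic conjugate inverts each coordinate.)

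There is no real obstacle here: the entire content of the corollary is a specialization plus a tag-matching exercise against the Kimberling list. The only place where care is needed is the trilinear-to-areal conversion, which is required to see that $1/b+1/c-1/a$ really is the first \emph{areal} coordinate of X117 (rather than its trilinear), so that the hypothesis of the theorem applies verbatim.
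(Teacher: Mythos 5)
Your proposal is correct and takes essentially the same route as the paper, which proves this corollary (like its siblings) simply by specializing the preceding theorem, here with $h(x)=1/x$ so that $h(a_i)+h(b_i)=t$ is exactly the harmonic condition $1/a_i+1/b_i=\text{constant}$. Your additional bookkeeping --- converting the trilinear $(1/b+1/c-1/a)/a$ of X117 to the areal function $1/b+1/c-1/a$ (the case $r=1$) and handling X102 either as the $r=-1$ member or via the isotomic-conjugate corollary --- is consistent with the identifications the paper itself asserts.
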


\begin{corollary}
In an isodynamic tetrahedron, the cevians to the power points concur.
This includes the incenter, centroid, symmedian point and their isogonal
and isotomic conjugates.
\end{corollary}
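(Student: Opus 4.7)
The plan is to apply the Concurrence Condition from the corollary above directly to the power-point center function $F(a,b,c)=a^{s}$ (where $s$ is the trilinear exponent $r$ or the areal exponent $r+1$, depending on convention—the argument is indifferent to this). Substituting into
$$F(b_2,a_1,b_3)\,F(a_2,b_3,b_1)=F(a_1,b_3,b_2)\,F(b_1,a_2,b_3),$$
only the first slot of $F$ contributes, so the left-hand side collapses to $b_2^s a_2^s=(a_2b_2)^s$ and the right-hand side to $a_1^s b_1^s=(a_1b_1)^s$. In an isodynamic tetrahedron $a_1b_1=a_2b_2=a_3b_3=t$, so both sides equal $t^s$ and the cevians from $A_1$ and $A_2$ intersect.

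To handle the remaining five pairs of cevians, I would invoke the cyclic mapping of Figure \ref{fig:mapping}, under which $A_1\to A_2\to A_3\to A_4$ and the edge labels are permuted by the prescribed rule. Each pair-concurrence condition reduces, by exactly the same calculation, to an identity of the form $(a_ib_i)^s=(a_jb_j)^s$ for some $i,j\in\{1,2,3\}$, which holds by the isodynamic hypothesis. Having established that all six pairs of cevians meet, the earlier lemma (three cevians meeting pairwise must concur at a single point) yields a common intersection $P$ for the cevians from $A_1,A_2,A_3$ and a common intersection $P'$ for the triple $A_1,A_2,A_4$; since both $P$ and $P'$ lie on the cevians from $A_1$ and from $A_2$, which intersect in a unique point, $P=P'$ and all four cevians pass through $P$.

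Finally, the named examples are verified by reading off the exponent: $r=0$ gives the incenter (trilinear $1{:}1{:}1$), $r=-1$ the centroid (trilinear $1/a$), and $r=1$ the symmedian point (trilinear $a$). Isogonal conjugation sends trilinear $a^r$ to $a^{-r}$, and isotomic conjugation sends areal $a^{r+1}$ to $a^{-r-1}$, so each conjugate listed is itself a power point and is covered by the same argument (the previous theorem on isotomic conjugates could be invoked instead). The main subtlety is bookkeeping in the cyclic-symmetry step: one must confirm that the mapping really does send each of the six pairwise concurrence conditions to an equality of the form $(a_ib_i)^s=(a_jb_j)^s$ directly covered by the isodynamic hypothesis, rather than to some other shape that would require extra manipulation.
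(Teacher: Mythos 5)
Your proof is correct, and it takes a more direct route than the paper. The paper obtains this corollary as the case $h(x)=\log x$ of its general theorem (if $h(a_i)+h(b_i)=t$ then the cevians to the centers $[h(b)+h(c)-h(a)]^r$ concur); read literally, that substitution yields the center function $[\log(bc/a)]^r$, so one must still observe that the same cancellation works multiplicatively, i.e.\ that a power of $bc/a$ is proportional to a pure power of $a$ (since $abc$ is symmetric) -- a step the paper leaves implicit in its one-line proof. You instead substitute $F(a,b,c)=a^s$ directly into the Concurrence Condition, where the two sides collapse to $(a_2b_2)^s$ and $(a_1b_1)^s$ and the isodynamic hypothesis $a_ib_i=t$ finishes the argument; this bypasses the logarithm device entirely and is, if anything, cleaner. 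You are also more scrupulous than the paper on two points it glosses over: you verify that each of the six pairwise conditions reduces to $(a_ib_i)^s=(a_jb_j)^s$ (your ``bookkeeping'' worry is easily discharged -- for example, for faces 1 and 3 the condition $y_1/w_1=y_3/w_3$ reads $b_3^s/a_1^s=b_1^s/a_3^s$, again of the required shape, and similarly for the pairs involving face 4), and you glue the pairwise intersections into a single point using the lemma that cevians meeting in pairs meet at a common point. Your identification of the incenter ($r=0$), centroid, symmedian point, and their isogonal and isotomic conjugates as power points agrees with the paper.
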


\begin{proof}
Take $h(x)=\log x$ in the previous theorem.
\end{proof}

\begin{conjecture}
Suppose the edges of a tetrahedron satisfy the condition
$h(a_i)+h(b_i)=t$, $i=1,2,3$, for some power function $h(x)=x^n$ and some constant, $t$.
If cevians to four corresponding face centers are concurrent, then
the center function for these face centers must be of the form
$[h(b)+h(c)-h(a)]^r$ for some $r$.
\end{conjecture}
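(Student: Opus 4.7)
My plan is to adapt the argument used in Theorem \ref{thm:power}, replacing the logarithmic change of variables used there for the isodynamic case with one tailored to the power function $h(x)=x^{n}$. I would begin from the pairwise concurrence condition for cevians from $A_{1}$ and $A_{2}$ and use the bisymmetry $F(a,b,c)=F(a,c,b)$ to rewrite it as
$$Q(b_{2},a_{1},b_{3})=Q(b_{1},a_{2},b_{3}),\qquad\text{where}\qquad Q(u,v,w):=\frac{F(u,v,w)}{F(v,u,w)}.$$
Under the hypothesis $a_{i}^{n}+b_{i}^{n}=t$, both $a_{1}^{n}-b_{2}^{n}$ and $a_{2}^{n}-b_{1}^{n}$ equal $a_{1}^{n}+a_{2}^{n}-t$; moreover any two pairs $(u,v),(u',v')$ with $v^{n}-u^{n}=(v')^{n}-(u')^{n}$ are realized as $(b_{2},a_{1})$ and $(b_{1},a_{2})$ in some tetrahedron of the prescribed type (take $t=v^{n}+(u')^{n}$, then choose $a_{3}$ so that $a_{3}^{n}+w^{n}=t$). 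Hence $Q(u,v,w)=\tilde{Q}(v^{n}-u^{n},w)$ for some function $\tilde{Q}$.

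Next I would exploit homogeneity of $F$: from $F(ka,kb,kc)=k^{d}F(a,b,c)$ one gets $\tilde{Q}(k^{n}s,kw)=\tilde{Q}(s,w)$, and setting $k=1/w$ yields $\tilde{Q}(s,w)=g(s/w^{n})$ for a single-variable function $g$. A direct check confirms that the remaining pairwise conditions for $(1,3),(2,3),(1,4),(2,4),(3,4)$ each collapse to this same relation, so no new information arises. From bisymmetry one has the cyclic identity
$$\frac{F(a,b,c)}{F(b,a,c)}\cdot\frac{F(b,c,a)}{F(c,b,a)}\cdot\frac{F(c,a,b)}{F(a,c,b)}=1,$$
which in terms of $g$ becomes the core functional equation
$$g\!\left(\tfrac{b^{n}-a^{n}}{c^{n}}\right)g\!\left(\tfrac{c^{n}-b^{n}}{a^{n}}\right)g\!\left(\tfrac{a^{n}-c^{n}}{b^{n}}\right)=1.$$

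To solve it I would substitute $x_{i}=(1+s_{i})/(1-s_{i})$ for $s_{1}=(b^{n}-a^{n})/c^{n}$ and its cyclic analogues; this yields $x_{1}=(b^{n}+c^{n}-a^{n})/(a^{n}+c^{n}-b^{n})$, $x_{2}=(a^{n}+c^{n}-b^{n})/(a^{n}+b^{n}-c^{n})$, $x_{3}=(a^{n}+b^{n}-c^{n})/(b^{n}+c^{n}-a^{n})$, so $x_{1}x_{2}x_{3}=1$ automatically. Letting $\psi(x)=\log g((x-1)/(x+1))$ (so $\psi(1)=\log g(0)=0$), the equation becomes $\psi(x_{1})+\psi(x_{2})+\psi(x_{3})=0$ whenever $x_{1}x_{2}x_{3}=1$. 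Setting $x_{2}=1$ gives $\psi(1/x)=-\psi(x)$, after which substituting $x_{3}=1/(x_{1}x_{2})$ in the general three-variable relation yields the Cauchy equation $\psi(xy)=\psi(x)+\psi(y)$. Applying the Power Lemma to $\exp\circ\psi$ gives $\psi(x)=r\log x$ for some constant $r$, so $g(s)=((1+s)/(1-s))^{r}$ and consequently
$$\frac{F(a,b,c)}{F(b,a,c)}=\left(\frac{b^{n}+c^{n}-a^{n}}{a^{n}+c^{n}-b^{n}}\right)^{r}.$$
Setting $U(a,b,c)=F(a,b,c)/(b^{n}+c^{n}-a^{n})^{r}$, both this ratio formula and the bisymmetry of $F$ force $U$ to be invariant under every transposition of its arguments, hence totally symmetric; since totally symmetric factors do not alter a trilinear center, this identifies $F$ with $[h(b)+h(c)-h(a)]^{r}$, as required.

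The main obstacle I anticipate is the realizability step used to promote $Q(b_{2},a_{1},b_{3})=Q(b_{1},a_{2},b_{3})$, known a priori only along the one-parameter family of tetrahedra satisfying $h(a_{i})+h(b_{i})=t$, into a statement about arbitrary $(u,v,w)$: one must verify that $t$ and the auxiliary edge $a_{3}$ can be chosen compatibly with the prescribed edges, and that the resulting tuple satisfies all the tetrahedron triangle inequalities on some open set (after which analytic continuation extends the identity). A secondary subtlety is that passing from the Cauchy equation $\psi(xy)=\psi(x)+\psi(y)$ to a genuine power function requires a regularity assumption on $F$, as in the proof of Theorem \ref{thm:power}; for the standard families of center functions this is automatic.
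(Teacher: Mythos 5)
You should first note that the paper does not prove this statement: it is stated as a Conjecture, and the author's own attempt (left commented out in the source as ``Not Yet a Proof'') stalls precisely because the trick that works in Theorem~\ref{thm:power} -- replacing $a_2$ by its ``conjugate'' edge and letting homogeneity absorb $t$ -- does not eliminate $t$ when $h(x)=x^{n}$. So there is no paper proof to compare against; your proposal is an attempt to settle an open item. Your route is a genuine advance on the author's abandoned one: instead of trying to show the two-argument ratio is independent of the third variable, you observe that along the family the concurrence condition only sees the difference $v^{n}-u^{n}$ together with $w$, use homogeneity to collapse $\tilde{Q}$ to a one-variable function $g$, and then feed the purely algebraic cyclic identity (a consequence of bisymmetry alone) into the M\"obius substitution $x=(1+s)/(1-s)$ to land on Cauchy's equation, which is exactly the engine the paper itself uses (via the Power Lemma) in the power-point and hyperbolic-center theorems. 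The bookkeeping checks out: $a_1^{n}-b_2^{n}=a_2^{n}-b_1^{n}=a_1^{n}+a_2^{n}-t$, $g(0)=Q(a,a,c)=1$, the product $x_1x_2x_3=1$ is automatic, and the final symmetrization step (invariance of $U$ under the transpositions $(12)$ and $(23)$, which generate $S_3$) correctly identifies the center with $[h(b)+h(c)-h(a)]^{r}$ up to a totally symmetric factor, which is the right notion of ``must be of the form.''

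Two gaps remain, and you have named the right ones, but they should not be waved off as routine. First, the realizability claim is false globally (e.g.\ $t-w^{n}$ can be negative, and the six lengths must satisfy the face triangle inequalities and a positive Cayley--Menger determinant), so $\tilde{Q}$, $g$ and $\psi$ are only defined on an open set near the regular tetrahedron; consequently the Cauchy equation holds only for $x_1,x_2$ near $1$, and the identity $F(a,b,c)(a^{n}+c^{n}-b^{n})^{r}=F(b,a,c)(b^{n}+c^{n}-a^{n})^{r}$ is obtained only locally. To globalize you must assume $F$ analytic or rational (true for the centers the paper processes, but not part of the conjecture's hypotheses, which allow any homogeneous bisymmetric $F$). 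Second, both the local solution of Cauchy's equation and the Power Lemma itself require a regularity hypothesis (continuity or measurability); without it Hamel-type solutions defeat the argument -- though the paper already uses the Power Lemma in this unqualified form, so your proof is at the same level of rigor as the paper's own theorems. With an explicit regularity/analyticity hypothesis added (or a restriction to polynomial or rational center functions), your outline does appear to close the conjecture, and also shows positivity of $g$ near $s=0$ needed for the logarithm since $F$ is nonvanishing near the equilateral case; writing out the open-set realizability verification is the one step that still needs actual work rather than a remark.
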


\void{
\begin{proof}
Not Yet a Proof.
The concurrence condition becomes
$$F((t-h(a_2))^{1/n},a_1,(t-h(a_3))^{1/n})F(a_2,(t-h(a_3))^{1/n},(t-h(a_1))^{1/n})=$$
$$F(a_1,(t-h(a_3))^{1/n},(t-h(a_2))^{1/n})F((t-h(a_1))^{1/n},a_2,(t-h(a_3))^{1/n})$$
for all $a_1$, $a_2$, $a_3$, and $t$.
We can write this as
$$\frac{F((t-h(a_2))^{1/n},a_1,b_3)}{F(a_1,(t-h(a_2))^{1/n},b_3)}
=
\frac{F((t-h(a_1))^{1/n},a_2,(t-h(a_3))^{1/n})}{F(a_2,(t-h(a_1))^{1/n},(t-h(a_3))^{1/n})}.$$
Since this is true for all $a_2$, it will be true if we replace $a_2$ by
$(t-h(a_2))^{1/n}$
to get
$$\frac{F(a_2,a_1,b_3)}{F(a_1,a_2,b_3)}
=
\frac{F((t-h(a_1))^{1/n},(t-h(a_2))^{1/n},(t-h(a_3))^{1/n})}{
F((t-h(a_2))^{1/n},(t-h(a_1))^{1/n},(t-h(a_3))^{1/n})}.$$
Define the function $H(x)$ by
$$H(x)=e^{h(x)}$$
so that $h(x)=\log H(x)$. Also let $T=e^t$. This gives us
$$
\begin{aligned}
\frac{F(a_2,a_1,b_3)}{F(a_1,a_2,b_3)}
&=
\frac{F((\log\frac{T}{H(a_1)})^{1/n},(\log\frac{T}{H(a_2)})^{1/n},(\log\frac{T}{H(a_3)})^{1/n})}{
F((\log\frac{T}{H(a_2)})^{1/n},(\log\frac{T}{H(a_1)})^{1/n},(\log\frac{T}{H(a_3)})^{1/n})}\\
&=
{F(\frac{1}{n}\log\frac{T}{H(a_1)},\frac{1}{n}\log\frac{T}{H(a_2)},\frac{1}{n}
\log\frac{T}{H(a_3)})}{F(\frac{1}{n}\log\frac{T}{H(a_2)},
\frac{1}{n}\log\frac{T}{H(a_1)},\frac{1}{n}\log\frac{T}{H(a_3)})}.\\
\end{aligned}
$$
Since $F$ is homogeneous, we have
$$\frac{F(a_2,a_1,b_3)}{F(a_1,a_2,b_3)}
=
\frac{F(\log\frac{T}{H(a_1)},\log\frac{T}{H(a_2)},\log\frac{T}{H(a_3)})}{F(\log\frac{T}{H(a_2)},\log\frac{T}{H(a_1)},\log\frac{T}{H(a_3)})}.$$
Let $b_3=x$ to get
$$\frac{F(a_2,a_1,x)}{F(a_1,a_2,x)}
=
\frac{F(\frac{1}{a_1},\frac{1}{a_2},\frac{1}{a_3})}{F(\frac{1}{a_2},\frac{1}{a_1},\frac{1}{a_3})}.$$
The right-hand side is independent of $x$, and so we can define
$$G(a_1,a_2)=\frac{F(a_2,a_1,x)}{F(a_1,a_2,x)}.\eqno(1)$$
Thus
$$F(a,b,c)=G(a,b)F(b,a,c)=G(a,b)G(b,a)F(a,b,c)$$
and so
$$G(a,b)G(b,a)=1\eqno(2)$$
for all $a$ and $b$. Similarly,
$$
\begin{aligned}
F(a,b,c)&=G(a,b)F(b,a,c)=G(a,b)F(b,c,a)\\
&=G(a,b)G(b,c)F(c,b,a)=G(a,b)G(b,c)F(c,a,b)\\
&=G(a,b)G(b,c)G(c,a)F(a,c,b)=G(a,b)G(b,c)G(c,a)F(a,b,c)\\
\end{aligned}
$$
and so
$$G(a,b)G(b,c)G(c,a)=1\eqno(3)$$
for all $a$, $b$ and $c$.
Using the homogeneity of $F$ in equation (1), we can divide all arguments by $a_2$ to get
$$G(a_1/a_2,1)=\frac{F(1,a_1/a_2,x/a_2)}{F(a_1/a_2,1,x/a_2)}=
\frac{F(a_2,a_1,x)}{F(a_1,a_2,x)}=G(a_1,a_2).$$
Thus, if we define
$$g(x)=G(x,1),$$
then we get analogs of equations (2) and (3):
$$g(\frac{a}{b})g(\frac{b}{a})=1$$
and
$$g(\frac{a}{b})g(\frac{b}{c})g(\frac{c}{a})=1.$$
Now
$$g(\frac{a}{b})g(\frac{b}{c})={1/g(\frac{c}{a})}=g(\frac{a}{c}).$$
Let $x=a/b$ and $y=b/c$ to get
$$g(x)g(y)=g(xy)$$
for all $x$ and $y$.
By the Power Lemma, we must have $g(x)=x^r$ for some $r$.
Hence
$$
\begin{aligned}
{F(a,b,c):F(b,c,a):F(c,a,b)&=\frac{F(a,b,c)}{F(a,b,c)}:\frac{F(b,c,a)}{F(a,b,c)}:
\frac{F(c,a,b)}{F(a,b,c)}\\
&=1:G(b,a):G(c,a)\\
&=1:g(\frac{b}{a}):g(\frac{c}{a})\\
&=1:\Bigl(\frac{b}{a}\Bigr)^r:\Bigl(\frac{c}{a}\Bigr)^r\\
&=a^r:b^r:c^r\\
\end{aligned}
$$
and thus the center is a power point.
\end{proof}
}
}

\begin{theorem}
If cevians to the points $h(b)+h(c)-h(a)$ concur, then the edges of the tetrahedron
satisfy $h(a_i)+h(b_i)=t$, for $i=1,2,3$.
\end{theorem}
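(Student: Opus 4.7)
The plan is to apply the Concurrence Condition directly to the center function $F(a,b,c)=h(b)+h(c)-h(a)$ and then extract the desired relations by elementary algebra, running the argument of the preceding theorem in reverse.

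First I would substitute $F$ into the identity
$$F(b_2,a_1,b_3)\,F(a_2,b_3,b_1)=F(a_1,b_3,b_2)\,F(b_1,a_2,b_3)$$
supplied by the Concurrence Condition, obtaining
$$[h(a_1)+h(b_3)-h(b_2)]\,[h(b_3)+h(b_1)-h(a_2)]=[h(b_3)+h(b_2)-h(a_1)]\,[h(a_2)+h(b_3)-h(b_1)].$$
After expanding the two trinomial products and collecting like terms, most of them cancel in symmetric pairs, and the surviving ones combine into the single clean factored form
$$2\,h(b_3)\,\bigl[h(a_1)+h(b_1)-h(a_2)-h(b_2)\bigr]=0.$$
For generic edges (so that $h(b_3)\ne 0$) this gives the first required relation $h(a_1)+h(b_1)=h(a_2)+h(b_2)$.

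Next I would exploit that concurrence of all four cevians forces, via the earlier corollary, all three ratio equalities $z_1/w_1=z_2/w_2$, $y_1/w_1=y_3/w_3$, and $x_2/w_2=x_3/w_3$ simultaneously, not merely one. Performing the same substitution and expansion on the remaining two, I expect them to factor analogously as
$$2\,h(b_2)\,[h(a_1)+h(b_1)-h(a_3)-h(b_3)]=0\quad\text{and}\quad 2\,h(b_1)\,[h(a_2)+h(b_2)-h(a_3)-h(b_3)]=0.$$
Under the generic assumption that none of $h(b_1),h(b_2),h(b_3)$ vanishes, these yield $h(a_1)+h(b_1)=h(a_3)+h(b_3)$ and $h(a_2)+h(b_2)=h(a_3)+h(b_3)$. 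Combining with the first relation, all three sums $h(a_i)+h(b_i)$ coincide; denoting their common value by $t$ gives the claim.

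The hard part is really just the bookkeeping inside the algebraic expansion: of the eighteen terms produced, fourteen must cancel in symmetric pairs, leaving exactly the four that assemble into $2h(b_3)\bigl[h(a_1)+h(b_1)-h(a_2)-h(b_2)\bigr]$. This is routine symbolic algebra — precisely the kind of manipulation the paper's Mathematica machinery is designed for — and it mirrors the identity verification appearing just above in the converse direction. The only subtle point is the degenerate case where some $h(b_i)$ vanishes; since the concurrence hypothesis is a polynomial identity in the edge lengths (through $h$), that case is absorbed by continuity, or by treating the $h$-values as indeterminates.
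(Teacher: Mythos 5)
Your proposal is correct and follows essentially the same route as the paper: substitute $F(a,b,c)=h(b)+h(c)-h(a)$ into the Concurrence Condition for faces 1 and 2, expand and cancel to get $h(b_3)\bigl[h(a_1)+h(b_1)\bigr]=h(b_3)\bigl[h(a_2)+h(b_2)\bigr]$, and then obtain the remaining equalities for the other face pairs by symmetry. The only difference is that you explicitly flag the degenerate case $h(b_i)=0$, which the paper silently ignores when dividing by $h(b_3)$.
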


\begin{proof}
The concurrency condition (for centers on faces 1 and 2) becomes
$$[h(a_1)+h(b_3)-h(b_2)][h(b_3)+h(b_1)-h(a_2)]$$
$$\qquad=[h(b_3)+h(b_2)-h(a_1)][h(a_2)+h(b_3)-h(b_1)].$$
Simple algebra transforms this into the equation
$$h(b_3)[h(a_1)+h(b_1)]=h(b_3)[h(a_2)+h(b_2)]$$
from which we conclude that
$$h(a_1)+h(b_1)=h(a_2)+h(b_2).$$
By symmetry, analogous results are true for any two faces of the reference
tetrahedron, so $h(a_i)+h(b_i)$ is constant for all $i$.
\end{proof}

\begin{corollary}[{\cite[p.~299]{Altshiller}}]
If cevians to the Nagel points concur, then the tetrahedron is isodynamic.
\end{corollary}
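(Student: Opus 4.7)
The plan is to obtain the corollary as a direct specialization of the preceding theorem. First I would recall that the Nagel point of a triangle has areal coordinates proportional to $(s-a,\,s-b,\,s-c)$, equivalently $(b+c-a,\,c+a-b,\,a+b-c)$, so its areal center function is $F(a,b,c) = b+c-a$. This is of the form $h(b)+h(c)-h(a)$ with $h(x)=x$ (and $r=1$ in the notation of the preceding results); the overall factor $\tfrac12$ is absorbed into the homogeneity of areal coordinates.

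Next I would invoke the preceding theorem with this choice of $h$. The hypothesis that the four cevians to the Nagel points of the faces concur is precisely the concurrency hypothesis of that theorem, and its conclusion yields $h(a_i)+h(b_i)=a_i+b_i=t$ for $i=1,2,3$ and some constant $t$. This is exactly the algebraic condition tabulated in Section \ref{section:typesOfTetahedra}. The statement of the corollary uses the term ``isodynamic'' as it appears in \cite[p.~299]{Altshiller}; in the terminology of the present paper the same condition $a_i+b_i=\text{const.}$ is called ``circumscriptible''. A brief remark reconciling the two naming conventions would be worth including, so that the reader does not confuse this conclusion with the paper's own ``isodynamic'' class defined by $a_ib_i=\text{const.}$

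There is essentially no obstacle to the argument: the corollary is a one-line substitution into the preceding theorem. The only care needed lies in (i) confirming the standard form of the Nagel-point center function, and (ii) flagging the terminological clash with the cited source. I expect the write-up to consist of a single sentence identifying the Nagel-point center function together with a pointer to the previous theorem, plus a parenthetical note about nomenclature.
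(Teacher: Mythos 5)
Your proposal is correct and is essentially the paper's own (implicit) argument: the corollary is stated with no separate proof precisely because, as you say, the Nagel point has areal center function $b+c-a=h(b)+h(c)-h(a)$ with $h(x)=x$, so the preceding theorem immediately gives $a_i+b_i=t$ for $i=1,2,3$. Your terminological flag is also well taken: that condition is what this paper calls \emph{circumscriptible} ($a_i+b_i=$ constant), and the paper's own appendix (and its Section \ref{section:circumscriptible} results, citing the same page of \cite{Altshiller}) use ``circumscriptible'' here, so the word ``isodynamic'' in the corollary's statement is best read as a slip rather than a genuine naming convention of the cited source.
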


\begin{corollary}[{\cite[p.~299]{Altshiller}}]
If cevians to the Gergonne points concur, then the tetrahedron is isodynamic.
\end{corollary}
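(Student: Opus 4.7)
The plan is to derive the claimed edge relation directly from the preceding theorem by casting the Gergonne center in the form $[h(b)+h(c)-h(a)]^{r}$. The Gergonne point has areal center function $F(a,b,c)=1/(s-a)$; up to the inessential factor $1/2$ this is $[h(b)+h(c)-h(a)]^{-1}$ with $h(x)=x$. Thus the hypothesis of Gergonne-cevian concurrence fits the $[h(b)+h(c)-h(a)]^{r}$ template at $h(x)=x$ and $r=-1$.

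Two short steps then complete the derivation. First, I would reduce to the $r=1$ case actually covered by the preceding theorem. Either route works: invoke the isotomic-conjugate theorem above to trade Gergonne cevians for Nagel cevians (whose areal center function is $s-a=(b+c-a)/2$, i.e.\ precisely $h(b)+h(c)-h(a)$ with $h(x)=x$), or apply the $F^{r}$-corollary directly with $r=-1$. Second, the preceding theorem's conclusion is immediate with $h(x)=x$, giving
$$a_i+b_i \;=\; t, \qquad i=1,2,3.$$

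The main obstacle is terminological rather than analytic. The relation $a_i+b_i=t$ is the \emph{circumscriptible} condition in the Types-of-Tetrahedra table of Section \ref{section:typesOfTetahedra}, whereas the isodynamic condition recorded there is $a_ib_i=t$. These two conditions are algebraically independent (for instance $a_1=1,\,b_1=2,\,a_2=a_3=b_2=b_3=3/2$ satisfies the former but not the latter), so no further manipulation of the preceding theorem's output can be massaged into $a_ib_i=t$. The derivation thus establishes circumscriptibility, which matches the forward direction already proved in Section \ref{section:circumscriptible} and the classical Altshiller-Court statement; the word ``isodynamic'' in the corollary as worded is almost certainly a misprint for ``circumscriptible,'' and I would flag it for correction before publication.
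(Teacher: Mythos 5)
Your derivation is correct and is exactly the route the paper intends: apply the preceding converse theorem with $h(x)=x$ (passing from the Gergonne center $1/(b+c-a)$ to the Nagel center $b+c-a$ via the isotomic-conjugate theorem, or equivalently via the $F^{r}$ corollary with $r=-1$) to obtain $a_i+b_i=t$. You are also right that the word ``isodynamic'' in the corollary is a misprint for ``circumscriptible''; the paper's own Appendix E entry for the Gergonne point states the intersection criterion as circumscriptibility, citing the very same page of Altshiller-Court, so your flag is corroborated by the source.
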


\begin{corollary}
If cevians to the orthocenters concur, then the tetrahedron is orthocentric.
\end{corollary}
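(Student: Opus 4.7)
The plan is to deduce this as an immediate application of the preceding theorem, combined with the isotomic conjugate principle established earlier.

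First, I would recall the areal center function of the orthocenter. The trilinear coordinates of the orthocenter are $\sec A : \sec B : \sec C$, and multiplying by the side lengths and using the Law of Cosines $\cos A = (b^2+c^2-a^2)/(2bc)$ gives areal coordinates proportional to
\[
\frac{1}{b^2+c^2-a^2} : \frac{1}{c^2+a^2-b^2} : \frac{1}{a^2+b^2-c^2}.
\]
So the areal center function of the orthocenter is $F(a,b,c) = 1/(b^2+c^2-a^2)$.

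Second, I would invoke the earlier theorem stating that the cevians to $P_1,P_2$ on faces $1,2$ meet if and only if the cevians to their isotomic conjugates meet (together with Lemma on pairwise concurrence, extended to all four faces). The isotomic conjugate of a point with areal coordinates $(x,y,z)$ is $(1/x,1/y,1/z)$, so the isotomic conjugate of the orthocenter has areal center function $b^2+c^2-a^2$, which is exactly of the form $h(b)+h(c)-h(a)$ with $h(x)=x^2$. Therefore, if the cevians to the four face orthocenters concur, then so do the cevians to the four isotomic conjugates.

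Third, I would apply the preceding theorem with $h(x) = x^2$: since cevians to the centers with center function $h(b)+h(c)-h(a)$ concur, the edges must satisfy $h(a_i)+h(b_i) = a_i^2 + b_i^2 = t$ for $i=1,2,3$, which is precisely the algebraic condition defining an orthocentric tetrahedron. No real obstacle arises; the only check needed is the translation from trilinears to areals to confirm that the orthocenter's isotomic conjugate fits the $h(x)=x^2$ template of the preceding theorem.
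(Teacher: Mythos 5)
Your proposal is correct and follows essentially the same route as the paper: the paper treats this as an immediate corollary of the theorem on centers $h(b)+h(c)-h(a)$ with $h(x)=x^2$, using the fact (established in the isotomic-conjugate theorem) that the orthocenter, whose areal center function is $(b^2+c^2-a^2)^{-1}$, has the same concurrence condition as its isotomic conjugate $b^2+c^2-a^2$. Your only additions are the explicit trilinear-to-areal computation and the pairwise-concurrence remark, both of which are consistent with the paper's setup.
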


\begin{corollary}
If cevians to the $(1/b+1/c-1/a)$ centers concur, then the tetrahedron is harmonic.
\end{corollary}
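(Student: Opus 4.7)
The plan is to derive this corollary as an immediate specialization of the master theorem that directly precedes it, namely the theorem asserting that if the cevians to the face centers with areal center function $h(b)+h(c)-h(a)$ concur, then $h(a_i)+h(b_i)=t$ for $i=1,2,3$ and some constant $t$. All of the work has effectively been done in that theorem; what remains is to choose $h$ correctly and match conclusions.

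The key step is to take $h(x)=1/x$. Under this substitution the expression $h(b)+h(c)-h(a)$ becomes $1/b+1/c-1/a$, which is precisely the center function appearing in the hypothesis of the corollary. The hypothesis of the corollary therefore fulfills the hypothesis of the master theorem verbatim, so I would invoke the theorem and read off the conclusion $1/a_i+1/b_i=t$ for $i=1,2,3$.

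I would then close the argument by appealing to the table of tetrahedron types in Section~\ref{section:typesOfTetahedra}: the algebraic condition $1/a_i+1/b_i=\text{constant}$ is exactly the definition of a harmonic tetrahedron. Hence the tetrahedron is harmonic, as claimed.

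There is essentially no obstacle to overcome. The preceding theorem's proof is a purely algebraic manipulation of the concurrence condition with no regularity hypothesis imposed on $h$, and the reciprocal function is well defined on the strictly positive edge lengths of any tetrahedron, so admissibility of $h(x)=1/x$ is automatic. The mild bookkeeping point worth flagging is that this corollary is simply the $h(x)=1/x$ member of a parallel family of specializations of the same master theorem — with $h(x)=x^2$ giving the orthocentric case and $h(x)=x$ giving the additive (circumscriptible-type) case treated in the earlier corollaries — so the present result fits into an already-established pattern rather than requiring any new idea.
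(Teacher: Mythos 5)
Your proposal is correct and is exactly the paper's (implicit) argument: the corollary is the specialization $h(x)=1/x$ of the preceding theorem, whose conclusion $1/a_i+1/b_i=t$ is the defining algebraic condition of a harmonic tetrahedron. Nothing further is needed.
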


\begin{corollary}
For a fixed $r\neq 0$, if cevians to the $r$-power points concur, then the tetrahedron
is isodynamic.
\end{corollary}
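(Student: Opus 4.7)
The plan is to feed the areal center function $F(a,b,c)=a^{r}$ of the $r$-power point straight into the Concurrence Condition and let multiplicativity of the power map do all the work. Since the four cevians are assumed concurrent, the pair of cevians on faces $1$ and $2$ must meet, so the Concurrence Condition
\[
F(b_2,a_1,b_3)\,F(a_2,b_3,b_1)=F(a_1,b_3,b_2)\,F(b_1,a_2,b_3)
\]
must hold. For $F(a,b,c)=a^{r}$ this reduces to $b_2^{\,r}\,a_2^{\,r}=a_1^{\,r}\,b_1^{\,r}$, i.e.\ $(a_1b_1)^{r}=(a_2b_2)^{r}$. Since $r\neq 0$ and all edge lengths are positive, this forces $a_1b_1=a_2b_2$.

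Next, I would repeat the argument for the pairs of faces $(1,3)$ and $(2,3)$---or equivalently invoke the $4$-fold symmetry of the labeling scheme described in Section~\ref{section:faceCenters}---to obtain $a_1b_1=a_3b_3$ and $a_2b_2=a_3b_3$. The three equalities combine to give $a_ib_i=t$ for some constant $t$ and every $i=1,2,3$, which is precisely the algebraic condition defining an isodynamic tetrahedron listed in the table of Section~\ref{section:typesOfTetahedra}.

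There is essentially no obstacle here: because the power function is multiplicative, the Concurrence Condition factors cleanly into the isodynamic identity. The only subtlety worth noting is that the hypothesis $r\neq 0$ is genuinely necessary: at $r=0$ the $r$-power point degenerates to the centroid, whose cevians concur in every tetrahedron (by the centroid case of Section~\ref{section:tetrahedra}), so no constraint whatsoever on the edges could be inferred from concurrence in that case.
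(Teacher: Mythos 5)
Your proof is correct, but it takes a more direct route than the paper. The paper obtains this corollary by reduction to the preceding theorem (concurrence of cevians to the $h(b)+h(c)-h(a)$ centers forces $h(a_i)+h(b_i)=t$): it first re-identifies the power point's center function up to a symmetric factor ("the $a^r$ centers are the same as the $a^rb^rc^r/a^r$ centers") and then takes $h(x)=\log x$, so that the multiplicative condition $a_ib_i=\text{const}$ appears as the additive condition $\log a_i+\log b_i=t$. You instead substitute $F(a,b,c)=a^r$ straight into the Concurrence Condition for faces $1$ and $2$, getting $(a_1b_1)^r=(a_2b_2)^r$, and then use the symmetry of the face labelling to equate all three products $a_ib_i$ — which is verifiably correct (the pairwise conditions $z_1/w_1=z_2/w_2$, $y_1/w_1=y_3/w_3$, $x_2/w_2=x_3/w_3$ give exactly $(a_1b_1)^r=(a_2b_2)^r=(a_3b_3)^r$). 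What the paper's route buys is uniformity: the single $h$-theorem handles the circumscriptible, orthocentric, harmonic and isodynamic cases at once. What your route buys is self-containment and robustness: you avoid the slightly delicate projective re-identification of the power-point center function and the implicit division by $h(b_3)=\log b_3$ in the general theorem's proof, and the role of $r\neq 0$ (positivity of edge lengths lets you extract $a_1b_1=a_2b_2$ from the $r$-th powers) is completely transparent. One small caveat: your closing remark reads the $r$-power point via its \emph{areal} function $a^r$ (so $r=0$ is the centroid), which matches the convention used in Theorem~\ref{thm:power} and makes the hypothesis $r\neq 0$ exactly right; under the paper's table entry, where $a^r$ is the \emph{trilinear}, the areal exponent is $r+1$ and the degenerate "always concurrent" case would be $r=-1$ instead — the computation itself is unchanged.
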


This generalizes proposition 841 of \cite{Altshiller} (which was for the Symmedian point only).

\begin{proof}
The $a^r$ centers are the same as the $a^rb^rc^r/a^r$ centers, so the result follows
by taking $h(x)=\log x$.
\end{proof}

\section{General Results about Hyperbolic Lines}
\label{section:hyperbolicLines}

\begin{theorem}
Let $P_1=(0,y_1,z_1,w_1)$, $P_2=(x_2,0,z_2,w_2)$, and $P_3=(x_3,y_3,0,w_3)$
be three points on faces 1, 2, and 3 of the reference tetrahedron.
Then the condition that there is a line through vertex $A_4$ that meets
all three of the lines $A_iP_i$, $i=1,2,3$ is
$$z_1x_2y_3=y_1z_2x_3.$$
This line is called a spear line.
The spear line meets face $A_1A_2A_3$ at the point $(x_3y_1,y_1y_3,y_3z_1,0)$.
(This point is known as the spear trace.)
\end{theorem}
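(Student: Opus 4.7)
The plan is to project the three lines $A_iP_i$ from vertex $A_4$ onto face 4 and reduce the spear-line question to the concurrence of three cevians of $\triangle A_1A_2A_3$. Any line $\ell$ through $A_4$ meets the line $A_iP_i$ if and only if $\ell$ lies in the plane $A_4A_iP_i$, i.e. if and only if the point where $\ell$ crosses face 4 (the prospective spear trace) lies on the projection of $A_iP_i$ onto face 4. Hence a spear line through $A_4$ exists if and only if the three projections share a common point in face 4, and that point is the spear trace.

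Since $A_4=(0,0,0,1)$, projection from $A_4$ onto face 4 sends $(a,b,c,d)$ to $(a,b,c,0)$. The projection of line $A_1P_1$ is therefore the line in face 4 through $A_1=(1,0,0,0)$ and $(0,y_1,z_1,0)$, cut out by $z_1 s_2 - y_1 s_3 = 0$. Likewise the projections of $A_2P_2$ and $A_3P_3$ are the loci $z_2 s_1 - x_2 s_3 = 0$ and $y_3 s_1 - x_3 s_2 = 0$. These three cevians of $\triangle A_1A_2A_3$ meet in a common point if and only if the determinant of their coefficient matrix vanishes; expanding
$$\begin{vmatrix} 0 & z_1 & -y_1 \\ z_2 & 0 & -x_2 \\ y_3 & -x_3 & 0 \end{vmatrix} = y_1 z_2 x_3 - z_1 x_2 y_3$$
yields exactly the stated condition $z_1 x_2 y_3 = y_1 z_2 x_3$.

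To pin down the spear trace, I would verify directly that the candidate $(x_3 y_1,\, y_1 y_3,\, y_3 z_1,\, 0)$ satisfies the first equation identically, since $z_1(y_1 y_3) - y_1(y_3 z_1) = 0$, and the third equation identically, since $y_3(x_3 y_1) - x_3(y_1 y_3) = 0$. It then satisfies the middle equation $z_2 s_1 - x_2 s_3 = 0$ precisely when $z_2 x_3 y_1 = x_2 y_3 z_1$, which is the concurrence condition just derived. So under the hypothesis the three projections meet at $(x_3 y_1, y_1 y_3, y_3 z_1)$, giving the claimed spear trace.

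There is no serious obstacle here beyond routine algebra; the one thing to watch is that degenerate configurations (for instance $P_i$ lying on line $A_iA_4$, which would collapse a projection, or the three projections being concurrent at a point at infinity so that the spear line is parallel to face 4) are absorbed harmlessly by working projectively in tetrahedral coordinates, matching the convention of Lemma 12.1 that "intersect" also includes "be parallel".
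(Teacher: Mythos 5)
Your proposal is correct, but it takes a genuinely different route from the paper. The paper argues directly in space with its formula toolkit: it forms the plane $E$ through $A_4$ and the line $A_3P_3$ (Formula 16), intersects $A_1P_1$ and $A_2P_2$ with $E$ to obtain points $Q_1$ and $Q_2$ (Formula 18), imposes the collinearity of $A_4$, $Q_1$, $Q_2$ (Formula 3), and has the computer simplify the resulting equation to $z_1x_2y_3=y_1z_2x_3$; the spear trace is then found by intersecting the common line $A_4Q_1Q_2$ with the plane $A_1A_2A_3$. You instead project centrally from $A_4$ onto face 4, noting that a line through $A_4$ meets $A_iP_i$ precisely when its trace on face 4 lies on the projection of $A_iP_i$, so the existence of a spear line becomes the concurrence of the three lines $z_1s_2-y_1s_3=0$, $z_2s_1-x_2s_3=0$, $y_3s_1-x_3s_2=0$ in the areal coordinates of $\triangle A_1A_2A_3$; the vanishing of the $3\times3$ coefficient determinant gives exactly the stated condition, and the intersection of two of these lines yields the trace $(x_3y_1,y_1y_3,y_3z_1,0)$, with the third line passing through it under the hypothesis. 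Your reduction to a planar concurrence is more conceptual and fully hand-checkable, and it produces the spear trace as a by-product, whereas the paper's computation is a mechanical application of its tetrahedral-coordinate formulas suited to the computer pipeline used throughout; both treatments handle parallel and degenerate configurations only up to the projective conventions you note, which is consistent with the paper's stated conventions.
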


\begin{proof}
This result was found by computer but could easily be carried out by hand.
Formula 16 gives us the equation of the plane, $E$, through $A_4$ and $A_3P_3$.
Any spear line must clearly lie in this plane.
Formula 18 determines the point, $Q_1$, where line $A_1P_1$ meets plane $E$.
Then $A_4Q_1$ must be the desired spear line.
Similarly, we can find the point $Q_2$, where line $A_2P_2$ meets plane $E$.
Then $A_4Q_2$ must also be the desired spear line.
Thus the condition is that points $A_4$, $Q_1$, and $Q_2$ colline.
Formula 3 gives us this condition. Upon simplifying the result, the
computer came up with $z_1x_2y_3=y_1z_2x_3$ as the algebraic condition.
We can then find the intersection of
the common line $A_4Q_1Q_2$ and the plane $A_1A_2A_3$ to get the coordinates
of the spear trace.
\end{proof}

\textbf{Geometric interpretation.}\\
Let $P_1$, $P_2$, and $P_3$ be points on the faces opposite vertices $A_1$,
$A_2$, and $A_3$,
respectively, of tetrahedron $A_1A_2A_3A_4$.
Then there is a line through $A_4$ that meets
lines $A_1P_1$, $A_2P_2$ and $A_3P_3$ if and only if
$$[P_1A_2A_4][P_2A_3A_4][P_3A_1A_4]=
[P_1A_3A_4][P_2A_1A_4][P_3A_2A_4]$$
where $[XYZ]$ denotes the area of triangle $XYZ$.

Figure \ref{fig:hyperbolicCondition}a shows a top view of tetrahedron $A_1A_2A_3A_4$.
Figure \ref{fig:hyperbolicCondition}b then shows a point taken on faces 1, 2, and 3.
The condition is that the product of the areas of the
yellow triangles equals the product of the areas of the
green triangles.

\begin{figure}[h!t]
\centering
\includegraphics[width=1\linewidth]{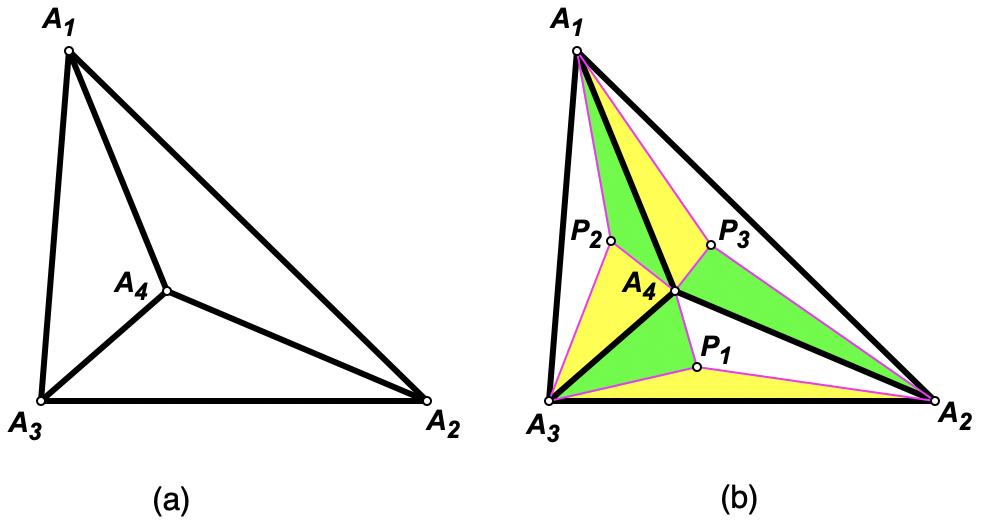}
\caption{There is a line through $A_4$ that meets $A_iP_i$, $i=1,2,3$ if and only if product of yellow areas equals product of green areas.}
\label{fig:hyperbolicCondition}
\end{figure}

We say that a center function $F(a,b,c)=af(a,b,c)$ is a hyperbolic center
function if the cevians from the vertices of a tetrahedron to these
centers on the opposite faces form a hyperbolic group.

\begin{proposition}[{\cite[p.~11]{Altshiller}}]
If four given mutually skew lines passing through
the four vertices of a tetrahedron are such that through each vertex
it is possible to draw a spear line meeting the three lines
passing through the remaining three vertices, then the four given
lines form a hyperbolic group.
\end{proposition}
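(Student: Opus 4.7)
The plan is to show that the four given lines all lie on a common hyperboloid of one sheet $\mathcal H$ as generators of a single ruling; the opposite ruling then automatically supplies infinitely many common transversals, which is the definition of a hyperbolic group.

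First I would observe that each spear line $m_i$ through vertex $A_i$ is itself a common transversal to all four of the given lines $\ell_1,\ell_2,\ell_3,\ell_4$: by construction it passes through $A_i\in\ell_i$, and by hypothesis it meets the other three $\ell_j$. In particular each $m_i$ meets the three mutually skew lines $\ell_1,\ell_2,\ell_3$, which determine a unique hyperboloid of one sheet $\mathcal H$ having $\ell_1,\ell_2,\ell_3$ as three generators of one ruling; any common transversal of these three lines is necessarily a generator of the opposite ruling of $\mathcal H$, so all four $m_i$ lie in that opposite ruling. Next I would note that $\ell_4$ meets each $m_i$ (at $A_4$ when $i=4$ and by hypothesis when $i=1,2,3$), and that distinct generators of the same ruling of a hyperboloid of one sheet are mutually skew, so distinct $m_i$'s produce distinct intersection points with $\ell_4$. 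Provided at least three of the $m_i$ are distinct, $\ell_4$ therefore carries at least three points of $\mathcal H$; since any line with three points on a nondegenerate quadric must lie on the quadric, $\ell_4\subset\mathcal H$, joining $\ell_1,\ell_2,\ell_3$ as a fourth generator of the first ruling. The opposite ruling of $\mathcal H$ is then an infinite family of common transversals to $\ell_1,\ldots,\ell_4$, giving the hyperbolic group.

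The hard part will be discarding degenerate coincidences among the $m_i$. A coincidence $m_i=m_j$ forces the common spear line to pass through both $A_i$ and $A_j$, hence to coincide with the tetrahedral edge $A_iA_j$; a triple coincidence is ruled out because no three vertices of a tetrahedron are collinear. The only real obstruction is therefore two disjoint pairs of coincidences (e.g.\ $m_1=m_2=A_1A_2$ and $m_3=m_4=A_3A_4$), leaving only two distinct spear-line transversals. In that case I would either exhibit a third common transversal directly by exploiting the planar containments the hypothesis then imposes on each $\ell_j$ (each $\ell_j$ becomes trapped in a face plane of the tetrahedron), or deduce the conclusion by a perturbation argument from the generic case in which all four $m_i$ are distinct.
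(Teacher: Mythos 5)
The paper itself gives no argument for this proposition --- it is imported verbatim from Altshiller-Court with a citation --- so what you are really supplying is the classical regulus proof, and your generic argument is indeed the standard one and is sound: every spear line $m_i$ meets $\ell_1,\ell_2,\ell_3$, hence lies in the opposite ruling of the unique doubly ruled quadric $\mathcal H$ through those three lines (only note that $\mathcal H$ can be a hyperbolic paraboloid rather than a hyperboloid of one sheet when $\ell_1,\ell_2,\ell_3$ are parallel to a common plane; this does not affect the reasoning), and as soon as three of the $m_i$ are distinct, $\ell_4$ picks up three distinct points of $\mathcal H$, lies on $\mathcal H$, and the opposite ruling gives infinitely many common transversals.

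The genuine problem is the degenerate case you postpone, and neither of your proposed repairs can work, because in that configuration the conclusion is simply false under the paper's definition of a hyperbolic group (infinitely many transversals). Take $A_1=(0,0,0)$, $A_2=(1,0,0)$, $A_3=(0,1,0)$, $A_4=(0,0,1)$; let $\ell_1$ join $A_1$ to $(0,\tfrac12,\tfrac12)$ and $\ell_2$ join $A_2$ to $(0,-1,2)$ (two points of line $A_3A_4$), and let $\ell_3$ join $A_3$ to $(2,0,0)$ and $\ell_4$ join $A_4$ to $(3,0,0)$ (two points of line $A_1A_2$). One checks these four lines are mutually skew, and the hypothesis holds: the edge-line $A_1A_2$ is a spear line through $A_1$ and through $A_2$, and the edge-line $A_3A_4$ is a spear line through $A_3$ and through $A_4$ --- exactly your ``two disjoint coincidences'' case. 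The quadric through $\ell_1,\ell_2,\ell_3$ is $2y^2-2z^2+xy-\tfrac52 xz-2y+2z=0$; parametrizing $\ell_4$ as $(3w,0,1-w)$, the left side restricts to $-\tfrac{11}{2}w(1-w)$, so $\ell_4$ meets the quadric only at $A_4$ and $(3,0,0)$ and does not lie on it. Since every common transversal of $\ell_1,\ell_2,\ell_3$ lies on this quadric, a transversal of all four must be a line of the quadric through one of those two points, so there are only finitely many (in fact just the two edge-lines): no third transversal exists to be ``exhibited directly,'' and no perturbation argument can establish a statement that fails. The honest fix is to add the nondegeneracy hypothesis your argument actually uses --- at least three of the four spear lines are distinct, i.e.\ they are not merely a pair of opposite edge-lines --- which is harmless for the paper's applications (there the lines are cevians to face centers with nonzero areal coordinates, and a spear line can degenerate to an edge only when a cevian foot lies on an edge) and is evidently what the classical statement tacitly assumes.
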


\begin{corollary}[Hyperbolic Condition]
The condition that $F(a,b,c)=af(a,b,c)$ be a hyperbolic center function is:
$$F(b_2,a_1,b_3)F(b_3,b_1,a_2)F(b_1,a_3,b_2)=
F(b_3,b_2,a_1)F(b_1,a_2,b_3)F(b_2,b_1,a_3).$$
\end{corollary}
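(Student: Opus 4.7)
The plan is to apply the preceding Theorem to exhibit a spear line through $A_4$ meeting the three cevians to the face centers $P_1,P_2,P_3$, then to lift the resulting criterion to every vertex using the 4-fold labeling symmetry of Section \ref{section:faceCenters}, and finally to invoke the quoted Altshiller-Court Proposition to conclude that the four cevians form a hyperbolic group.

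First I would write down the tetrahedral coordinates of the three face centers in the form prescribed by Section \ref{section:faceCenters}:
\begin{align*}
P_1 &= (0,\ F(b_3,b_2,a_1),\ F(b_2,a_1,b_3),\ F(a_1,b_3,b_2)),\\
P_2 &= (F(b_3,b_1,a_2),\ 0,\ F(b_1,a_2,b_3),\ F(a_2,b_3,b_1)),\\
P_3 &= (F(b_2,b_1,a_3),\ F(b_1,a_3,b_2),\ 0,\ F(a_3,b_2,b_1)).
\end{align*}
Reading off $y_1=F(b_3,b_2,a_1)$, $z_1=F(b_2,a_1,b_3)$, $x_2=F(b_3,b_1,a_2)$, $z_2=F(b_1,a_2,b_3)$, $x_3=F(b_2,b_1,a_3)$, $y_3=F(b_1,a_3,b_2)$ and inserting them into the spear-line criterion $z_1x_2y_3=y_1z_2x_3$ supplied by the preceding Theorem produces, term by term, exactly the equation displayed in the corollary. (The extra edge-length factors coming from $F=af$ appear as a common $b_1b_2b_3$ on both sides and cancel, so the identity is the same whether written in terms of $F$ or in terms of $f$.) This shows the Hyperbolic Condition is equivalent to the existence of a spear line through $A_4$ meeting $A_1P_1$, $A_2P_2$, $A_3P_3$.

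To upgrade this to a spear line through every vertex, I would invoke the 4-fold symmetry of the labeling set up in Section \ref{section:faceCenters}: the mapping $A_1\to A_2\to A_3\to A_4$ cyclically permutes the vertices and induces the prescribed permutation of the six edge labels $(a_1,a_2,a_3,b_1,b_2,b_3)$. Because the Hyperbolic Condition is a polynomial identity in these six variables that must hold for every tetrahedron, the same identity, read after this relabeling, guarantees spear lines through $A_1$, $A_2$, and $A_3$ as well. The Altshiller-Court Proposition then concludes that the four cevians form a hyperbolic group, which by definition says $F$ is a hyperbolic center function.

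The main obstacle I foresee is the bookkeeping in the symmetry step: one must verify that the cyclic vertex permutation, when composed with the specific edge relabeling of Section \ref{section:faceCenters}, really does carry the spear-line polynomial obtained at $A_4$ onto the polynomials at $A_3$, $A_2$, $A_1$, rather than producing three genuinely distinct polynomial conditions. Once that is in place, the rest of the argument is pure substitution into the preceding Theorem followed by a citation of the Proposition.
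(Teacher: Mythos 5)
Your proposal is correct and follows essentially the same route as the paper, which likewise substitutes the Section 2 face-center coordinates into the spear-line criterion $z_1x_2y_3=y_1z_2x_3$ of the preceding theorem, invokes the labeling symmetry so that only one spear line needs to be checked, and then cites the Altshiller-Court proposition. Your explicit substitution (and the harmless remark that the common factor $b_1b_2b_3$ cancels, so the condition reads the same in $F$ or $f$) simply writes out in detail what the paper compresses into a one-sentence justification.
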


This follows from the coordinates for the corresponding center
on each face found in section \ref{section:faceCenters}
and the preceding Proposition.
Note that symmetry conditions imply that we need only find the
condition that one spear line exists (instead of all 4).

\begin{corollary}
In an isosceles tetrahedron, all center functions are hyperbolic.
\end{corollary}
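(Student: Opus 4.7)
The plan is to apply the Hyperbolic Condition from the preceding corollary directly, using the algebraic defining property of an isosceles tetrahedron together with the well-known symmetry of a triangle center function in its last two arguments.

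First I would recall that in an isosceles tetrahedron we have $a_i=b_i$ for $i=1,2,3$ (from the table in Section \ref{section:typesOfTetahedra}). Substituting $b_i=a_i$ into the Hyperbolic Condition
\[
F(b_2,a_1,b_3)F(b_3,b_1,a_2)F(b_1,a_3,b_2)=F(b_3,b_2,a_1)F(b_1,a_2,b_3)F(b_2,b_1,a_3)
\]
yields the requirement
\[
F(a_2,a_1,a_3)\,F(a_3,a_1,a_2)\,F(a_1,a_3,a_2)=F(a_3,a_2,a_1)\,F(a_1,a_2,a_3)\,F(a_2,a_1,a_3).
\]

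Next I would invoke the fact that every triangle center function is bisymmetric in its last two arguments, i.e.\ $F(a,b,c)=F(a,c,b)$ (this is part of the definition of a center function, as reviewed in Appendix \ref{section:trilinearCoordinates}). Applying this to each factor on the left-hand side transforms it into
\[
F(a_2,a_3,a_1)\,F(a_3,a_2,a_1)\,F(a_1,a_2,a_3),
\]
and applying it to the third factor of the right-hand side turns $F(a_2,a_1,a_3)$ into $F(a_2,a_3,a_1)$, so the right-hand side also equals
\[
F(a_3,a_2,a_1)\,F(a_1,a_2,a_3)\,F(a_2,a_3,a_1).
\]
Both sides are now the same product in different orders, so the identity holds and the Hyperbolic Condition is satisfied for every center function $F$.

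There is no real obstacle here: the entire argument is a short symbolic manipulation once one has the Hyperbolic Condition in hand. The only subtlety worth flagging is the implicit use of bisymmetry of $F$, which is what forces the three factors on each side to match up after the substitution $b_i=a_i$; without that symmetry the two sides would merely be cyclic rearrangements of related but distinct expressions. A one-line alternative route would be to cite Theorem~\ref{section:isosceles} part (d) (which already asserts that the cevians always form a hyperbolic group in an isosceles tetrahedron), but giving the direct verification via the Hyperbolic Condition is cleaner and matches the style of the rest of Section \ref{section:hyperbolicLines}.
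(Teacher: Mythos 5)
Your proposal is correct and follows the paper's own argument exactly: substitute $b_i=a_i$ into the Hyperbolic Condition and observe that the resulting equation is an identity because a center function satisfies $F(a,b,c)=F(a,c,b)$. You merely spell out the symmetry step factor by factor, which the paper leaves as "clearly an identity."
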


\begin{proof}
When $b_i=a_i$, the hyperbolic condition becomes
$$F(a_2,a_1,a_3)F(a_3,a_1,a_2)F(a_1,a_3,a_2)=
F(a_3,a_2,a_1)F(a_1,a_2,a_3)F(a_2,a_1,a_3)$$
which is clearly an identity since 
a center function $F(a,b,c)$ is symmetric in $b$ and $c$.
\end{proof}

\begin{theorem}
If  $F(a,b,c)$ is a hyperbolic center function,
then so is $a^rF(a,b,c)^q$.
\end{theorem}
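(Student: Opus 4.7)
The plan is to apply the Hyperbolic Condition from the preceding corollary directly to the proposed new center function $G(a,b,c)=a^rF(a,b,c)^q$ and check, by inspection, that the resulting identity reduces to the Hyperbolic Condition for $F$.

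Concretely, I would write out both sides of
$$G(b_2,a_1,b_3)G(b_3,b_1,a_2)G(b_1,a_3,b_2)=G(b_3,b_2,a_1)G(b_1,a_2,b_3)G(b_2,b_1,a_3).$$
Each $G$-factor splits into an "$a^r$" piece (the first argument to the $r$) times an "$F(\cdot)^q$" piece. Separating these gives the LHS as
$$(b_2b_3b_1)^r\bigl[F(b_2,a_1,b_3)F(b_3,b_1,a_2)F(b_1,a_3,b_2)\bigr]^q$$
and the RHS as
$$(b_3b_1b_2)^r\bigl[F(b_3,b_2,a_1)F(b_1,a_2,b_3)F(b_2,b_1,a_3)\bigr]^q.$$

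Now two observations finish the proof. First, the "power" prefactors are literally equal, because the multisets $\{b_2,b_3,b_1\}$ and $\{b_3,b_1,b_2\}$ of first arguments are the same, both yielding $(b_1b_2b_3)^r$. Second, raising each side to the $q$th power preserves equality, and the bracketed expressions are exactly the two sides of the Hyperbolic Condition for $F$, which are equal by the hypothesis that $F$ is a hyperbolic center function. Hence the two sides agree, so $G$ is a hyperbolic center function as well.

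There is essentially no obstacle: the identity is a one-line bookkeeping check once the Hyperbolic Condition is written out. The only mild subtlety is noticing that the first arguments on the two sides, read in order, form the same cyclic permutation of $(b_1,b_2,b_3)$, so the $r$-power factors drop out identically rather than requiring any nontrivial symmetry of $F$. As a side remark, the argument shows something slightly stronger: the set of hyperbolic center functions is closed under pointwise multiplication and under raising to a fixed real power, and is invariant under multiplication by any function of the form $a^r$ (or, by the same proof with the roles of arguments permuted, by $(abc)^r$).
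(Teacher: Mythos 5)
Your proposal is correct and is essentially the paper's own proof: both write out the hyperbolic condition for $a^rF(a,b,c)^q$, observe that the $b_1^r b_2^r b_3^r$ prefactors are identical on the two sides, and note that the remaining factors are the $q$th powers of the two sides of the hyperbolic condition for $F$. Nothing further is needed.
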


\begin{proof}
The hyperbolic condition becomes
$$b_2^{r}F(b_2,a_1,b_3)^qb_3^{r}F(b_3,b_1,a_2)^qb_1^{r}F(b_1,a_3,b_2)^q$$
$$\qquad=
b_3^{r}F(b_3,b_2,a_1)^qb_1^{r}F(b_1,a_2,b_3)^qb_2^{r}F(b_2,b_1,a_3)^q$$
which is an immediate consequence of the original condition.
\end{proof}

\begin{theorem}
If cevians to corresponding centers on each face of a tetrahedron form a hyperbolic group,
then so do cevians to the isotomic conjugates of those centers.
\end{theorem}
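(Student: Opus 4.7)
The plan is to mimic the earlier isotomic-conjugate argument (the one given for concurrence of cevians) but with the Hyperbolic Condition instead of the Concurrence Condition. Specifically, I would invoke the Hyperbolic Condition corollary, which says that a center function $F$ produces cevians forming a hyperbolic group precisely when
\[
F(b_2,a_1,b_3)\,F(b_3,b_1,a_2)\,F(b_1,a_3,b_2)
= F(b_3,b_2,a_1)\,F(b_1,a_2,b_3)\,F(b_2,b_1,a_3).
\]

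Next I would recall that, in areal coordinates, the isotomic conjugate of $(x,y,z)$ is $(1/x,1/y,1/z)$, so if $F(a,b,c)$ is the (areal) center function of the original center, then $1/F(a,b,c)$ is the center function of its isotomic conjugate. Substituting $1/F$ into the Hyperbolic Condition turns it into
\[
\frac{1}{F(b_2,a_1,b_3)\,F(b_3,b_1,a_2)\,F(b_1,a_3,b_2)}
= \frac{1}{F(b_3,b_2,a_1)\,F(b_1,a_2,b_3)\,F(b_2,b_1,a_3)},
\]
which is the reciprocal of the hypothesis; hence it holds identically whenever the original condition does. Applying the Hyperbolic Condition corollary in the reverse direction then yields that the cevians to the isotomic conjugates also form a hyperbolic group.

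The only mild subtlety I anticipate is making sure that no coordinate of $F$ evaluated on the relevant arguments vanishes (so that $1/F$ is well-defined as a center function); this is the same non-degeneracy assumption already implicit when taking isotomic conjugates, and since we are working symbolically with center functions, one simply notes that a vanishing coordinate would place the face center on a side of the face and can be excluded generically. With that caveat, the proof reduces to a one-line reciprocation of the Hyperbolic Condition, exactly parallel to the earlier theorem showing that concurrence of cevians is preserved under isotomic conjugation.
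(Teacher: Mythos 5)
Your proposal is correct and is essentially the paper's own argument: take the Hyperbolic Condition for the center function $F$, note that the isotomic conjugate corresponds to replacing $F$ by $1/F$, and observe that the resulting condition is just the reciprocal of the original identity. The extra non-vanishing caveat you mention is harmless but not something the paper dwells on, since the conditions are treated as identities in the (nonzero) center function.
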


\begin{proof}
Since in areal coordinates, the isotomic conjugate of a
center $(x,y,z)$ is $(1/x,1/y,1/z)$,
the hyperbolic condition becomes
$$\frac{1}{F(b_2,a_1,b_3)}\frac{1}{F(b_3,b_1,a_2)}\frac{1}{F(b_1,a_3,b_2)}=
\frac{1}{F(b_3,b_2,a_1)}\frac{1}{F(b_1,a_2,b_3)}\frac{1}{F(b_2,b_1,a_3)}$$
which is equivalent to the original condition.
\end{proof}

\begin{corollary}[{\cite[p.~332]{Altshiller}}]
If cevians to corresponding centers on each face of a tetrahedron form a hyperbolic group,
then so do cevians to the isogonal conjugates of those centers.
\end{corollary}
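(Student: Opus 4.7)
The plan is to exhibit the isogonal conjugate as a simple algebraic transformation of the original center function, then invoke the two preceding general results on hyperbolic center functions. Recall that in areal (barycentric) coordinates, the isogonal conjugate of a point $(x,y,z)$ in a triangle with sides $a,b,c$ is $(a^2/x,\,b^2/y,\,c^2/z)$. Hence, if the original center has center function $F(a,b,c)$, its isogonal conjugate has center function
$$\widehat F(a,b,c)=\frac{a^2}{F(a,b,c)}=a^2\cdot\frac{1}{F(a,b,c)}.$$
The factor $1/F(a,b,c)$ is precisely the center function of the isotomic conjugate.

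From this it is immediate that $\widehat F$ is the product of (i) the isotomic conjugate of $F$ and (ii) the factor $a^2$, i.e. $\widehat F(a,b,c) = a^{2}\,\widetilde F(a,b,c)^{1}$, in the form $a^{r}F(a,b,c)^{q}$ with $r=2$, $q=1$. I would first apply the immediately preceding theorem, which says that if $F$ is hyperbolic then so is its isotomic conjugate $\widetilde F$. Then I would apply the earlier theorem that if $F$ is hyperbolic then so is $a^{r}F(a,b,c)^{q}$ for any $r$ and $q$. Combining these two preservation results with $r=2,q=1$ shows that $\widehat F$ is a hyperbolic center function.

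As a sanity check one can verify directly from the Hyperbolic Condition: substituting $\widehat F(a,b,c)=a^2/F(a,b,c)$ into
$$\widehat F(b_2,a_1,b_3)\widehat F(b_3,b_1,a_2)\widehat F(b_1,a_3,b_2)=\widehat F(b_3,b_2,a_1)\widehat F(b_1,a_2,b_3)\widehat F(b_2,b_1,a_3),$$
both sides acquire the common symmetric numerator $b_1^2b_2^2b_3^2$, and what remains is the reciprocal of the Hyperbolic Condition for $F$, which holds by hypothesis.

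There is essentially no obstacle here: the difficulty was absorbed into the earlier isotomic and ``$a^{r}F^{q}$'' theorems, and the only thing to watch is that the areal (not trilinear) form of the isogonal conjugate is being used, so that the multiplier is exactly $a^{2}$ and the extra factor is symmetric in the three arguments, making the bookkeeping of the $b_i^{2}$'s across the LHS and RHS automatic.
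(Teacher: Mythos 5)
Your proposal is correct and takes essentially the same route as the paper: both express the isogonal conjugate in areal coordinates as the isotomic conjugate multiplied by a power of $a$, and then invoke the two preceding theorems (preservation under isotomic conjugation and under $F\mapsto a^{r}F(a,b,c)^{q}$). The only cosmetic difference is that the paper writes the multiplier as $a^{-2}$ rather than your (standard) $a^{2}$, which is immaterial since either exponent is covered by the $a^{r}F^{q}$ theorem.
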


\begin{proof}
This is because in areal coordinates, the isogonal conjugate of a center $(x,y,z)$
is $(a^{-2}/x,b^{-2}/y,c^{-2}/z)$. Thus the result follows from the
previous two theorems.
\end{proof}

\begin{theorem}
Suppose the edges of a tetrahedron satisfy the condition
$h(a_i)+h(b_i)=t$, $i=1,2,3$, for some function $h(x)$ and some constant, $t$.
Then the cevians to the face centers with center function
$[h(b)+h(c)-h(a)]^q$ form a hyperbolic group.
\end{theorem}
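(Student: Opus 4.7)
The plan is to verify the Hyperbolic Condition directly for the center function $F(a,b,c) = [h(b)+h(c)-h(a)]^q$. Since the Hyperbolic Condition (the Corollary stated just above) already encodes exactly what is required for the cevians to form a hyperbolic group, the theorem reduces to checking the single algebraic identity
$$F(b_2,a_1,b_3)\,F(b_3,b_1,a_2)\,F(b_1,a_3,b_2) \;=\; F(b_3,b_2,a_1)\,F(b_1,a_2,b_3)\,F(b_2,b_1,a_3)$$
under the assumption $h(a_i)+h(b_i)=t$ for $i=1,2,3$.

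The key computational step is to use the hypothesis to eliminate every $h(b_i)$ in favor of $t-h(a_i)$. Each of the six bracketed expressions has the shape $h(\cdot)+h(\cdot)-h(\cdot)$ evaluated at a triple containing exactly one $a_j$ and two $b_k$'s (or two $a_j$'s and one $b_k$, depending on the factor). After substitution, the additive constants $t$ cancel in each factor, and what remains is a signed sum of three terms $h(a_1), h(a_2), h(a_3)$. For instance, $F(b_2,a_1,b_3) = [h(a_1)+h(b_3)-h(b_2)]^q$ collapses to $[h(a_1)+h(a_2)-h(a_3)]^q$, and a similar reduction applies to each of the remaining five factors.

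Once this substitution is performed uniformly, I would observe that the three factors appearing on the left-hand side are exactly
$$[h(a_1)+h(a_2)-h(a_3)]^q,\quad [-h(a_1)+h(a_2)+h(a_3)]^q,\quad [h(a_1)-h(a_2)+h(a_3)]^q,$$
and the three factors on the right-hand side are the same three expressions, merely rearranged. So both sides agree as products, and the Hyperbolic Condition holds identically.

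The main obstacle is really just careful bookkeeping of indices — making sure each of the six $F$-evaluations is simplified correctly and that the resulting multisets of three symmetric combinations match. This mirrors the earlier theorem (for concurrence of cevians to centers with areal function $[h(b)+h(c)-h(a)]^r$) under the identical edge condition, whose proof proceeds by the same substitution trick; the present statement is its analog with the Hyperbolic Condition replacing the Concurrence Condition, so no new ingredient beyond the substitution $h(b_i)\mapsto t-h(a_i)$ is needed.
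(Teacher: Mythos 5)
Your proposal is correct and follows essentially the same route as the paper: both verify the Hyperbolic Condition for $F(a,b,c)=[h(b)+h(c)-h(a)]^q$ by using $h(b_i)=t-h(a_i)$ and observing that the three factors on each side form the same multiset, hence an identity. The only cosmetic difference is that you substitute for every $h(b_i)$, reducing each factor to a signed sum of $h(a_1),h(a_2),h(a_3)$ with the $t$'s cancelling, whereas the paper eliminates only the negated terms; also note that in fact each of the six triples contains exactly one $a_j$ and two $b_k$'s, so your parenthetical alternative case never occurs.
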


\begin{proof}
The hyperbolic condition becomes
$$[h(a_1)+h(b_3)-h(b_2)]^q[h(b_1)+h(a_2)-h(b_3)][h(a_3)+h(b_2)-h(b_1)]^q=$$
$$[h(b_2)+h(a_1)-h(b_3)]^q[h(a_2)+h(b_3)-h(b_1)][h(b_1)+h(a_3)-h(b_2)]^q$$
which immediately follows from
$$[h(a_1)+h(b_3)+h(a_2)-t]^q[h(b_1)+h(a_2)+h(a_3)-t]^q[h(a_3)+h(b_2)+h(a_1)-t]^q=$$
$$[h(b_2)+h(a_1)+h(a_3)-t]^q[h(a_2)+h(b_3)+h(a_1)-t]^q[h(b_1)+h(a_3)+h(a_2)-t]^q$$
which is identically true.
\end{proof}

\begin{corollary}
In a circumscriptible tetrahedron, the cevians to the face centers
with center function $a^r(b+c-a)$ form a hyperbolic group.
This includes the Gergonne point, the Nagel point, the Mittenpunkt,
the X41 point, and their isogonal and isotomic conjugates.
\end{corollary}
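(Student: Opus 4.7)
The plan is to assemble the result directly from three theorems already established in the paper. First I would observe that in a circumscriptible tetrahedron the defining condition $a_i+b_i=t$ is precisely $h(a_i)+h(b_i)=t$ for the function $h(x)=x$, so the hypothesis of the preceding theorem is satisfied.

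Applying that preceding theorem with $h(x)=x$ and exponent $q=1$ yields that the center function $F(a,b,c)=b+c-a$ is a hyperbolic center function for the circumscriptible tetrahedron. I would then invoke the earlier theorem stating that if $F(a,b,c)$ is hyperbolic then so is $a^r F(a,b,c)^q$ for any exponents $r,q$. Specializing to $q=1$ upgrades $b+c-a$ to $a^r(b+c-a)$, which is the main assertion of the corollary.

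The remaining claim is that the listed named centers (Nagel, Mittenpunkt, X41, Gergonne, together with their isogonal and isotomic conjugates) fall under this umbrella. For each I would just identify the appropriate exponent $r$: Nagel with areal first coordinate proportional to $b+c-a$ corresponds to $r=0$; Mittenpunkt, with trilinear $b+c-a$ (hence areal $a(b+c-a)$), corresponds to $r=1$; X41, with trilinear $a^2(b+c-a)$, corresponds to $r=3$. The Gergonne point is the isotomic conjugate of the Nagel point, and is covered by the earlier theorem that isotomic conjugation preserves hyperbolic groups; likewise, all the isogonal and isotomic conjugates in the list are handled by that theorem together with its isogonal corollary.

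There is no serious obstacle, since the proof is essentially a chain of substitutions into results already in hand; the only care needed is bookkeeping the distinction between trilinear and areal (first) coordinates so that each named center is correctly matched to an exponent $r$ or identified as a conjugate of such a center.
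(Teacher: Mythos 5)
Your proposal is correct and follows the same route the paper intends: apply the preceding theorem with $h(x)=x$ (and $q=\pm1$), upgrade via the theorem that $a^rF(a,b,c)^q$ is hyperbolic whenever $F$ is, and absorb the named centers and their conjugates via the isotomic-conjugate theorem and its isogonal corollary. Your exponent bookkeeping (Nagel $r=0$, Mittenpunkt $r=1$, X41 $r=3$, Gergonne as the Nagel isotomic conjugate) is accurate, so nothing is missing.
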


\begin{corollary}
In an orthocentric tetrahedron, the cevians to the face centers
with center function $a^r(b^2+c^2-a^2)$ form a hyperbolic group.
This includes the orthocenter, circumcenter, the crucial point,
the X25 point, the X48 point, and their isogonal and isotomic conjugates.
\end{corollary}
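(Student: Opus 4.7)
The plan is to deduce this corollary directly from the immediately preceding theorem together with the earlier closure properties established for hyperbolic center functions. An orthocentric tetrahedron is characterized algebraically by $a_i^2+b_i^2=t$ for $i=1,2,3$, which is exactly the hypothesis of the preceding theorem with $h(x)=x^2$. Applying that theorem yields that, in any orthocentric tetrahedron, the cevians to the face centers whose center function is $(b^2+c^2-a^2)^q$ form a hyperbolic group for every exponent $q$.

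Next I would invoke the earlier theorem asserting that if $F(a,b,c)$ is a hyperbolic center function, then so is $a^r F(a,b,c)^q$. Taking $F(a,b,c)=b^2+c^2-a^2$ immediately promotes the previous conclusion to: every center function of the form $a^r(b^2+c^2-a^2)^q$ is hyperbolic on an orthocentric tetrahedron. This is the content of the corollary; the closure under isotomic and isogonal conjugation (which is needed to pick up the "and their conjugates" part of the statement) follows from the two earlier theorems that established exactly this preservation property for the hyperbolic condition.

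It then remains to verify that each named center actually fits this template. The approach is to take Kimberling's trilinear $f(a,b,c)$, convert to the areal center function by multiplying by $a$ and canceling common factors across all three coordinates, and substitute $\cos A=(b^2+c^2-a^2)/(2bc)$ (per the recipe in Section 2). For the circumcenter (trilinear $\cos A$) this produces the areal function $a^2(b^2+c^2-a^2)$, i.e. $r=2$, $q=1$; for the orthocenter (trilinear $\sec A$) the common factor $2abc$ cancels, leaving $(b^2+c^2-a^2)^{-1}$, i.e.\ $r=0$, $q=-1$. The crucial point, $X_{25}$, and $X_{48}$ must be reduced in the same mechanical way.

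The main obstacle is not conceptual but bookkeeping: the conversion of the trilinear forms of the crucial point, $X_{25}$, and $X_{48}$ into the normalized areal shape $a^r(b^2+c^2-a^2)^q$. These involve mixed rational expressions in $\cos A$ and $\sin A$, and one has to carry out the substitutions $\sin A=2K/(bc)$, $\cos A=(b^2+c^2-a^2)/(2bc)$, rationalize denominators of the form $y+zK$ as described in Section 2, and identify a common factor in the three areal coordinates before the $a^r(b^2+c^2-a^2)^q$ pattern becomes visible. Once each center is so matched, the corollary follows without any further geometric argument.
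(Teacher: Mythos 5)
Your proposal is correct and follows essentially the same route the paper intends: specialize the preceding theorem with $h(x)=x^2$ (the orthocentric condition $a_i^2+b_i^2=t$) to get that $(b^2+c^2-a^2)^q$ is hyperbolic, then use the closure theorems ($a^rF(a,b,c)^q$ is hyperbolic whenever $F$ is, and hyperbolicity is preserved under isotomic and isogonal conjugation) and identify the named centers as instances of this form. The remaining trilinear-to-areal conversions you flag are exactly the routine verifications the paper leaves implicit.
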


\begin{corollary}
In a harmonic tetrahedron, the cevians to the face centers
with center function $a^r(1/b+1/c-1/a)$ form a hyperbolic group.
This includes the
X43 point, the X102 point, the X117 point,
and their isogonal and isotomic conjugates.
\end{corollary}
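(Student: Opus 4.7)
The plan is to derive this corollary by specializing the preceding general theorem about the condition $h(a_i)+h(b_i)=t$ and then combining it with the earlier closure results for hyperbolic center functions.

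First I would observe that, by the definition of a harmonic tetrahedron in the types-of-tetrahedra table, the edges satisfy $1/a_i+1/b_i=t$ for $i=1,2,3$. So the hypothesis of the previous theorem is met with $h(x)=1/x$. Applying that theorem directly yields that the cevians to the face centers with center function $\bigl(1/b+1/c-1/a\bigr)^q$ form a hyperbolic group for every $q$, and in particular for $q=1$.

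Next I would upgrade this to the full family $a^r(1/b+1/c-1/a)$ by invoking the earlier theorem stating that if $F(a,b,c)$ is a hyperbolic center function then so is $a^rF(a,b,c)^q$. Taking $F(a,b,c)=1/b+1/c-1/a$ and $q=1$ gives the claim. The enumeration of specific named centers is then a matter of matching exponents: the values $r=0$, $r=1$, and $r=-1$ produce X43, X102, and X117 respectively, according to the identifications $X43=1/b+1/c-1/a$, $X102=a(1/b+1/c-1/a)$, and $X117=(1/b+1/c-1/a)/a$ recorded just after Theorem on harmonic hyperbolic groups.

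Finally, to include the isogonal and isotomic conjugates at no extra cost, I would cite the theorem that hyperbolic center functions are closed under isotomic conjugation together with its corollary for isogonal conjugation; these apply uniformly to every member of the $a^r(1/b+1/c-1/a)$ family. There is no real obstacle in this argument beyond confirming the substitution $h(x)=1/x$ is admissible in the previous theorem (which requires nothing of $h$ except that it be a well-defined function on positive reals) and checking the trilinear identifications of X43, X102, X117, so the proof reduces essentially to a one-line application of the preceding theorem plus the closure lemmas.
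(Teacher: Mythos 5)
Your proposal is correct and follows essentially the same route the paper intends: the corollary appears there without a separate proof precisely because it is the specialization $h(x)=1/x$ of the preceding theorem (harmonic means $1/a_i+1/b_i=t$), combined with the closure theorem that $a^rF(a,b,c)^q$ is hyperbolic whenever $F$ is, and the isotomic/isogonal conjugation closure results. The only minor caveat is that pinning X43, X102, X117 to exactly $r=0,1,-1$ depends on whether the center function is read in trilinear or areal form, but since $r$ (and $q=\pm1$) range freely in the family this does not affect the conclusion.
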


\begin{theorem}
The power points are the only triangle centers with the property that in any tetrahedron,
the cevians to these face centers form a hyperbolic group.
\end{theorem}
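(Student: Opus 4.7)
The plan is to mirror the strategy of Theorem \ref{thm:power}, but the analysis is more intricate here because the hypothesis ``any tetrahedron'' makes the six edge lengths $a_1,a_2,a_3,b_1,b_2,b_3$ fully independent (rather than satisfying $a_ib_i=t$ as in the isodynamic case). Writing $F(a,b,c)=af(a,b,c)$ and using the symmetry $F(a,b,c)=F(a,c,b)$ to push each $a_i$ into the third slot, the Hyperbolic Condition stated in the preceding corollary becomes
$$G(b_2,b_3,a_1)\,G(b_3,b_1,a_2)\,G(b_1,b_2,a_3)=1,$$
where $G(x,y,z):=F(x,y,z)/F(y,x,z)$, and this identity must hold for all six variables.

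First I fix $b_1,b_2,b_3$ and let $a_1,a_2,a_3$ vary independently. Each factor depends on only one of the $a_i$'s, so each factor must be constant in its $a_i$-argument; hence $G(x,y,z)$ is independent of $z$, and I may write $G(x,y)$. The surviving identity $G(b_2,b_3)G(b_3,b_1)G(b_1,b_2)=1$, combined with the obvious relation $G(x,y)G(y,x)=1$, is a cocycle whose trivialization is standard: setting $h(x):=G(x,c_0)$ for any fixed $c_0$ gives $G(x,y)=h(x)/h(y)$. Substituting back shows that $K(x,y,z):=F(x,y,z)/h(x)$ is invariant under swapping $x\leftrightarrow y$; together with $F(x,y,z)=F(x,z,y)$, this forces $K$ to be totally symmetric in $(x,y,z)$. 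Consequently
$$F(a,b,c):F(b,c,a):F(c,a,b)=h(a):h(b):h(c).$$

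The final step is to show $h$ must be a power function, and this is where I invoke the homogeneity of $F$. From $F(tx,ty,tz)=t^d F(x,y,z)$ I get $h(tx)K(tx,ty,tz)=t^d h(x) K(x,y,z)$; interchanging $x$ and $y$ and using total symmetry of $K$ shows that $h(tx)/h(x)$ is independent of $x$. Writing this common value as $\phi(t)$ and setting $x=1$ produces the multiplicative equation $\phi(tx)=\phi(t)\phi(x)$, so the Power Lemma delivers $\phi(t)=t^r$ and $h(x)=h(1)\,x^r$. The ratio $h(a):h(b):h(c)$ is then $a^r:b^r:c^r$ and $F$ represents a power point.

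The main obstacle I anticipate is executing the functional-equation bookkeeping cleanly: the decoupling of the $a_i$'s from the $b_i$'s that strips $G$ of its third argument relies essentially on having all six edge variables free, and one must then check that $K$ ends up totally symmetric (not merely symmetric in two pairs) before homogeneity can be used. Once those structural reductions are in place, the deduction that $h$ is a power function is short and parallels the isodynamic argument in Theorem \ref{thm:power}.
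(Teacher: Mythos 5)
Your proposal is correct and follows essentially the same route as the paper's proof: both reduce the hyperbolic condition to showing that the ratio $F(x,y,z)/F(y,x,z)$ is independent of the third argument, trivialize the resulting cocycle to write it as $h(x)/h(y)$, and then combine homogeneity of $F$ with the Power Lemma to force $h(x)=h(1)x^r$. Your intermediate factorization $F(x,y,z)=h(x)K(x,y,z)$ with $K$ totally symmetric is only a cosmetic repackaging of the paper's final computation, not a different argument.
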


\begin{proof}  
Let $F(a,b,c)=af(a,b,c)$ be a hyperbolic center function. Then $F$ must
satisfy the functional equation
$$F(b_2,a_1,b_3)F(b_3,b_1,a_2)F(b_1,a_3,b_2)=
F(b_3,b_2,a_1)F(b_1,a_2,b_3)F(b_2,b_1,a_3).$$
This equation must be true
for all values of $a_1$, $a_2$, $a_3$, $b_1$, $b_2$, and $b_3$.
Rewriting this as
\begin{equation}
\label{eq:G1}
\frac{F(b_2,a_1,b_3)}{F(b_3,b_2,a_1)}=\frac{F(b_1,a_2,b_3)}{F(b_3,b_1,a_2)}
\cdot \frac{F(b_2,b_1,a_3)}{F(b_1,a_3,b_2)}
\end{equation}
shows that $F(b_2,a_1,b_3)/ F(b_3,b_2,a_1)$ is independent of $a_1$,
so we may define
\begin{equation}
\label{eq:G2}
G(b_2,b_3)=\frac{F(b_2,a_1,b_3)}{F(b_3,b_2,a_1)}.
\end{equation}
Substituting this in equation (\ref{eq:G1}) yields
$$G(b_2,b_3)=\frac{G(b_1,b_3)}{G(b_1,b_2)}$$
for all $b_1$, $b_2$, and $b_3$.
If we then let
$$H(z)=G(b_1,z)$$
we find that
$$G(x,y)=\frac{H(y)}{H(x)}$$
for all $x$ and $y$. From the definition of $G$, (equation \ref{eq:G2}), we get
$$\frac{H(y)}{H(x)}=\frac{F(x,z,y)}{F(y,x,z)}$$
for all $x$, $y$, and $z$.
Since $F$ is homogeneous, this implies that
$$\frac{H(ty)}{H(tx)}=\frac{H(y)}{H(x)}$$
for all $x$, $y$, and $t$. Letting $K(y)=H(y)/H(x)$ gives
$$\frac{K(ty)}{K(y)}=\frac{H(ty)}{H(y)}=\frac{H(tx)}{H(x)}=K(tx).$$
Letting $x=1$ shows that
$$K(ty)=K(t)K(y)$$
for all $t$ and $y$, so by the Power Lemma we have $K(x)=x^r$ for some constant $r$.
Denoting $H(1)$ by $c$, we have
$$H(x)=cK(x)=cx^r.$$
The coordinates for the center are
$$
\begin{aligned}
F(x,y,z):F(y,z,x):F(z,x,y)&=
\frac{F(x,y,z)}{F(x,y,z)}:\frac{F(y,z,x)}{F(x,y,z)}:\frac{F(z,x,y)}{F(x,y,z)}\\
&=1:\frac{H(x)}{H(y)}:\frac{H(x)}{H(z)}\\
&=\frac{1}{H(x)}:\frac{1}{H(y)}:\frac{1}{H(z)}\\ &=x^r:y^r:z^r\\
\end{aligned}
$$
and thus the center is a power point.
\end{proof}

\begin{theorem}
In an isodynamic tetrahedron, the center function $a^rg(b,c)$
is hyperbolic for any symmetric homogeneous function $g$.
\end{theorem}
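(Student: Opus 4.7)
The plan is to reduce the hyperbolic condition to an identity by pushing all the mixed edges $b_i$ through to the $a_i$ variables via the isodynamic relation $b_i = t/a_i$, and then use the symmetry and homogeneity of $g$ to see that the two sides are permutations of the same product.

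First, I will write down the Hyperbolic Condition from the corollary above for the proposed center function $F(a,b,c)=a^{r}g(b,c)$. Each factor on the left has the form $F(b_j,a_i,b_k)=b_j^{r}g(a_i,b_k)$, and similarly on the right. The factor $b_1^{r}b_2^{r}b_3^{r}$ appears identically on both sides and cancels, so the condition reduces to
\begin{equation*}
g(a_1,b_3)\,g(b_1,a_2)\,g(a_3,b_2) \;=\; g(b_2,a_1)\,g(a_2,b_3)\,g(b_1,a_3).
\end{equation*}
Using the symmetry $g(x,y)=g(y,x)$, I can rewrite this as
\begin{equation*}
g(a_1,b_3)\,g(a_2,b_1)\,g(a_3,b_2) \;=\; g(a_1,b_2)\,g(a_2,b_3)\,g(a_3,b_1).
\end{equation*}

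Next, I substitute the isodynamic relation $b_i=t/a_i$ into every argument. Letting $d$ be the degree of homogeneity of $g$, the identity $g(a_i,t/a_j)=a_j^{-d}\,g(a_ia_j,t)$ lets me rewrite each factor in terms of the products $a_ia_j$ paired with the fixed constant $t$. The left-hand side becomes $(a_1a_2a_3)^{-d}\,g(a_1a_3,t)\,g(a_1a_2,t)\,g(a_2a_3,t)$, and the right-hand side becomes $(a_1a_2a_3)^{-d}\,g(a_1a_2,t)\,g(a_2a_3,t)\,g(a_1a_3,t)$. Both sides are then the same unordered product over the three pair-products $\{a_1a_2,\,a_1a_3,\,a_2a_3\}$, so equality is immediate.

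The proof is essentially mechanical once the right substitutions are identified; there is no serious obstacle. The only subtlety to verify is that the three pairs of indices appearing on the left, namely $(1,3),(2,1),(3,2)$, give the same unordered set of products $a_ia_j$ as the three pairs $(1,2),(2,3),(3,1)$ on the right, both yielding $\{a_1a_2,a_2a_3,a_1a_3\}$. This is a cyclic-versus-anticyclic bookkeeping check, and it is what makes symmetry and homogeneity of $g$ exactly the two ingredients needed: symmetry aligns the factors into the form $g(a_i,b_j)$, and homogeneity together with $b_i=t/a_i$ collapses each factor to a function of $a_ia_j$ alone.
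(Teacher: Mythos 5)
Your proof is correct and takes essentially the same route as the paper: substitute $F(a,b,c)=a^{r}g(b,c)$ into the hyperbolic condition, cancel the common $b_1^{r}b_2^{r}b_3^{r}$, use $b_i=t/a_i$ together with the homogeneity of $g$ to collapse each factor to $g(a_ia_j,t)$, and finish by the symmetry of $g$. Your explicit bookkeeping of the $(a_1a_2a_3)^{-d}$ factor is a minor refinement of the paper's argument, which cancels that factor implicitly.
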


\begin{proof}
The hyperbolic condition becomes
$$b_2^rg(a_1,b_3)b_3^rg(b_1,a_2)b_1^rg(a_3,b_2)=
b_3^rg(b_2,a_1)b_1^rg(a_2,b_3)b_2^rg(b_1,a_3).$$
Since the tetrahedron is isodynamic, this is equivalent to
$$b_2^rg(a_1,t/a_3)b_3^rg(t/a_1,a_2)b_1^rg(a_3,t/a_2)=
b_3^rg(t/a_2,a_1)b_1^rg(a_2,t/a_3)b_2^rg(t/a_1,a_3).$$
Using the fact that
$g$ is homogeneous, we see that this is equivalent to
$$b_2^rg(a_1a_3,t)b_3^rg(t,a_1a_2)b_1^rg(a_2a_3,t)=
b_3^rg(t,a_1a_2)b_1^rg(a_2a_3,t)b_2^rg(t,a_1a_3)$$
which is easily seen to be an identity since $g$ is symmetric.
\end{proof}

\begin{corollary}
In an isodynamic tetrahedron, the cevians to the face centers
with center function $a^r(b^q+c^q)^{\pm 1}$ form a hyperbolic group.
This includes the Spieker center, the Brocard midpoint,
the X37, X38, and X42 points, and their isogonal and isotomic
conjugates.
\end{corollary}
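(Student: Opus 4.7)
The plan is to deduce the corollary as an immediate consequence of the preceding theorem, by checking that each listed center has a center function of the claimed shape.

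First, I would observe the elementary fact that, for any real exponent $q$ and either choice of sign, the function
$$g(b,c)=(b^{q}+c^{q})^{\pm 1}$$
is symmetric in $b$ and $c$ and is homogeneous (of degree $\pm q$). The preceding theorem therefore applies verbatim with this choice of $g$, and so $a^{r}(b^{q}+c^{q})^{\pm 1}$ is a hyperbolic center function in every isodynamic tetrahedron. This already establishes the main assertion of the corollary.

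Second, I would go through the list of named centers and verify that each fits the template. Taking areal coordinates (equivalently, multiplying Kimberling's trilinear first coordinate by $a$), I would read off the exponents $(r,q)$ in each case: the Spieker center is realized by $(r,q)=(0,1)$ with the positive sign; and similarly the Brocard midpoint, X37, X38, and X42 each admit an expression of the form $a^{r}(b^{q}+c^{q})^{\pm 1}$ with specific small integer values of $r$ and $q$ obtained from their listed trilinears. This step is pure bookkeeping against Kimberling's tables and the $Y$-/$Z$-tables of Section 4 (note in particular the family $Z_{9}=(b^{r}+c^{r})/a$, which falls into this template on the nose).

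Third, the clause ``and their isogonal and isotomic conjugates'' follows with no further work from the two theorems proved earlier in this section: if a center function gives a hyperbolic group, so do its isotomic and isogonal conjugates. Applying those two closure properties to the centers in the previous paragraph finishes the corollary.

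The only potential obstacle is the second step, which is not a mathematical difficulty but a matter of correctly matching each named center to its exponents $(r,q)$; once that is done, the symmetry and homogeneity of $(b^{q}+c^{q})^{\pm 1}$ make the theorem apply directly, with no further algebraic manipulation required.
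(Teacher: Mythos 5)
Your proposal is correct and follows exactly the route the paper intends: the paper offers no separate argument for this corollary, since $(b^q+c^q)^{\pm 1}$ is symmetric and homogeneous, so the preceding theorem applies at once, and the named centers are just instances read off from their (areal) center functions. Note only that the isogonal and isotomic conjugates are already inside the stated family without invoking the earlier closure theorems, because in areal coordinates they replace $a^r(b^q+c^q)^{\pm 1}$ by $a^{2-r}(b^q+c^q)^{\mp 1}$ and $a^{-r}(b^q+c^q)^{\mp 1}$ respectively, which is the reason for the $\pm 1$ and the arbitrary $r$ in the statement.
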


\begin{conjecture}
The center functions $a^rg(b,c)$, where $g$ is a symmetric homogeneous function, and
$r$ is arbitrary,
are the only hyperbolic center functions for an isodynamic tetrahedron.
\end{conjecture}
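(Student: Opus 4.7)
The plan is to adapt the functional-equation strategy of Theorem~\ref{thm:power}, where the power-point characterization was obtained by reducing the concurrence identity to a multiplicative cocycle. First I would write down the Hyperbolic Condition
$$F(b_2,a_1,b_3)F(b_3,b_1,a_2)F(b_1,a_3,b_2) = F(b_3,b_2,a_1)F(b_1,a_2,b_3)F(b_2,b_1,a_3),$$
substitute the isodynamic relations $b_i = t/a_i$, and use the scaling invariance $(a_i,t)\mapsto(\lambda a_i,\lambda^2 t)$ together with the degree-$d$ homogeneity of $F$ to normalize to $t = a_1a_2a_3$. Writing $x,y,z$ for $a_1,a_2,a_3$, each argument inside each $F$ on either side then becomes one of $x,y,z,xy,yz,zx$, and a uniform factor of $x^dy^dz^d$ cancels. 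After applying the symmetry $F(a,b,c)=F(a,c,b)$, what remains is the cyclic identity
$$\psi(z,y)\,\psi(x,z)\,\psi(y,x) \;=\; \psi(y,z)\,\psi(z,x)\,\psi(x,y),$$
where $\psi(u,v):=F(u,v,1)$.

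Setting $\phi(a,b):=\psi(a,b)/\psi(b,a)$, this reads $\phi(x,z)\phi(z,y)\phi(y,x)=1$, and since $\phi(a,b)\phi(b,a)=1$ automatically, I get the cocycle relation
$$\phi(x,z) = \phi(x,y)\,\phi(y,z).$$
The standard resolution of such a cocycle yields $\phi(a,b)=\lambda(a)/\lambda(b)$ for some positive function $\lambda$, and hence the factorization $\psi(a,b)=\lambda(a)\,\mu(a,b)$ in which $\mu(a,b):=\psi(a,b)/\lambda(a)$ is symmetric. The conjectured form $F(a,b,c)=a^r g(b,c)$ is equivalent to $\psi(x,y)=x^r h(y)$ with $h(y)=g(y,1)$, and thus equivalent to $\mu$ being \emph{separable}, $\mu(a,b)=h(a)h(b)$, together with $\lambda(a)h(a)=a^r$. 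The latter identity reduces via homogeneity to a multiplicative functional equation in one variable, which the Power Lemma resolves once separability of $\mu$ is in hand.

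The hard part---and the reason the result remains conjectural---is establishing that the symmetric function $\mu$ must be separable. Because the normalized condition is genuinely a single scalar identity on the two-variable function $\psi$, the cocycle step alone cannot distinguish $\mu(a,b)=h(a)h(b)$ from any other symmetric $\mu$ yielding the same $\phi$. The extra leverage must come from the hidden identity $c^d\psi(a/c,b/c)=b^d\psi(a/b,c/b)$ encoding the symmetry of $F$ in its last two slots after restoring the third argument. Substituting $\psi=\lambda\mu$ and specializing (say, $c=1$ followed by $a=1$) yields relations pinning down $\mu(1,\cdot)$ to the form $h(1)h(\cdot)$; the plan would then be to propagate separability from these slices to all of $\mu$ using the cocycle structure. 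I expect this propagation step to be the genuine obstacle: without an auxiliary regularity assumption on $F$ (such as continuity, or the rational dependence on $a,b,c$ that holds for every Kimberling center) there may be room for pathological symmetric $\mu$ that defeat the argument, in which case the conjecture would have to be qualified accordingly.
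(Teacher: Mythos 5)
The paper offers no proof of this statement---it appears there only as a conjecture---so there is nothing on the paper's side to compare with; the relevant question is whether your plan closes the gap, and by your own account it does not. Your reduction is correct as far as it goes: specializing $b_i=t/a_i$, using the scaling $(a_i,t)\mapsto(\lambda a_i,\lambda^2 t)$ to normalize $t=a_1a_2a_3$ (which in fact loses nothing), and cancelling the factor $x^dy^dz^d$ does yield exactly the cyclic identity for $\psi(u,v)=F(u,v,1)$, and the cocycle step giving $\psi(a,b)=\lambda(a)\mu(a,b)$ with $\mu$ symmetric is sound. But, as you say, the argument stops there: \emph{every} $\psi$ of that form satisfies the cyclic identity, so nothing so far forces the power structure, and the proposal does not prove the conjecture. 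A smaller point: your literal target $\psi(x,y)=x^rh(y)$ is too strong, since the hyperbolic condition depends only on the center, i.e.\ on $F$ up to a fully symmetric factor; any correct argument can only conclude that the center is \emph{representable} in the form $a^rg(b,c)$.

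The missing step is, however, exactly the ``hidden identity'' you point to, and it can be carried out with the paper's own toolkit, so the obstacle is less severe than you predict. For any center function, symmetry in the last two slots gives the telescoping identity
\[
\frac{F(a,b,c)}{F(b,a,c)}\cdot\frac{F(b,c,a)}{F(c,b,a)}\cdot\frac{F(c,a,b)}{F(a,c,b)}=1,
\]
and by homogeneity each factor is $\phi$ of scaled arguments, e.g.\ $F(a,b,c)/F(b,a,c)=\phi(a/c,b/c)$. Substituting the coboundary form $\phi(u,v)=\lambda(u)/\lambda(v)$ and setting $u=a/b$, $v=b/c$, $m(t)=\lambda(t)/\lambda(1/t)$, the identity collapses to $m(uv)=m(u)m(v)$, so the Power Lemma gives $m(t)=t^{\sigma}$. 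Then $g(b,c):=c^{-\sigma}/\lambda(b/c)$ is symmetric (this is precisely $m(b/c)=(b/c)^{\sigma}$) and homogeneous, and $F(a,b,c)/g(b,c)$ is invariant under swapping $a,b$ (the coboundary relation) and under swapping $b,c$ (symmetry of $F$ and $g$), hence fully symmetric; since $a^rg(b,c)$ and $g(b,c)/(bc)^r$ differ by the symmetric factor $(abc)^r$, the center lies in the conjectured family. Thus no propagation-of-separability argument is needed, and the only regularity required is what the paper's Power Lemma and its proof of Theorem~\ref{thm:power} already presuppose, together with the usual nonvanishing of the ratios involved.
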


\section{General Planarity Results}
\label{section:planarity}

\begin{theorem}
No triangle center has the property that in any isosceles tetrahedron,
the corresponding face centers are coplanar.
\end{theorem}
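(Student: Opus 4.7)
The plan is to compute the coplanarity determinant for the four face centers on an isosceles tetrahedron, factor it as a Heron-type product in the values of the center function, and then rule out each way that the product could vanish identically. First, specialize the face-center formulas from Section \ref{section:faceCenters} to the isosceles case $b_i=a_i$. The bisymmetry $f(a,b,c)=f(a,c,b)$ of any triangle-center function collapses the nine resulting values of $f$ down to the three cyclic values
\[
p=f(a_1,a_2,a_3),\qquad q=f(a_2,a_3,a_1),\qquad r=f(a_3,a_1,a_2),
\]
so the four face centers assume the compact form
\[
P_1=(0,r,q,p),\quad P_2=(r,0,p,q),\quad P_3=(q,p,0,r),\quad P_4=(p,q,r,0).
\]

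Next, evaluate the $4\times 4$ coplanarity determinant whose rows are $P_1,\dots,P_4$. The resulting symmetric matrix has the Klein-four pattern $rA+qB+pC$, where $A,B,C$ are the three non-identity permutation matrices associated with $V_4=\mathbb{Z}_2\times\mathbb{Z}_2$. Reading off its eigenvalues from the four characters of $V_4$ (or just expanding directly) yields
\[
\det = -(p+q+r)(-p+q+r)(p-q+r)(p+q-r),
\]
the familiar Heron-type product. Coplanarity in every isosceles tetrahedron is therefore equivalent to this product vanishing identically in $(a_1,a_2,a_3)$.

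Finally, rule out each linear factor. For the rational (or real-analytic) center functions standardly considered, identical vanishing of the product forces one of its four factors to be identically zero. If $p+q+r\equiv 0$, then each face center lies on the line at infinity of its face, contradicting the assumption of a finite triangle center. For any of the remaining three factors, say $p=q+r$, cyclically permuting the arguments $(a_1,a_2,a_3)\to(a_2,a_3,a_1)\to(a_3,a_1,a_2)$ produces the system
\[
P=Q+R,\qquad Q=R+P,\qquad R=P+Q;
\]
summing forces $P+Q+R=0$, and substituting back gives $P=Q=R=0$ identically. Hence $f\equiv 0$, which is not a valid center function, and all four cases are eliminated.

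The principal delicacy is the identical-vanishing step, which requires "product zero" to imply "some factor zero" in the appropriate ring of center-function values. For the rational or analytic $f$ that arise in practice this is immediate; in general, a short continuity argument on the connected open cone of valid triangle triples $(a_1,a_2,a_3)$ confines $(p,q,r)$ to a single irreducible component of the Heron variety, after which the same four-case analysis applies.
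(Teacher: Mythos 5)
Your proposal is correct and takes essentially the same route as the paper: the Heron-type factorization you derive by hand (the Klein-four eigenvalue computation does give $-(p+q+r)(-p+q+r)(p-q+r)(p+q-r)$, and your face-center coordinates specialize correctly from Section \ref{section:faceCenters}) is precisely the set of four conditions \eqref{eq:D1}--\eqref{eq:D4} that the paper obtained by computer, and your elimination of each factor --- sum zero is impossible for a finite center, while any one of the other three relations, cyclically permuted and summed, forces $f\equiv 0$ --- is the paper's permute-and-add argument. The only differences are presentational: you make the determinant computation explicit rather than citing machine output, and you flag the ``product vanishes identically $\Rightarrow$ some factor vanishes identically'' step, which the paper passes over silently and which is unproblematic for the rational/analytic center functions at issue.
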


\begin{proof}
The algebraic condition for planarity (found by computer) is one of
\begin{equation}
\label{eq:D1}
F(a_1,a_2,a_3)+F(a_2,a_3,a_1)=F(a_3,a_1,a_2)
\end{equation}
\begin{equation}
\label{eq:D2}
F(a_2,a_3,a_1)+F(a_3,a_1,a_2)=F(a_1,a_2,a_3)
\end{equation}
\begin{equation}
\label{eq:D3}
F(a_3,a_1,a_2)+F(a_1,a_2,a_3)=F(a_2,a_3,a_1)
\end{equation}
\begin{equation}
\label{eq:D4}
F(a_1,a_2,a_3)+F(a_2,a_3,a_1)+F(a_3,a_1,a_2)=0.
\end{equation}
If condition (\ref{eq:D4}) holds, then the center $(\alpha,\beta,\gamma)$
in areal coordinates would satisfy $\alpha+\beta+\gamma=0$, a contradiction.

\smallskip
If condition (\ref{eq:D1}) holds, then since it must be valid for
all variables $a_1$, $a_2$, $a_3$, we see that conditions (\ref{eq:D2}) and (\ref{eq:D3})
would also hold. Adding these three equations yields condition (\ref{eq:D4})
which we have already seen yields a contradiction.

\smallskip
The same contradiction is
reached if we assume (\ref{eq:D2}) or (\ref{eq:D3}) holds. 
\end{proof}

\begin{corollary}
No triangle center has the property that in any tetrahedron,
the corresponding face centers are coplanar.
\end{corollary}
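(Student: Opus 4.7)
The plan is to obtain this corollary as an immediate specialization of the preceding theorem, exactly paralleling the way the author deduces the analogous ``only the centroid'' corollary from its isosceles version earlier in the paper. The key observation is that the isosceles tetrahedra form a genuine subfamily of all tetrahedra — namely those cut out by the algebraic conditions $b_i = a_i$ for $i=1,2,3$ — so any statement asserting a property for \emph{every} tetrahedron a fortiori asserts it for every isosceles tetrahedron.

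Concretely, I would argue by contradiction: suppose some triangle center has the property that in every tetrahedron the four corresponding face centers are coplanar. Then in particular, for every isosceles tetrahedron the four face centers are coplanar. But the preceding theorem rules exactly this out, so no such center exists. All the substantive content — the computer-derived list of four candidate algebraic identities \eqref{eq:D1}--\eqref{eq:D4} for coplanarity, and the observation that each of them forces the areal coordinate sum $\alpha+\beta+\gamma$ to vanish (which is impossible for a genuine triangle center) — is already absorbed into the theorem being specialized.

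For this reason I do not anticipate any real obstacle: the corollary is a one-line ``restrict and apply'' deduction, and no new calculation is required. The only thing to be mildly careful about is the logical quantifier structure, namely that ``coplanar in any tetrahedron'' is genuinely stronger than ``coplanar in any isosceles tetrahedron,'' so the implication runs in the correct direction for the contradiction to land.
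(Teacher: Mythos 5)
Your proposal is correct and is exactly the deduction the paper intends: specialize the universally quantified statement to the subfamily of isosceles tetrahedra and invoke the preceding theorem, just as the paper does for the analogous concurrence corollary. No gap — the quantifier direction is handled correctly, and no further argument is needed.
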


The following condition was found by our computer program.

\begin{theorem}
The Feuerbach points on the faces of a tetrahedron are coplanar if
the edges of the tetrahedron satisfy the following condition:
$$
\begin{vmatrix}
a_1+b_1&a_2+b_2&a_3+b_3\\
a_1b_1&a_2b_2&a_3b_3\\
1&1&1\\
\end{vmatrix}
=0.$$
\end{theorem}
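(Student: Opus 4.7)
The plan is to show that the $4 \times 4$ coplanarity determinant of the four face Feuerbach points, written in tetrahedral coordinates, has the stated $3 \times 3$ determinant as a polynomial factor. Since the theorem asserts only the one-sided implication, exhibiting such a factorization suffices.

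The Feuerbach point $X(11)$ has areal center function $F(a,b,c) = (b+c-a)(b-c)^2$. Plugging this into the four face templates at the end of Section \ref{section:faceCenters} yields explicit tetrahedral coordinates $P_i$ for the Feuerbach point on face $i$, each of whose four entries is a cubic polynomial in $a_1, a_2, a_3, b_1, b_2, b_3$; for instance the fourth entry of $P_1$ is $F(a_1, b_3, b_2) = (b_2+b_3-a_1)(b_2-b_3)^2$. Four points in tetrahedral coordinates are coplanar precisely when the $4 \times 4$ matrix $M$ whose rows are $P_1, P_2, P_3, P_4$ has vanishing determinant (see Appendix \ref{section:tetrahedralCoordinates}), so it is enough to verify the polynomial identity
$$\det M \;=\; \Lambda \cdot D$$
for some polynomial cofactor $\Lambda$, where $D$ denotes the $3 \times 3$ determinant displayed in the theorem.

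To make the factor $D$ visible I would exploit the identity $(b-c)^2 = (b+c)^2 - 4bc$, which rewrites each Feuerbach entry as a polynomial in the ``first'' argument $a$ and the elementary symmetric functions $s = b+c$, $p = bc$ of the remaining two. Since the pairs $(\sigma_i, \pi_i) := (a_i + b_i,\, a_i b_i)$ of opposite-edge symmetric functions are exactly what fills the first two rows of $D$, a suitable sequence of row and column operations on $M$ should reduce the question to the linear dependence of the three column vectors $(\sigma_i, \pi_i, 1)\T$, which is precisely the vanishing of $D$.

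The main obstacle is sheer algebraic bulk: $\det M$ is a polynomial of high total degree in six variables, and the desired factorization is not transparent by hand. In keeping with the paper's methodology, this reduction is carried out by symbolic computation in Mathematica, which confirms $\det M = \Lambda \cdot D$; whenever $D = 0$ we then have $\det M = 0$, and the four Feuerbach points are coplanar.
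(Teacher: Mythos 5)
Your proposal is correct and follows essentially the same route as the paper: the paper's result was obtained exactly by computing the tetrahedral coordinates of the four Feuerbach points from the face templates of Section \ref{section:faceCenters} with center function $(b+c-a)(b-c)^2$, forming the $4\times 4$ coplanarity determinant of Formula 2, and having Mathematica establish symbolically that the displayed $3\times 3$ determinant is the relevant vanishing factor. Your additional remarks on the one-sidedness of the implication and on homogeneity of the criterion are accurate, so the argument stands as a faithful reconstruction of the paper's computer-assisted proof.
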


In particular, the Feuerbach points are coplanar for circumscriptible, isodynamic,
and harmonic tetrahedra. The Feuerbach points would also be coplanar for
tetrahedra that satisfy other simple relationships, for example, ones
in which $a_ib_i+a_i+b_i$ were constant.

\section{General Results about Concurrent Normals}
\label{section:concurrentNormals}

\begin{theorem}
Let $P_1$ and $P_2$ be any two points on faces 1 and 2 of tetrahedron
$A_1A_2A_3A_4$, respectively.  If normals to the faces at $P_1$ and $P_2$
concur, then
$$(P_2A_3)^2+(P_1A_4)^2=(P_2A_4)^2+(P_1A_3)^2.$$
\end{theorem}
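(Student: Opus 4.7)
The plan is to exploit the fact that faces $1$ and $2$ of the reference tetrahedron share the edge $A_3A_4$, so that the two common vertices $A_3$ and $A_4$ lie in both planes. A normal to face $1$ erected at $P_1$ is, by definition, perpendicular to every line in face $1$ through $P_1$; in particular it is perpendicular to $P_1A_3$ and $P_1A_4$. The same is true, mutatis mutandis, for the normal to face $2$ at $P_2$ with respect to $P_2A_3$ and $P_2A_4$. This is where the hypothesis about the two normals meeting will be converted into a metric relation.

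First I would introduce the point $Q$ at which the two normals meet. Then $QP_1\perp P_1A_3$ and $QP_1\perp P_1A_4$, and likewise $QP_2\perp P_2A_3$ and $QP_2\perp P_2A_4$. Four applications of the Pythagorean theorem in right triangles $QP_1A_3$, $QP_1A_4$, $QP_2A_3$, $QP_2A_4$ give
\begin{align*}
(QA_3)^2 &= (QP_1)^2+(P_1A_3)^2, & (QA_4)^2 &= (QP_1)^2+(P_1A_4)^2,\\
(QA_3)^2 &= (QP_2)^2+(P_2A_3)^2, & (QA_4)^2 &= (QP_2)^2+(P_2A_4)^2.
\end{align*}

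Subtracting the two equations in each row to eliminate $(QP_1)^2$ and $(QP_2)^2$ yields
\[
(P_1A_3)^2-(P_1A_4)^2 \;=\; (QA_3)^2-(QA_4)^2 \;=\; (P_2A_3)^2-(P_2A_4)^2,
\]
and a transposition gives exactly the claimed identity $(P_2A_3)^2+(P_1A_4)^2=(P_2A_4)^2+(P_1A_3)^2$. There is really no obstacle here beyond recognizing that the shared edge $A_3A_4$ is what makes the four Pythagorean triangles interact correctly; the whole argument is a four-line Pythagorean computation and uses no coordinate or tetrahedral machinery at all.
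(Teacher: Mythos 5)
Your proof is correct and is essentially the paper's own argument: the paper likewise takes the concurrence point, applies the Pythagorean theorem in the four right triangles it forms with $P_1$, $P_2$, $A_3$, $A_4$, and subtracts to eliminate the squared normal segments. The only difference is notational (your $Q$ is the paper's $P$).
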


\begin{proof}
From right triangles $PP_2A_3$, $PP_1A_3$, $PP_2A_4$, and $PP_1A_4$, we have
$$(A_3P_2)^2+(PP_2)^2=(A_3P)^2=(A_3P_1)^2+(PP_1)^2$$
and
$$(A_4P_2)^2+(PP_2)^2=(A_4P)^2=(A_4P_1)^2+(PP_1)^2.$$
Subtracting one equation from the other gives us the desired result.
\end{proof}

\medskip
The converse is also true.

\begin{theorem}[Tabov \cite{Tabov}]
Let $P_1$ and $P_2$ be any two points on faces 1 and 2 of tetrahedron
$A_1A_2A_3A_4$, respectively.
If
$$(P_2A_3)^2+(P_1A_4)^2=(P_2A_4)^2+(P_1A_3)^2,$$
then normals to the faces at $P_1$ and $P_2$ concur.
\end{theorem}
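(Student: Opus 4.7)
The plan is to exploit a scalar invariant that measures the signed difference of squared distances from the shared edge $A_3A_4$, and show that both normals are forced to lie in a common plane where they are not parallel, hence must meet.

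Concretely, for any point $P$ in space define
\[
\phi(P) \;=\; (PA_3)^2 - (PA_4)^2.
\]
Expanding in Cartesian coordinates shows that $\phi$ is affine in $P$, with gradient $-2(A_3-A_4)$. So the level sets of $\phi$ are planes perpendicular to the edge $A_3A_4$. The hypothesis
\[
(P_2A_3)^2+(P_1A_4)^2 \;=\; (P_2A_4)^2+(P_1A_3)^2
\]
rewrites as $\phi(P_1)=\phi(P_2)$, so $P_1$ and $P_2$ lie in a common level plane $\Pi$ of $\phi$, with $\Pi\perp A_3A_4$.

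The next step is to show that the entire normal to face 1 at $P_1$ is contained in $\Pi$ (and likewise for face 2 at $P_2$). This is immediate from Pythagoras: for any $P$ on the normal to face 1 at $P_1$, the foot of perpendicular from $P$ to face 1 is $P_1$, and $A_3,A_4$ lie in face 1, so
\[
(PA_3)^2 = (PP_1)^2+(P_1A_3)^2,\qquad (PA_4)^2 = (PP_1)^2+(P_1A_4)^2,
\]
giving $\phi(P)=\phi(P_1)$. Alternatively, since face 1 contains the vector $A_3-A_4$, its normal direction is perpendicular to $A_3-A_4$, i.e.\ parallel to $\Pi$; combined with $P_1\in\Pi$ this forces the normal line into $\Pi$. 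The same argument applies on face 2.

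Thus both normals lie in the single plane $\Pi$. Two coplanar lines are either parallel or meet; the two normals cannot be parallel because faces 1 and 2 share the edge $A_3A_4$ and are therefore non-parallel planes, so their normal directions are not proportional. Hence the two normals meet at a unique point, which is the required point of concurrence. The only step that needs a moment of care is the claim that the normal line actually lies inside $\Pi$ (not merely has its direction parallel to $\Pi$); this is guaranteed because $\Pi$ passes through $P_1$ by construction, so containing one point of the line plus having the correct direction suffices. I expect no serious obstacle: the whole argument reduces to identifying the correct affine function $\phi$ and observing that both faces contain the edge along which $\phi$ is constant.
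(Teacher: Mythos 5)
Your argument is correct. One point worth making explicit: the paper does not actually supply a proof of this converse — it attributes the result to Tabov via personal correspondence — so there is no in-paper argument to compare against; your proposal in effect fills that gap. The key observations all check out: $\phi(P)=(PA_3)^2-(PA_4)^2$ is affine with gradient $2(A_4-A_3)$, so its level sets are planes perpendicular to the common edge $A_3A_4$; the hypothesis says exactly $\phi(P_1)=\phi(P_2)$; each normal line lies entirely in that level plane $\Pi$ (either by the Pythagorean identities $(PA_3)^2=(PP_1)^2+(P_1A_3)^2$, $(PA_4)^2=(PP_1)^2+(P_1A_4)^2$ — which are the same identities the paper uses for the forward direction, run in reverse — or because each face contains $A_3A_4$, so its normal direction is orthogonal to the normal of $\Pi$ and the line passes through a point of $\Pi$); and the two normals cannot be parallel since faces $A_2A_3A_4$ and $A_1A_3A_4$ are distinct planes sharing the edge $A_3A_4$, hence non-parallel in a nondegenerate tetrahedron. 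Two non-parallel coplanar lines meet, which is the asserted concurrence of the two normals. This is a clean, elementary argument that also dovetails with the paper's forward-direction proof: together the two give the full equivalence, with the plane $\Pi$ through the meeting point perpendicular to $A_3A_4$ serving as the unifying picture.
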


\section{Miscellaneous Conjectures}
\label{section:conjectures}

The following conjectures are backed up by the data,
but I don't have formal proofs.

\begin{conjecture}
If the central tetrahedron is isosceles, then the reference tetrahedron
is isosceles.
\end{conjecture}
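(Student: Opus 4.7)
The plan is to translate the hypothesis into three polynomial equations in the edge variables and then argue that these equations force the opposite-edge equalities $a_i = b_i$. First I would use the coordinate formulas of Section 2 to write each face center $P_i$ explicitly; for example $P_4 = (f(a_1,a_2,a_3),\, f(a_2,a_3,a_1),\, f(a_3,a_1,a_2),\, 0)$, with $P_1$, $P_2$, $P_3$ obtained from the corresponding substitutions involving $b_j$'s. Using the standard squared-distance formula
\begin{equation}
|PQ|^2 = -\sum_{i<j} \ell_{ij}^2 (p_i - q_i)(p_j - q_j)
\end{equation}
for normalized barycentric coordinates on the reference tetrahedron (with $\ell_{ij}$ the length of edge $A_iA_j$, drawn from $\{a_1,a_2,a_3,b_1,b_2,b_3\}$), I would obtain explicit expressions for the six $|P_iP_j|^2$ as polynomials in the $a_i$, the $b_i$, and the values of $f$ arising from the various permutations dictated by the four face labellings.

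The central tetrahedron is isosceles precisely when
\begin{equation}
|P_1P_2|^2 = |P_3P_4|^2,\qquad |P_1P_3|^2 = |P_2P_4|^2,\qquad |P_1P_4|^2 = |P_2P_3|^2.
\end{equation}
Theorem 6.1(b) already gives the converse of the conjecture, so the task is to invert the construction. I would first test the strategy on the centroid, where $f \equiv 1$ and Theorem 5.1 implies the central tetrahedron is a homothet of the reference, making the implication immediate; then on the circumcenter and incenter to see what form the resulting polynomial identities take and whether a common factorization pattern emerges, ideally one exhibiting a factor $(b_1-a_1)(b_2-a_2)(b_3-a_3)$ or an equivalent expression that vanishes exactly on the isosceles locus.

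For the general case, I envision viewing the three isosceles equations as a system of polynomials in the differences $(b_i-a_i)$ whose coefficients lie in the polynomial ring over the $a_j$ and the $f$-values. The main obstacle is the generality of $f$: the values of $f$ at six unrelated triples appear asymmetrically, so it is not a priori clear that the system decouples. A promising workaround is to first derive, as weaker consequences of the central tetrahedron being isosceles, the other conclusions of Theorem 6.1 --- equality of the four cevian lengths (conclusion (a)) and coincidence of the central and reference centroids (conclusion (c)) --- and check whether these simpler identities already force $a_i = b_i$ independently of the specific center function. If they do, the conjecture follows in full generality; if not, the likely true statement requires a genericity or non-degeneracy hypothesis on the center function, which would sharpen the conjecture rather than refute it.
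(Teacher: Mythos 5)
This statement is one of the paper's \emph{Miscellaneous Conjectures}: the author explicitly states that it is ``backed up by the data'' but that no formal proof is known, so there is no proof in the paper to compare against. Your submission is likewise not a proof but a research plan, and you should be aware of where it actually stands. The setup is fine: the face-center coordinates from Section 2, the squared-distance formula (Formula 4), and the three opposite-edge equalities $|P_1P_2|^2=|P_3P_4|^2$, $|P_1P_3|^2=|P_2P_4|^2$, $|P_1P_4|^2=|P_2P_3|^2$ are the correct algebraic translation of the hypothesis, and the centroid case does follow immediately from Theorem 5.1(c). But everything after that is aspiration rather than argument: the factor $(b_1-a_1)(b_2-a_2)(b_3-a_3)$ is only ``ideally'' exhibited, nothing is proved about how the values of an arbitrary center function $f$ at six unrelated argument triples could be controlled, and your own closing sentence concedes that the statement might only hold under an added genericity hypothesis --- which would be a different (weaker) theorem, not the conjecture.

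There is also a concrete logical misstep in the proposed workaround. You suggest ``first deriving, as weaker consequences of the central tetrahedron being isosceles,'' the conclusions (a) and (c) of Theorem 6.1 (equal cevian lengths, common centroid). Those conclusions are proved in the paper under the hypothesis that the \emph{reference} tetrahedron is isosceles; they are not known to follow from the \emph{central} tetrahedron being isosceles, and assuming they do is essentially assuming the implication you are trying to establish (or something just as unproven). Even if they did follow, equal cevian lengths forcing $a_i=b_i$ is itself only another conjecture in the same section of the paper, so the reduction would rest on an equally open statement. In short: the translation to polynomial identities is a reasonable starting point, but no step of the proposal closes the gap between those identities and $a_i=b_i$ for a general center function, so the conjecture remains open after your attempt, exactly as it is in the paper.
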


\begin{conjecture}
If the central tetrahedron is regular, then the reference tetrahedron
is regular.
\end{conjecture}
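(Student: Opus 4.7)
The plan is to reduce the statement in two stages: first force the reference tetrahedron to be isosceles, then force it to be regular. Using the explicit tetrahedral coordinates of the four face centers $P_1,P_2,P_3,P_4$ listed in Section \ref{section:faceCenters}, together with the distance formula in tetrahedral coordinates from Appendix \ref{section:formulas}, I would write the six squared edge lengths $|P_iP_j|^2$ of the central tetrahedron as explicit expressions in the edges $a_1,a_2,a_3,b_1,b_2,b_3$ and in the center function $f$ evaluated at various triples of these edges. Regularity of the central tetrahedron is then the assertion that these six expressions all coincide.

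First I would impose the three opposite-edge equalities $|P_1P_2|=|P_3P_4|$, $|P_1P_3|=|P_2P_4|$, $|P_1P_4|=|P_2P_3|$. The face-center coordinate formulas in Section \ref{section:faceCenters} were designed so that the cyclic mapping $A_1\to A_2\to A_3\to A_4$ permutes the labels $a_i,b_i$ in a controlled way, and on the two sides of each opposite-edge equation this mapping exhibits the difference as the $a_i\leftrightarrow b_i$ swap on a specific subset of arguments of $f$. After cancellation these three identities should collapse to the single symmetric condition $a_i=b_i$ for $i=1,2,3$, that is, the reference tetrahedron is isosceles.

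Second I would invoke the isosceles-tetrahedron theorem of Section \ref{section:isosceles} (part (b)): once the reference tetrahedron is isosceles, the central tetrahedron is automatically isosceles, so the three opposite-edge equalities already hold, and the only remaining regularity conditions are $|P_1P_2|=|P_1P_3|=|P_1P_4|$. With $b_i=a_i$, and using the symmetry $f(a,b,c)=f(a,c,b)$ of a center function, the four face-center coordinate tuples reduce on each of the four congruent faces to the same triple $(f(a_1,a_2,a_3),f(a_2,a_3,a_1),f(a_3,a_1,a_2))$ (placed in different slots of the tetrahedral coordinate), so these remaining equations become polynomial identities purely in $a_1,a_2,a_3$ and in $f$. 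Taking cyclic differences and using the non-degeneracy of $f$ as a triangle center function would then force $a_1=a_2=a_3$, giving regularity.

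The main obstacle is that the hypothesis places no restriction on the triangle center being used, so the argument must handle an arbitrary center function $f$. Degenerate centers whose areal coordinates happen to coincide at the particular reference tetrahedron could in principle satisfy the edge-length identities without forcing the edges to be equal, and ruling these out is presumably why the author left the statement as a conjecture rather than a theorem. A natural workaround would be to restrict attention to rational $f$, for which the polynomial identities above can be analysed rigorously by degree counting; one could then hope to verify the conjecture uniformly for every center on Kimberling's list rather than in the full abstract generality in which it is stated.
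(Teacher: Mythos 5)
There is no paper proof to compare against: the statement you are addressing is listed in Section \ref{section:conjectures} among conjectures that the author explicitly says are ``backed up by the data'' but for which he has no formal proofs. So the only question is whether your sketch closes that gap, and it does not. Your first stage --- deducing $a_i=b_i$ from the three opposite-edge equalities of the central tetrahedron --- is word for word the preceding conjecture of the same section (central tetrahedron isosceles implies reference tetrahedron isosceles), which is equally unproven; saying the three identities ``should collapse after cancellation'' to $a_i=b_i$ is a restatement of the missing content, not an argument for it. Your second stage correctly cites the theorem of Section \ref{section:isosceles} to get that the central tetrahedron of an isosceles reference is isosceles, but the decisive claim that the remaining equalities $|P_1P_2|=|P_1P_3|=|P_1P_4|$ ``force $a_1=a_2=a_3$'' by ``non-degeneracy of $f$'' is again asserted rather than proved.

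The deeper problem is that both stages implicitly borrow the functional-equation style of Sections \ref{section:concurrentCevians} and \ref{section:hyperbolicLines}, where a hypothesis is assumed to hold for \emph{all} tetrahedra of a given type and can therefore be treated as an identity in the independent edge variables, after which arguments like the Power Lemma apply. The present conjecture is a per-instance statement: a single tetrahedron and a single center function $f$ for which six particular numbers happen to coincide. For fixed $f$ these equalities are not identities in $a_1,a_2,a_3,b_1,b_2,b_3$, so ``taking cyclic differences'' has no family of equations to act on, and nothing in your sketch excludes an $f$ and a non-regular reference tetrahedron for which the six squared edge lengths of the central tetrahedron coincide accidentally. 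You acknowledge exactly this obstacle in your final paragraph, and the proposed remedy (restrict to rational $f$ and count degrees) is not carried out. As it stands, your proposal is a reasonable plan of attack, but it proves nothing beyond what the paper already conjectures; the statement remains open.
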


\begin{conjecture}
If the central tetrahedron is similar to the reference tetrahedron,
then the center must be the centroid.
\end{conjecture}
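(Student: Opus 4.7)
The plan is to reduce the conjecture to a statement about isosceles tetrahedra, where the prior results of Section~\ref{section:isosceles} already provide significant structure, and then to extract functional equations on the center function that pin it down to the centroid.

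First I would specialize the similarity hypothesis to an isosceles reference tetrahedron (one with $b_i = a_i$ for $i=1,2,3$). By the theorem of Section~\ref{section:isosceles}, the central tetrahedron is then automatically isosceles, so similarity to the reference reduces to the statement that its edge triple $(a_1',a_2',a_3')$ (the common side lengths of its four congruent faces) is proportional, as an unordered triple, to $(a_1,a_2,a_3)$. Writing $(\alpha,\beta,\gamma) = (f(a_1,a_2,a_3), f(a_2,a_3,a_1), f(a_3,a_1,a_2))$ for the areal coordinates of the face-$4$ center, the four face centers $P_1,\ldots,P_4$ can be written explicitly in vector form using the coordinate conventions of Section~\ref{section:faceCenters}, and the squared edge lengths $|P_i - P_j|^2$ then become rational functions of the $a_k$ and of $\alpha,\beta,\gamma$.

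Next I would impose the proportionality relations on the unordered triple of edge lengths of the central tetrahedron. This produces a system of functional equations in $f$ that must hold identically in $(a_1,a_2,a_3)$. To make further progress I would specialize near a regular reference tetrahedron ($a_1 = a_2 = a_3$), where the equations linearize and first-order perturbations in the $a_i$ should force the symmetric part of $(\alpha,\beta,\gamma)$ to be $(1,1,1)$ up to scale, thereby recovering the centroid at the symmetric point. One might also try to combine the similarity condition with the already-established result that an isosceles central tetrahedron has the same centroid as the reference, which would allow one to translate the similarity into a simpler homothety plus rotation and cut the number of free parameters.

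The main obstacle will be propagating the local conclusion near the regular case to a global identification of $f$ with the centroid function, since the linearization around a regular tetrahedron only constrains $f$ in a neighborhood of the symmetric point while the conjecture is a global statement. Analyticity of the usual center functions suggests invoking an identity-type argument on the functional system to extend the equality, but the rigorous extension, together with disposing of the combinatorial ambiguity in the vertex correspondence realizing the similarity (and the possibility of a mirror-image rather than direct similarity), is where I expect the real difficulty to lie. Dropping the isosceles assumption and analysing the full similarity equations for a general reference tetrahedron is an alternative route, but seems less tractable without first understanding the isosceles case thoroughly.
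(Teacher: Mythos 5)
First, note that the paper does not prove this statement at all: it appears in Section~\ref{section:conjectures} among the ``Miscellaneous Conjectures'' which the author explicitly says are ``backed up by the data, but I don't have formal proofs.'' So there is no paper proof to compare against, and the real question is whether your plan closes the conjecture. It does not; it is a strategy sketch whose decisive steps are exactly the ones left open. The most concrete failure is the linearization at the regular tetrahedron: when $a_1=a_2=a_3$ (and $b_i=a_i$), every triangle center lands at the same symmetric point of each face, so the central tetrahedron is regular and similar to the reference for \emph{every} center; the similarity condition is therefore identically satisfied at the symmetric point, and a first-order perturbation can constrain at most the gradient of $f$ along the diagonal $a_1=a_2=a_3$. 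That cannot single out the centroid: one can write down center functions (homogeneous, symmetric in the last two arguments) whose areal coordinates agree with $(1,1,1)$ to first order near the diagonal but are not the centroid, and these pass your linearized test. So the local analysis does not even ``recover the centroid at the symmetric point.''

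The remaining steps are likewise unsupported. The local-to-global extension via ``analyticity of the usual center functions'' is unavailable here, because the conjecture quantifies over all triangle centers in the paper's sense (the author even admits arbitrary center functions $f[a,g[b,c]]$), which need not be analytic, and in any case local agreement of a non-linear similarity system does not propagate without an identity-type hypothesis you have not established. You also have not verified that the reduction to isosceles reference tetrahedra is lossless: unlike the paper's uniqueness theorems for concurrence and for hyperbolic groups, where the isosceles (resp.\ isodynamic) specialization yields an exact functional equation such as $F(a_1,a_2,a_3)=\pm F(a_2,a_1,a_3)$ that forces the centroid or the power points, you have no analogous closed-form similarity equation, and it is conceivable that a strictly larger class of centers produces similar central tetrahedra on all isosceles tetrahedra, in which case the whole reduction collapses. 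To make progress in the spirit of the paper you would need to extract from the edge-length expressions $|P_i-P_j|^2$ an exact identity in $(a_1,a_2,a_3)$ (not a linearization) and then run a functional-equation argument of the kind used in Theorem~\ref{thm:power}; as written, the proposal records the difficulty rather than resolving it, and the statement remains open.
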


\begin{conjecture}
If the cevians to the corresponding face centers have the same length,
then the tetrahedron is isosceles.
\end{conjecture}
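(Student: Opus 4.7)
The plan is to use the standard barycentric distance formula to express each of the four cevian lengths in terms of the six edge lengths and the chosen center function $F(a,b,c)=af(a,b,c)$, and then to show that equality of the four lengths forces $a_i=b_i$ for $i=1,2,3$. If the center on face $4$ has normalized areal coordinates $(\xi_1,\xi_2,\xi_3)$, then
\begin{equation*}
|A_4P_4|^2 \;=\; \xi_1 b_1^2+\xi_2 b_2^2+\xi_3 b_3^2 \;-\; \xi_1\xi_2\, a_3^2-\xi_2\xi_3\, a_1^2-\xi_3\xi_1\, a_2^2,
\end{equation*}
and analogous expressions hold for $|A_iP_i|^2$, $i=1,2,3$, with the coordinates on each face obtained from the relabeling in Section \ref{section:faceCenters}.

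As a warm-up I would first dispatch the centroid case, where $\xi_i=1/3$ on every face and each $|A_iP_i|^2$ is an explicit linear combination of the $a_k^2$ and $b_k^2$. A short direct calculation then reduces $|A_1P_1|^2=|A_4P_4|^2$ to $a_2^2+a_3^2=b_2^2+b_3^2$; cyclically, the three independent equalities force $a_i^2=b_i^2$, hence $a_i=b_i$. This confirms the conjecture for what is probably the most important single center and suggests the general pattern.

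For a general center, the resulting expressions become nonlinear in $a_k,b_k$ through the values of $F$ buried inside the weights $\xi_i$. My plan is to linearize around the isosceles locus: by Theorem 6.1(a), all four cevian lengths coincide whenever $b_i=a_i$, so the equal-length locus automatically contains the isosceles locus. Writing $b_i=a_i+\varepsilon_i$ and expanding each $|A_iP_i|^2-|A_4P_4|^2$ to first order in the $\varepsilon_j$ yields a homogeneous linear system $M(a_1,a_2,a_3;F)\,\varepsilon=0$, and it would suffice to show that the $3\times 3$ matrix $M$ is non-singular for generic $a_i$ and every admissible $F$. A standard Zariski-closure argument would then promote the infinitesimal statement into the global claim that the equal-length component sitting over the isosceles locus has no other points.

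The hardest part will be handling the arbitrariness of $F$ uniformly: one must rule out pathological center functions for which $\det M$ vanishes identically. I expect this to require exploiting the defining structural properties of a center function (positive homogeneity, $b\leftrightarrow c$ symmetry in the second and third arguments, and non-degeneracy at $a_1=a_2=a_3$) to argue that $\det M$ is a nonzero polynomial in the $a_i$ regardless of the choice of $F$. This is precisely the step that the computer-generated data by itself cannot supply, and presumably why the result is recorded here only as a conjecture.
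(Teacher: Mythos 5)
You should note at the outset that the paper offers no proof of this statement: it appears in Section \ref{section:conjectures} among results that are ``backed up by the data, but I don't have formal proofs,'' so there is nothing to compare your argument against, and what you have written is likewise a plan rather than a proof. The ingredients you do carry out are correct: the Leibniz-type distance formula for $|A_4P_4|^2$ in terms of normalized areal weights is right, and the centroid warm-up is sound --- the three equalities reduce to $a_2^2+a_3^2=b_2^2+b_3^2$ and its cyclic analogues, whose only solution is $a_i=b_i$. That settles the conjecture for the centroid, which is a genuine (if modest) contribution.

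For the general case there are two gaps, one of which you name and one of which you do not. The one you do not name is the local-to-global step. The equal-cevian condition is three polynomial equations in the six edge lengths, so its solution variety is expected to be three-dimensional --- the same dimension as the isosceles locus $b_i=a_i$. Non-singularity of your matrix $M$ along the isosceles locus would show (via the implicit function theorem) only that \emph{near} a generic isosceles tetrahedron the equal-length locus coincides with the isosceles locus; it cannot exclude other three-dimensional components of the equal-length variety lying entirely away from $b_i=a_i$ but still inside the open region of admissible tetrahedra, and a ``Zariski-closure argument'' does not bridge this, since closing up one component says nothing about the existence of others. So even for a single fixed non-centroid center your outline does not yield the conjecture; the centroid case is special precisely because there the equations are linear in the squared edges and the variety visibly has a single component. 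The second gap is the one you acknowledge: proving $\det M\neq 0$ for every admissible $F$. Note that this is more delicate than your sketch suggests, because the weights $\xi_i$ on each face themselves depend on the edge lengths, so $M$ involves derivatives of $F$ and not just its values; one would have to rule out center functions for which $\det M$ vanishes identically, which would break even the local step. Both issues would have to be resolved before this could be promoted from a proposal to a proof.
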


\appendix
\section{Center Functions and Trilinear Coordinates}
\label{section:trilinearCoordinates}

In this appendix, we give the basic information about center functions
and trilinear coordinates that the reader needs to know.

Before giving the definition of a triangle center, let us review the
concept of trilinear coordinates.  If $ABC$ is a fixed reference triangle
in the plane (with sides of lengths $a$, $b$, and $c$), and
if $P$ is an arbitrary point in the plane, then the trilinear
coordinates of $P$ are $(\alpha,\beta,\gamma)$
where $\alpha$, $\beta$, $\gamma$ are the signed distances from $P$
to the sides $BC$, $CA$, $AB$, respectively.
The three coordinates satisfy the condition
\begin{equation}
\label{eq:A1}
a\alpha+b\beta+c\gamma=2K
\end{equation}
where $K$ is the area of $\triangle ABC$.
If $\alpha$, $\beta$, and $\gamma$ are any three real numbers
(with $a\alpha+b\beta+c\gamma\neq 0$), then there is a unique point $P$ in the plane
whose trilinear coordinates are proportional to $\alpha:\beta:\gamma$.
Thus $(\alpha,\beta,\gamma)$ may be considered to be the
trilinear coordinates of $P$ even if condition (\ref{eq:A1}) is not satisfied.
If condition (1) is satisfied, then we refer to the coordinates
as exact trilinear coordinates.

A {\it center function} is a nonzero function $f(a,b,c)$
that is homogeneous in $a$, $b$, and $c$ and symmetric in $b$ and $c$.
In other words, a center function must satisfy the following two conditions
for all $a$, $b$, $c$, $t$, and some integer $r$:
$$f(ta,tb,tc)=t^rf(a,b,c)\leqno(\rm C1)$$
$$f(a,c,b)=f(a,b,c)\leqno(\rm C2)$$
A {\it center} is an ordered triple $\alpha: \beta: \gamma$
given by
$$\alpha=f(a,b,c),\qquad \beta=f(b,c,a),\qquad \gamma=f(c,a,b)$$
for some center function $f(a,b,c)$. See \cite{KimberlingA} for more details.
If $f$ is a polynomial, then the center is called a polynomial center.

If $P=(\alpha, \beta, \gamma)$, then the point
$(\alpha^{-1},\beta^{-1},\gamma^{-1})$ is denoted by $P^{-1}$
and is called the isogonal conjugate of $P$.  For the
geometric meaning of isogonal conjugates, consult \cite{CollegeGeom}.

More information about center functions and trilinear coordinates
can be found in \cite{KimberlingA}, \cite{KimberlingB}, and \cite{KimberlingC}.

\section{Areal Coordinates}
\label{section:arealCoordinates}

In this appendix, we give the basic information about areal coordinates that the reader needs to know.

Let $ABC$ be a fixed reference triangle in the plane.
If $P$ is an arbitrary point in the plane of $\triangle ABC$, then the areal
coordinates of $P$ are given by $(x,y,z)$
where
$$
\begin{aligned}
x&=[PBC]/[ABC],\\
y&=[PCA]/[ABC],\\
z&=[PAB]/[ABC].\\
\end{aligned}
$$
The three areal coordinates satisfy the condition
\begin{equation}
\label{eq:AC1}
x+y+z=1.
\end{equation}
Areal coordinates are also known as barycentric coordinates.

If $x$, $y$, and $z$ are any three real numbers
(with $x+y+z\neq 0$), then there is a unique point $P$ in the plane
whose areal coordinates are proportional to $x:y:z$.
Thus $(x,y,z)$ may be considered to be the
areal coordinates of $P$ even if condition (\ref{eq:AC1}) is not satisfied.
The three coordinates are proportional to the areas formed by
$P$ and the sides of the reference triangle $ABC$.
If condition (\ref{eq:AC1}) is satisfied, then we refer to the coordinates
as exact areal coordinates.

Areal coordinates, $(x,y,z)$,
can be transformed to trilinear coordinates, $(\alpha,\beta,\gamma)$,
and vice versa, by the following formulas:
$$\alpha=x/a,\qquad \beta=y/b,\qquad \gamma=z/c$$
where $a$, $b$, and $c$ are the lengths of the sides of the reference triangle.

If $P=(x, y, z)$, then the point
$(x^{-1},y^{-1},z^{-1})$ is denoted by $P\T$
and is called the isotomic conjugate of $P$.  For the
geometric meaning of isotomic conjugates, consult \cite{CollegeGeom}.
In terms of trilinear coordinates, the isotomic conjugate
of $(\alpha,\beta,\gamma)$ is
$(\alpha^{-1}/a^2,\beta^{-1}/b^2,\gamma^{-1}/c^2)$.

More information about areal coordinates can be found in \cite{Capitan}
and \cite{Grozdev}.

\section{Tetrahedral Coordinates}
\label{section:tetrahedralCoordinates}

In this appendix, we give the basic information about tetrahedral coordinates that the reader needs to know.

The 3-dimensional analog of areal coordinates are tetrahedral
coordinates. Let $[T]$ denote the volume of a tetrahedron $T$.
Let $A_1A_2A_3A_4$ be a fixed reference tetrahedron in space
with edges of lengths $A_2A_3=a_1$, $A_3A_1=a_2$, $A_1A_2=a_3$,
$A_1A_4=b_1$, $A_2A_4=b_2$, and $A_3A_4=b_3$, so that
the edges of lengths $a_i$ and $b_i$ are opposite each other,
$i=1,2,3$.

If $P$ is an arbitrary point in space, then the tetrahedral
coordinates of $P$ are given by $(x_1,x_2,x_3,x_4)$
where
$$
\begin{aligned}
x_1&=[PA_2A_3A_4]/V,\\
x_2&=[PA_1A_3A_4]/V,\\
x_3&=[PA_1A_2A_4]/V,\\
x_4&=[PA_2A_3A_4]/V,\\
\end{aligned}
$$
where $V=[A_1A_2A_3A_4]$ is the volume of the
reference tetrahedron.
The four tetrahedral coordinates satisfy the condition
\begin{equation}
\label{eq:TC1}
x_1+x_2+x_3+x_4=1.
\end{equation}

If $x_i$, $i=1,2,3,4$ are any four real numbers
(with nonzero sum), then there is a unique point $P$ in space
whose tetrahedral coordinates are proportional to $x_1:x_2:x_3:x_4$.
Thus $(x_1,x_2,x_3,x_4)$ may be considered to be the
tetrahedral coordinates of $P$ even if condition (\ref{eq:TC1}) is not satisfied.
The four coordinates are proportional to the volumes formed by
$P$ and the faces of the reference tetrahedron $ABCD$.
If condition (\ref{eq:TC1}) is satisfied, then we refer to the coordinates
as exact tetrahedral coordinates.

For more information about tetrahedral coordinates, see \cite{Carr} and \cite{Frost}.

\section{Formulas}
\label{section:formulas}

\def\formula #1:{\smallskip\noindent \underbar{Formula #1}:}

In this appendix, we collect together formulas about tetrahedral coordinates
that are needed in this paper.
All points are given using exact tetrahedral coordinates.

\bigskip

\textbf{POINTS}

\formula 1:
Coordinates of a point (\cite[p.~65]{Frost}):
$$(x,y,z,w)$$
where $x+y+z+w=1$.
The coordinates are proportional to the volumes of the tetrahedra
formed by the point and the faces of the reference tetrahedron.
If a more symmetric notation is needed, we will use
the alternate form $(X_1,X_2,X_3,X_4)$.

\formula 2:
Condition for 4 points $(x_i,y_i,z_i,w_i)$, $i=1,2,3,4$
to be coplanar:
$$
\begin{vmatrix}
x_1&y_1&z_1&w_1\\
x_2&y_2&z_2&w_2\\
x_3&y_3&z_3&w_3\\
x_4&y_4&z_4&w_4\\
\end{vmatrix}
=0$$
or equivalently,
$$
\begin{vmatrix}
x_2-x_1&y_2-y_1&z_2-z_1\\
x_3-x_1&y_3-y_1&z_3-z_1\\
x_4-x_1&y_4-y_1&z_4-z_1\\
\end{vmatrix}
=0.$$

\formula 3:
Condition for 3 points $(x_i,y_i,z_i,w_i)$, $i=1,2,3$
to be collinear:
$$\frac{x_2-x_1}{x_3-x_1}=\frac{y_2-y_1}{y_3-y_1}=\frac{z_2-z_1}{z_3-z_1}=
\frac{w_2-w_1}{w_3-w_1}.$$

\formula 4:
Square of distance between points $(x_1,y_1,z_1,w_1)$ and $(x_2,y_2,z_2,w_2)$
(\cite[p.~66]{Frost}):
$$
(y_1-y_2)(z_2-z_1)a_1^2+
(z_1-z_2)(x_2-x_1)a_2^2+
(x_1-x_2)(y_2-y_1)a_3^2+
$$
$$
(x_1-x_2)(w_2-w_1)b_1^2+
(y_1-y_2)(w_2-w_1)b_2^2+
(z_1-z_2)(w_2-w_1)b_3^2.
$$
This can be written in the symmetrical form:
$$-\sum_{i,j}d_{i,j}^2(X_i-X'_i)(X_j-X'_j)$$
representing the square of the distance from $(X_1,X_2,X_3,X_4)$ to
$(X'_1,X'_2,X'_3,X'_4)$.  In this symmetrical notation,
the summation symbol
$$\sum_{i,j}\qquad {\rm means}\qquad \sum_{\scriptstyle i,j=1\atop \scriptstyle i<j}^4$$
and $d_{i,j}$ represents the length of edge $A_iA_j$, so that
$$d_{2,3}=a_1,\quad d_{1,3}=a_2,\quad d_{1,2}=a_3,\quad d_{1,4}=b_1,\quad 
d_{2,4}=b_2,\quad d_{3,4}=b_3.$$

\goodbreak
\textbf{LINES}

\formula 5:
General equation of a straight line
(\cite[p.~67]{Frost}):
$$\frac{x-x_0}{K}=\frac{y-y_0}{L}=\frac{z-z_0}{M}=\frac{w-w_0}{N}$$
where $K+L+M+N=0$. The line passes through the point $(x_0,y_0,z_0,w_0)$.
The quadruple, $(K,L,M,N)$, represents the direction of the line
and is called the direction vector.
Two lines are parallel if and only if they have the same direction vector
(or a multiple thereof).
Note that some of $K,L,M,N$ may be 0 because the condition
$(x-x_0)/K=(y-y_0)/L$ is really an abbreviation for
$(x-x_0)L=(y-y_0)K$ which does not involve any possible divisions by 0.

\formula 6: Parametric equation of a straight line:
$$(x_0+Kt,y_0+Lt,z_0+Mt,w_0+Nt).$$
This formula yields all the points on the line through $(x_0,y_0,z_0,w_0)$
with direction vector $(K,L,M,N)$ as $t$ varies
through the real numbers.

\formula 7:
Equation of line through $(x_1,y_1,z_1,w_1)$ and $(x_2,y_2,z_2,w_2)$:
$$
\frac{x-x_1}{x_2-x_1}=\frac{y-y_1}{y_2-y_1}=\frac{z-z_1}{z_2-z_1}=\frac{w-w_1}{w_2-w_1}.
$$

\formula 8:
Coordinates of point that divides the line joining points
$(x_1,y_1,z_1,w_1)$ and $(x_2,y_2,z_2,w_2)$ in the ratio $\mu:\lambda$
(\cite[p.~65]{Frost}):
$$(\frac{\lambda x_1+\mu x_2}{\lambda+\mu},\frac{\lambda y_1+\mu y_2}{\lambda+\mu},
\frac{\lambda z_1+\mu z_2}{\lambda+\mu},\frac{\lambda w_1+\mu w_2}{\lambda+\mu}).$$

\formula 9:
Condition for two lines
$(x-x_i)/ K_i=(y-y_i)/ L_i=(z-z_i)/M_i=(w-w_i)/N_i$, $i=1,2$
to be parallel:
$$K_1:L_1:M_1:N_1=K_2:L_2:M_2:N_2$$
or equivalently
$$\frac{K_1}{K_2}=\frac{L_1}{L_2}=\frac{M_1}{M_2}=\frac{N_1}{N_2}.$$

\formula 10:
Condition for two lines
$(x-x_i)/ K_i=(y-y_i)/ L_i=(z-z_i)/M_i=(w-w_i)/N_i$, $i=1,2$
to be perpendicular
(\cite[p.~68]{Frost}):
$$a_1^2(L_1M_2+L_2M_1)+a_2^2(K_1M_2+K_2M_1)+a_3^2(K_1L_2+K_2L_1)+$$
$$b_1^2(K_1N_2+K_2N_1)+b_2^2(L_1N_2+L_2N_1)+b_3^2(M_1N_2+M_2N_1)=0.$$

\formula 11:
Condition for two lines
$(x-x_i)/ K_i=(y-y_i)/ L_i=(z-z_i)/M_i=(w-w_i)/N_i$, $i=1,2$
to intersect (or be parallel):
$$
\begin{vmatrix}
x_1&y_1&z_1&w_1\\
K_1&L_1&M_1&N_1\\
x_2&y_2&z_2&w_2\\
K_2&L_2&M_2&N_2\\
\end{vmatrix}
=0
$$
or equivalently
$$
\begin{vmatrix}
x_2-x_1&y_2-y_1&z_2-z_1\\
K_1&L_1&M_1\\
K_2&L_2&M_2\\
\end{vmatrix}
=0.
$$

\formula 12: Direction cosines of the line
$(x-x_0)/ K=(y-y_0)/ L=(z-z_0)/M=(w-w_0)/N$:
$$\frac{KF_1}{3V\sigma},\ \frac{LF_2}{3V\sigma},
\ \frac{MF_3}{3V\sigma},\ \frac{NF_4}{3V\sigma},$$
where $V$ is the volume of the reference tetrahedron,
$F_i$ is the area of face $i$ (the face opposite vertex $A_i$),
and $\sigma$ is determined from
$$a_1^2LM+a_2^2MK+a_3^2KL+b_1^2KN+b_2^2LN+b_3^2MN=-\sigma^2.$$
The direction cosines are the cosines of the angles that the line
makes with the normals to the four faces of the reference tetrahedron.
They are proportional to the direction vector.

\goodbreak
\textbf{PLANES}

\formula 13:
General equation of a plane
(\cite[p.~69]{Frost}):
$$Ax+By+Cz+Dw=0$$
where not all coefficients are 0.
The coefficients $A$, $B$, $C$, $D$, are proportional to the directed distances
from the plane to the vertices of the reference tetrahedron.

\formula 14:
Equation of the plane through 3 points, $(x_i,y_i,z_i,w_i)$, $i=1,2,3$
(\cite{Frost}):
$$
\begin{vmatrix}
x&y&z&w\\
x_1&y_1&z_1&w_1\\
x_2&y_2&z_2&w_2\\
x_3&y_3&z_3&w_3\\
\end{vmatrix}
=0.$$

\formula 15:
Condition for planes $A_1x+B_1y+C_1z+D_1w=0$ and $A_2x+B_2y+C_2z+D_2w=0$
to be parallel
(\cite[p.~70]{Frost}):
$$\frac{A_1-D_1}{A_2-D_2}=\frac{B_1-D_1}{B_2-D_2}=\frac{C_1-D_1}{C_2-D_2}.$$

\formula 16:
Equation of the plane through the point $(x_1,y_1,z_1,w_1)$
and the line $(x-x_2)/K=(y-y_2)/L=(z-z_2)/M=(w-w_2)/N$:
$$
\begin{vmatrix}
x&y&z&w\\
x_1&y_1&z_1&w_1\\
x_2&y_2&z_2&w_2\\
K&L&M&N\\
\end{vmatrix}
=0.
$$

\formula 17:
Equation of plane through the line 
$(x-x_1)/K_1=(y-y_1)/L_1=(z-z_1)/M_1=(w-w_1)/N_1$
and parallel to the line with
direction vector $(K_2,L_2,M_2,N_2)$:
$$
\begin{vmatrix}
x&y&z&w\\
x_1&y_1&z_1&w_1\\
K_1&L_1&M_1&N_1\\
K_2&L_2&M_2&N_2\\
\end{vmatrix}
=0.
$$

\formula 18:
Point of intersection of the line
$(x-x_1)/K=(y-y_1)/L=(z-z_1)/M=(w-w_1)/N$
and the plane $Ax+By+Cz+Dw=0$:
$$(x_1-rK,y_1-rL,z_1-rM,w_1-rN)$$
where
$$r=\frac{Ax_1+By_1+Cz_1+Dw_1}{AK+BL+CM+DN}.$$

\formula 19:
Condition for the line
$(x-x_1)/K=(y-y_1)/L=(z-z_1)/M=(w-w_1)/N$
to be parallel to the plane $Ax+By+Cz+Dw=0$:
$$AK+BL+CM+DN=0.$$

\section{Tetrahedron Centers}
\label{section:tetrahedronCenters}

In this appendix, we collect together information about various ``centers'' associated
with a tetrahedron.
We give the tetrahedral coordinates for the more well-known
such centers and explain why some of these centers were not included in our study.

\goodbreak
\textbf{CENTROID}

The centroid, $G$, of a tetrahedron (\cite[p.~54]{Altshiller}) is the center of gravity of unit
masses placed on the vertices. Thus it has barycentric coordinates of
$$G=(1,1,1,1).$$
The exact tetrahedral coordinates are $(1/4,1/4,1/4,1/4)$.
The centroid is also the intersection of the medians of the
tetrahedron (the lines from a vertex to the centroid of the opposite face).

\goodbreak
\textbf{INCENTER}

The incenter, $I$, of a tetrahedron (\cite[p.~76]{Altshiller}) is the center
of the sphere
inscribed in the tetrahedron (touching each of the faces internally).
If we let $r$ be the inradius of the tetrahedron, then the volume of
$IA_2A_3A_4$ is $\frac{1}{3}rF_1$. Similarly for the other three volumes
formed by $I$ and the faces of the tetrahedron.  These four volumes sum
to $V$, the volume of the tetrahedron. Thus
$r=3V/(F_1+F_2+F_3+F_4)$.
The incenter is equidistant from each face of the tetrahedron.  Thus, the tetrahedral
coordinates are
$$I=(F_1,F_2,F_3,F_4).$$
To convert to exact tetrahedral coordinates, each coordinate should
be divided by $F$, the surface area of the tetrahedron.

\goodbreak
\textbf{CIRCUMCENTER}

The circumcenter, $O$, of a tetrahedron (\cite[p.~56]{Altshiller}) is the center of the
circumscribed sphere. If $(O_x,O_y,O_z,O_w)$ are the coordinates
for the circumcenter of our reference tetrahedron, and if $R$
denotes the circumradius, then we 
can set up 4 equations in $O_x$, $O_y$, $O_z$, $O_w$, and $R$:
$$
\begin{aligned}
d(O,A_1)^2&=R^2\\
d(O,A_2)^2&=R^2\\
d(O,A_3)^2&=R^2\\
d(O,A_4)^2&=R^2\\
\end{aligned}
$$
where $d(P_1,P_2)$ denotes the distance between points $P_1$ and $P_2$.
This distance formula is given by formula 4.
Upon subtracting equation 2 from equation 1, equation 3 from equation 2,
and equation 4 from equation 3, we get 3 linear equations
in $O_x$, $O_y$, $O_z$, and $O_w$. Combining these with
$O_x+O_y+O_z+O_w=1$
gives us 4 linear equations in 4 unknowns. These are straightforward
to solve.
The value of $O_x$ found is proportional to
$$O_x=a_1^2b_1^2(b_2^2+b_3^2-a_1^2)+
a_2^2b_2^2(b_3^2+a_1^2-b_2^2)+
a_3^2b_3^2(a_1^2+b_2^2-b_3^2)
-2a_1^2b_2^2b_3^2.$$
The values of $O_y$, $O_z$, and $O_w$ are similar and can be obtained
from $O_x$ by the mappings given in display (5).
Specifically,
$$
\begin{aligned}
O_y&=a_1^2b_1^2(a_2^2+b_3^2-b_1^2)+
a_2^2b_2^2(b_3^2+b_1^2-a_2^2)+
a_3^2b_3^2(b_1^2+a_2^2-b_3^2)
-2b_1^2a_2^2b_3^2,\\
O_z&=a_1^2b_1^2(b_2^2+a_3^2-b_1^2)+
a_2^2b_2^2(a_3^2+b_1^2-b_2^2)+
a_3^2b_3^2(b_1^2+b_2^2-a_3^2)
-2b_1^2b_2^2a_3^2,\\
O_w&=a_1^2b_1^2(a_2^2+a_3^2-a_1^2)+
a_2^2b_2^2(a_3^2+a_1^2-a_2^2)+
a_3^2b_3^2(a_1^2+a_2^2-a_3^2)
-2a_1^2a_2^2a_3^2.\\
\end{aligned}
$$

\goodbreak
\textbf{MONGE POINT}

The Monge point, $M$, of a tetrahedron (\cite[p.~76]{Altshiller})
is the common intersection point of the six planes through
the midpoints of the edges of the tetrahedron and perpendicular
to the opposite edges. The Monge point is the symmetric of the
circumcenter with respect to the centroid (\cite[p.~77]{Altshiller})
and thus its coordinates can be found from them:
$$M=2G-O.$$

\begin{definition}
The three points $G$, $O$, and $M$ lie on a straight line
called the {\it Euler line} of the tetrahedron (\cite[p.~77]{Altshiller}).
\end{definition}

\goodbreak
\textbf{EULER POINT}

The Euler point, $E$, corresponds to the nine-point center in the plane.
It is frequently called the twelve point center (\cite[p.~289]{Altshiller})
because it is the center of a sphere that passes through 12 notable
points in the tetrahedron.
The Euler point lies on the Euler line of the tetrahedron and
divides the segment $MO$ in the ratio $1:2$ and it divides
the segment $GM$ in the ratio $1:2$. Thus its coordinates
can be found from the coordinates of those points:
$$E=(2G+M)/3.$$

\goodbreak
\textbf{ORTHOCENTER}

The altitudes of a tetrahedron do not normally intersect.
They intersect if and only if the tetrahedron is orthocentric and
in that case, the intersection point (the orthocenter)
coincides with the Monge point of the tetrahedron (\cite[p.~71]{Altshiller}).
We do not include the orthocenter of a tetrahedron
as a distinguished point in our study since it is not present
in all tetrahedra.

\goodbreak
\textbf{SYMMEDIAN POINT (Lemoine Point)}

In a tetrahedron, the cevians to the symmedian points on the opposite faces
do not normally intersect.
They intersect if and only if the tetrahedron is isodynamic
(\cite[p.~315]{Altshiller}).
The point of intersection is called the symmedian point of the tetrahedron.
We do not include the symmedian point of a tetrahedron
as a distinguished point in our study since it is not present
in all tetrahedra.

\goodbreak
\textbf{GERGONNE POINT}

In a tetrahedron, the cevians to the Gergonne points on the opposite faces
do not normally intersect.
They intersect if and only if the tetrahedron is circumscriptible
(\cite[p.~299]{Altshiller}).
The point of intersection is called the Gergonne point of the tetrahedron.
We do not include the Gergonne point of a tetrahedron
as a distinguished point in our study since it is not present
in all tetrahedra.

\goodbreak
\textbf{NAGEL POINT}

In a tetrahedron, the cevians to the Nagel points on the opposite faces
do not normally intersect.
They intersect if and only if the tetrahedron is circumscriptible
(\cite[p.~299]{Altshiller}).
The point of intersection is called the Nagel point of the tetrahedron.
We do not include the Nagel point of a tetrahedron
as a distinguished point in our study since it is not present
in all tetrahedra.

\goodbreak
\textbf{FERMAT POINT}

In a tetrahedron, the cevians to the points of tangency of the opposite faces
with the insphere do not normally intersect.
If they intersect, the tetrahedron is called an isogonic tetrahedron
(\cite[p.~328]{Altshiller}).
The point of intersection is called the Fermat point of the tetrahedron.
In this case, the insphere touches the faces at the Fermat point
of each face.
We do not include the Fermat point of a tetrahedron
as a distinguished point in our study since it is not present
in all tetrahedra.

\goodbreak


\begin{thebibliography}{99}

\bibitem{CollegeGeom}
Nathan Altshiller-Court,
\textit{College Geometry},
2nd edition.
Barnes \& Noble, Inc. New York: 1952.

\bibitem{Altshiller}
Nathan Altshiller-Court,
\textit{Modern Pure Solid Geometry},
2nd edition.
Chelsea Publishing Company. The Bronx, NY: 1964.

\bibitem{Capitan}
Francisco Javier Garc\`{\i}a Capit\`an,
\textit{Barycentric Coordinates},
International Journal of Computer Discovered Mathematics, 0(2015)32--48.
\url{http://www.journal-1.eu/2015/01/Francisco-Javier-Barycentric-Coordinates-pp.32-48.pdf}

\bibitem{Carr}
George S. Carr,
\textit{Formulas and Theorems in Pure Mathematics}, 2nd edition.
Chelsea Publishing Company. The Bronx, NY: 1970.

\bibitem{Coolidge}
Julian Lowell Coolidge,
\textit{A Treatise on the Circle and the Sphere}.
Chelsea Publishing Company. The Bronx, NY: 1971.

\bibitem{Dorrie}
Heinrich D\"orrie,
\textit{100 Great Problems of Elementary Mathematics: Their History and Solution}.
Dover Publications, Inc. New York: 1965.

\bibitem{Frost}
Percival Frost,
\textit{Solid Geometry}, vol. 1.
Macmillan and Co. London: 1875.

\bibitem{Gilbert}
Peter Gilbert, personal correspondence.

\bibitem{Grozdev}
Sava Grozdev and Deko Dekov,
\textit{Barycentric Coordinates: Formula Sheet},
International Journal of Computer Discovered Mathematics, 1(2016)75--82.
\url{http://www.journal-1.eu/2016-2/Grozdev-Dekov-Barycentric-Coordinates-pp.75-82.pdf}

\bibitem{KimberlingA}
Clark Kimberling,
\textit{Central Points and Central Lines in the Plane of a Triangle},
Mathematics Magazine, 67(1994)163--187.

\bibitem{KimberlingB}
Clark Kimberling,
\textit{Triangle Centers and Central Triangles},
Congressus Numerantium, 129(1998)1--295.

\bibitem{KimberlingC}
Clark Kimberling,
\textit{Encyclopedia of Triangle Centers}, 2020,
\url{http://faculty.evansville.edu/ck6/encyclopedia/ETC.html}

\bibitem{Lee}
T. R. Lee,
\textit{Solution to Problem 18523} (Proposed by A. M. Nesbitt),
Mathematical Questions and Solutions from the Educational Times
(series 3), 5(1918)46.

\bibitem{Tabov}
Jordan Tabov, personal correspondence.

\bibitem{Towns}
A.~Zaslavsky, \textit{Problem 3}, 15th Summer Conference of the International Mathematical Tournament of Towns. 2003.
\url{https://www.turgor.ru/lktg/2003/treutetr.en/}


\end{thebibliography}
\end{document}